\newcommand{\zed}{\mathbb{Z}}
\newcommand{\Q}{\mathbb{Q}}
\newcommand{\fH}{\mathcal{H}}
\newcommand{\fC}{\mathcal{C}}
\newcommand{\Hom}{\mathrm{Hom}}
\newcommand{\im}{\mathrm{Im}}
\newcommand{\ve}{\varepsilon}
\newcommand{\id}{\mathrm{id}}
\newcommand{\mf}{\mathrm{mf}}
\newcommand{\hmf}{\mathrm{hmf}}
\newcommand{\slmf}{\mathfrak{sl}}
\newcommand{\coker}{\mathrm{coker}}
\theoremstyle{plain}
\newtheorem{theorem}{Theorem}[section]
\newtheorem{lemma}[theorem]{Lemma}
\newtheorem{proposition}[theorem]{Proposition}
\newtheorem{corollary}[theorem]{Corollary}
\theoremstyle{definition}
\newtheorem{definition}[theorem]{Definition}
\theoremstyle{remark}
\newtheorem{remark}[theorem]{Remark}
\numberwithin{equation}{section}
\begin{document}

\title[Transverse Khovanov-Rozansky Homologies]{On the Transverse Khovanov-Rozansky Homologies: \\ Graded Module Structure and Stabilization}

\author{Hao Wu}

\thanks{The author was partially supported by NSF grant DMS-1205879.}

\address{Department of Mathematics, The George Washington University, Monroe Hall, Room 240, 2115 G Street, NW, Washington DC 20052, USA. Telephone: 1-202-994-0653, Fax: 1-202-994-6760}

\email{haowu@gwu.edu}

\subjclass[2010]{Primary 57M25, 57R17}

\keywords{transverse link, Khovanov-Rozansky homology, HOMFLYPT polynomial} 

\begin{abstract}
In \cite{Wu-triple-trans}, the author proved that the Khovanov-Rozansky homology $\mathcal{H}_N$ with potential $ax^{N+1}$ is an invariant for transverse links in the standard contact $3$-sphere. In the current paper, we study the $\zed_2\oplus\zed^{\oplus 3}$-graded $\Q[a]$-module structure of $\mathcal{H}_N$, which leads to better understanding of the effect of stabilization on $\mathcal{H}_N$. As an application, we compute $\mathcal{H}_N$ for all transverse unknots.
\end{abstract}

\maketitle

\section{Introduction}\label{sec-intro}

\subsection{The transverse Khovanov-Rozansky homology $\mathcal{H}_N$}

A contact structure $\xi$ on an oriented $3$-manifold $M$ is an oriented tangent plane distribution such that there is a $1$-form $\alpha$ on $M$ satisfying $\xi=\ker\alpha$, $d\alpha|_{\xi}>0$ and $\alpha\wedge d\alpha>0$. Such a $1$-form is called a contact form for $\xi$. The standard contact structure $\xi_{st}$ on $S^3$ is given by the contact form $\alpha_{st} = dz-ydx+xdy=dz+r^2d\theta$. 

We say that an oriented smooth link $L$ in $S^3$ is transverse if $\alpha_{st}|_L>0$. Two transverse links are said to be transverse isotopic if there is an isotopy from one to the other through transverse links.

\begin{theorem}\cite{Ben,OSh,Wr}\label{transversal-markov}
\begin{enumerate}
	\item Every transverse link is transverse isotopic to a counterclockwise transverse closed braid around the $z$-axis.
	\item Any smooth counterclockwise closed braid around the $z$-axis can be smoothly isotoped into a counterclockwise transverse closed braid around the $z$-axis without changing the braid word.
	\item Two counterclockwise transverse closed braids around the $z$-axis are transverse isotopic if and only if the braid word of one of them can be changed into that of the other by a finite sequence of transverse Markov moves. Here, by ``transverse Markov moves", we mean the following braid moves:
	\begin{itemize}
    \item Braid group relations generated by
    \begin{itemize}
	  \item $\sigma_i\sigma_i^{-1}=\sigma_i^{-1}\sigma_i=\emptyset$,
	  \item $\sigma_i\sigma_j=\sigma_j\sigma_i$, when $|i-j|>1$,
	  \item $\sigma_i\sigma_{i+1}\sigma_i=\sigma_{i+1}\sigma_i\sigma_{i+1}$.
    \end{itemize}
    \item Conjugation: $\mu\leftrightsquigarrow\eta^{-1}\mu\eta$,
    where $\mu,~\eta\in \mathbf{B}_m$.\footnote{In this paper, ``$\mathbf{B}_m$" means the braid group on $m$ strands.}
    \item Positive stabilization and destabilization: $\mu~(\in \mathbf{B}_m)\leftrightsquigarrow \mu\sigma_m~(\in \mathbf{B}_{m+1})$.
  \end{itemize}
	In other words, all Markov moves are transverse Markov moves except the negative stabilization and destabilization $\mu~(\in \mathbf{B}_m)\leftrightsquigarrow \mu\sigma_m^{-1}~(\in \mathbf{B}_{m+1})$.
\end{enumerate}
\end{theorem}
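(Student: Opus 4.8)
The plan is to prove the three parts separately, invoking the braiding theorem of Bennequin for part (1), an elementary rescaling argument for part (2), and the contact-geometric refinement of the Markov theorem due to Orevkov--Shevchishin and Wrinkle for part (3).

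For part (1), consider the fibration $\theta\colon S^3\setminus(z\text{-axis}\cup\{\infty\})\to S^1$ whose fibers are the half-planes $H_c=\{\theta=c\}$; a link is a counterclockwise closed braid around the $z$-axis precisely when it is disjoint from the $z$-axis and positively transverse to every $H_c$. Starting from a transverse link $L$, I would first use a transverse isotopy to push $L$ off the $z$-axis and $\infty$ (both of codimension $2$), and then remove the tangencies of $L$ with the pages. The key observation is that at a point where $\dot\gamma\in TH_c$ one has $\dot\theta=0$, so the transversality condition $\dot z+r^2\dot\theta>0$ degenerates to $\dot z>0$: the link is moving upward wherever it is tangent to a page. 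This positivity is exactly what is needed to run Alexander's braiding procedure — subdividing the ``wrong-way'' arcs and throwing them across the $z$-axis — while keeping the contact condition satisfied, so that after finitely many transverse isotopies $L$ becomes a counterclockwise transverse closed braid. This is Bennequin's theorem.

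For part (2), write each strand of a smooth counterclockwise closed braid as $\theta\mapsto(r(\theta),\theta,z(\theta))$ with $r>0$ and, after reparametrizing, $\dot\theta\equiv 1$; then the transversality condition reads $z'(\theta)+r(\theta)^2>0$. Rescaling $r\mapsto cr$ with $c$ large is an isotopy of closed braids that does not change the braid word and makes $c^2 r(\theta)^2$ dominate the bounded function $z'(\theta)$, producing the required transverse closed braid. For part (3), one inclusion is a finite check that each listed move is a transverse isotopy: the braid-group relations and conjugation give ambient-isotopic closed braids, and one verifies that the ambient isotopy can be taken transverse — equivalently, that all closed-braid representatives of a fixed word in $\mathbf{B}_m$ lie in a single transverse-isotopy class. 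Positive stabilization $\mu\leftrightsquigarrow\mu\sigma_m$ is the move where positivity is essential: the new strand is modeled on a small positively twisting spiral that can be spliced in through a transverse isotopy, whereas a negative stabilization would create a page tangency with $\dot z<0$, impossible for a transverse link — this is precisely why negative (de)stabilization is excluded.

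The converse in part (3) — that any transverse isotopy between two transverse closed braids factors through transverse Markov moves — is the genuinely hard part and the content of the Orevkov--Shevchishin / Wrinkle theorem. The plan here is the Birman--Menasco braid-foliation strategy: the isotopy sweeps out an embedded cobordism in $S^3$ between the two braids; put this cobordism in general position with respect to $\theta$, so that the induced braid foliation has only the standard elliptic and hyperbolic singularities; then simplify the foliation by local modifications, each of which is realized on the boundary braids by a braid relation, a conjugation, or a (de)stabilization. The contact-geometric input — transversality of the two end braids to $\xi_{st}$, together with a Bennequin-type self-linking/Euler-characteristic bound along the cobordism — forbids exactly the hyperbolic singularities that would force negative stabilizations, leaving only positive ones. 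I expect this last step to be the main obstacle: making the general-position and singularity-cancellation arguments rigorous, and tracking the signs so that only positive stabilizations survive, is the technical heart of the proof, for which I would refer to \cite{Ben,OSh,Wr}.
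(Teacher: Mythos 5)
The paper does not prove this theorem at all: it states it with citations to \cite{Ben,OSh,Wr} and adds the one-sentence attribution ``Part (1) of Theorem \ref{transversal-markov} was established by Bennequin in \cite{Ben}, part (2) is a simple observation and part (3) was proved by Orevkov, Shevchishin in \cite{OSh} and independently by Wrinkle in \cite{Wr}.'' Your proposal, which outlines the braiding argument, the rescaling observation, and the braid-foliation strategy before deferring the hard singularity analysis to the references, is therefore consistent in spirit with the paper: neither supplies a self-contained proof, and both lean on the same external sources. That said, your sketch contains substantially more content than the paper's attribution, and what you sketch is broadly correct.

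Two small corrections are worth flagging. First, your stated reason for excluding negative stabilization --- that ``a negative stabilization would create a page tangency with $\dot z<0$, impossible for a transverse link'' --- conflates two different issues. The closed braid $\mu\sigma_m^{-1}$ is perfectly realizable as a transverse closed braid (part (2) of the theorem applies to it just as to any other braid word); what fails is that there is no transverse isotopy from $\overline{\mu}$ to $\overline{\mu\sigma_m^{-1}}$. The cleanest obstruction is the one the paper itself highlights right after the theorem: the self-linking number $sl(B)=w-m$ is a transverse invariant, it is preserved by positive stabilization ($w\mapsto w+1$, $m\mapsto m+1$), and it drops by $2$ under negative stabilization, so negative (de)stabilization can never be a transverse isotopy. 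Your page-tangency heuristic should be replaced or supplemented by this invariance argument. Second, it is an oversimplification to credit the Birman--Menasco braid-foliation machinery to both \cite{OSh} and \cite{Wr}: Wrinkle's proof does follow that route, while Orevkov and Shevchishin use a different, more analytic approach. Since you are deferring to the references in any case, this is a bibliographic rather than a mathematical issue.
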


Part (1) of Theorem \ref{transversal-markov} was established by Bennequin in \cite{Ben}, part (2) is a simple observation and part (3) was proved by Orevkov, Shevchishin in \cite{OSh} and independently by Wrinkle in \cite{Wr}. Theorem \ref{transversal-markov} means that there is a one-to-one correspondence 
\[
\{\text{Transverse isotopy classes of transverse links}\} \longleftrightarrow \{\text{Closed braids modulo transverse Markov moves}\}.
\] 
So, constructing invariants for transverse links is equivalent to constructing invariants for equivalence classes of closed braids modulo transverse Markov moves. For example, for a closed braid $B$ with writhe $w$ of $m$ strands, its self linking number $sl(B)=w-m$ is invariant under transverse Markov moves. So the self linking number is a transverse link invariant. See \cite{Ben} for the original definition of the self linking number. 

For more about transverse links, see, for example, \cite{Etnyre-contact-notes-1}.

Using the above correspondence, the author introduced in \cite{Wu-triple-trans} a new homological invariant $\fH_N$ for transverse links. $\fH_N$ is a variant of the Khovanov-Rozansky homology defined in \cite{KR1,KR2}. We call $\fH_N$ the $N$th transverse Khovanov-Rozansky homology. The following is the main result of \cite{Wu-triple-trans}.

\begin{theorem}\cite[Theorem 1.2]{Wu-triple-trans}\label{thm-trans-link-homology}
Suppose $N\geq 1$. Let $B$ be a closed braid and $\fC_N(B)$ the chain complex defined in Definition \ref{def-chain-tangle}. Then the homotopy type of $\fC_N(B)$ does not change under transverse Markov moves. Moreover, the homotopy equivalences induced by transverse Markov moves preserve the $\zed_2\oplus \zed^{\oplus 3}$-grading of $\fC_N(B)$, where the $\zed_2$-grading is the $\zed_2$-grading of the underlying matrix factorization and the three $\zed$-gradings are the homological, $a$- and $x$-gradings of $\fC_N(B)$.

Consequently, for the homology $\fH_N(B)=H(H(\fC_N(B),d_{mf}),d_\chi)$ of $\fC_N(B)$ defined in Definition \ref{def-homology-link}, every transverse Markov move on $B$ induces an isomorphism of $\fH_N(B)$ preserving the $\zed_2\oplus \zed^{\oplus 3}$-graded $\Q[a]$-module structure of $\fH_N(B)$.
\end{theorem}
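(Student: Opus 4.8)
The plan is to reduce everything, via Theorem~\ref{transversal-markov}(3), to a single move of each type. Two closed braids represent transverse isotopic links exactly when their braid words are connected by a finite sequence of transverse Markov moves, so it suffices to produce, for each move $B\leftrightsquigarrow B'$, a homotopy equivalence $\fC_N(B)\simeq\fC_N(B')$ of complexes of matrix factorizations and to check that it respects the $\zed_2\oplus\zed^{\oplus 3}$-grading; the general statement follows by composing these, and the assertion about $\fH_N(B)=H(H(\fC_N(B),d_{mf}),d_\chi)$ is then purely formal, since a grading-preserving $\Q[a]$-linear homotopy equivalence of complexes of matrix factorizations induces a grading-preserving $\Q[a]$-linear isomorphism on the iterated homology. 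The guiding principle is to build each equivalence from \emph{explicit} local morphisms of matrix factorizations: each such morphism carries a well-defined $\zed_2$-degree together with $a$- and $x$-degrees, and the cube-of-resolutions construction places each summand in a fixed homological degree, so keeping track of the shifts that accompany the local morphisms makes the grading-preservation automatic rather than something to be checked separately.

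For the braid-group relations the work is local and is essentially that of \cite{KR1,KR2}. Far commutativity $\sigma_i\sigma_j=\sigma_j\sigma_i$ with $|i-j|>1$ is immediate, since the two complexes are built by tensoring over disjoint sets of variables. The relation $\sigma_i\sigma_i^{-1}=\emptyset$ (a Reidemeister~II--type move) and the braid relation $\sigma_i\sigma_{i+1}\sigma_i=\sigma_{i+1}\sigma_i\sigma_{i+1}$ (a Reidemeister~III--type move) are handled by decomposing the matrix factorizations attached to the singular resolutions via the MOY-type direct-sum relations, and then running Gaussian elimination on the resulting complexes to split off the contractible pieces. The one point that needs attention beyond the undeformed $\mathfrak{sl}_N$ case is that these decompositions must hold over the ground ring $\Q[a]$ rather than over $\Q$; but $ax^{N+1}$ is a homogeneous deformation of $x^{N+1}$ in which $a$ is simply an extra formal parameter, so the underlying Koszul-complex computations go through verbatim and the shifts are exactly those recorded in \cite{Wu-triple-trans}. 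Conjugation $\mu\leftrightsquigarrow\eta^{-1}\mu\eta$ is then a formal consequence of the cyclic symmetry of the braid-closure operation on complexes of matrix factorizations --- equivalently, it reduces to the Reidemeister~II--type move already treated, by sliding $\eta^{\pm1}$ around the closure arc --- and in either guise it preserves the gradings for the same reasons.

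This leaves positive stabilization $\mu~(\in\mathbf{B}_m)\leftrightsquigarrow\mu\sigma_m~(\in\mathbf{B}_{m+1})$, which I expect to be the main obstacle. Here one must compute the complex of a positive kink on a single strand. It is a two-term complex: the oriented resolution returns the original closed braid together with a disjoint circle, whose matrix factorization is dealt with by the MOY relation evaluating a disjoint circle; and the singular resolution is a wide edge with one of its edges closed into a small loop, which the corresponding MOY decomposition rewrites as a direct sum of grading-shifted copies of the original complex. Gaussian elimination then cancels all of these copies against the oriented-resolution term except for a single copy of $\fC_N(\mu)$, and --- this is the decisive arithmetic --- the net homological, $a$- and $x$-shifts cancel precisely, so nothing survives; for a \emph{negative} stabilization the same computation would leave a nontrivial, unremovable grading shift, which is the source of the asymmetry between the two stabilizations. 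The genuine difficulty is the simultaneous bookkeeping of all four gradings through this curl computation, together with the verification that the homotopy equivalence is honest over $\Q[a]$ and does not implicitly require inverting $a$ --- a point one settles by exhibiting the relevant split morphisms explicitly, rather than only up to isomorphism, and confirming that the differential components being inverted in the Gaussian elimination are rational units times the identity.
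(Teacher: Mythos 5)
The paper does not actually reprove Theorem~\ref{thm-trans-link-homology}; it is cited from \cite{Wu-triple-trans}, and within this paper the only gesture toward a proof is the remark that it follows from Lemma~\ref{lemma-chain-inv} and Corollary~\ref{cor-homology-inv}, which are themselves imported from \cite{Wu-triple-trans}. So your proposal has to be judged against the strategy of that reference, which Section~2 of this paper summarizes. Your overall plan (reduce to single transverse Markov moves via Theorem~\ref{transversal-markov}(3), treat each move by local MOY-type decompositions plus Gaussian elimination, deduce the statement for $\fH_N$ formally) is the right one and matches the reference's structure.

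The genuine gap is in the sentence ``the underlying Koszul-complex computations go through verbatim and the shifts are exactly those recorded in \cite{Wu-triple-trans}.'' They do not go through verbatim, and positive stabilization is exactly where they fail. Over $\Q[a]$ the matrix factorization of a disjoint circle is $\Q[a,x]\xrightarrow{(N+1)ax^N}\Q[a,x]\{-1,1-N\}\xrightarrow{0}\Q[a,x]$, whose homology $\Q[a,x]/(ax^N)$ is not a finitely generated $\Q[a]$-module (this is visible in Lemma~\ref{lemma-homology-empty-braid}, where $\mathcal{M}_\infty$ appears). Consequently neither the circle nor the bubble (the wide edge with a closed loop) decomposes into a \emph{finite} direct sum of shifted copies of the trivial factorization, unlike the undeformed $\mathfrak{sl}_N$ case. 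So your description of the curl computation --- ``the singular resolution\ldots rewrites as a direct sum of grading-shifted copies of the original complex; Gaussian elimination then cancels all of these copies against the oriented-resolution term except for a single copy'' --- cannot be carried out as stated.

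What \cite{Wu-triple-trans} actually proves (and what appears here as equation~\eqref{eq-MOY-I} in Lemma~\ref{lemma-MOY-decomps}) is a \emph{relative} decomposition: one does not split the circle or the bubble separately, but rather shows
\[
\fC_N(\bigcirc\sqcup\,|\,)\ \simeq\ \fC_N(\text{bubble-on-strand})\{0,1\}\ \oplus\ \fC_N(\,|\,)\langle1\rangle\{-1,1-N\}.
\]
For a positive kink the oriented resolution is $\bigcirc\sqcup\,|$, the singular resolution is the bubble-on-strand, and one decomposes the \emph{oriented} term via the relation above, cancelling its bubble summand against the singular resolution by Gaussian elimination and leaving $\fC_N(\,|\,)$. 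This is a different decomposition than you invoke, with the cancellation running in the opposite direction, and it is the essential new technical input of the deformed theory. Your proposal correctly flags that stabilization is the hard step and correctly demands that the eliminated differential components be honest units over $\Q[a]$, but it is missing the actual deformed form of MOY--I that makes the computation close.
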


\subsection{Module structure of $\fH_N(B)$} The first part of the current paper is a more careful study of the $\zed_2 \oplus \zed^{\oplus 3}$-graded $\Q[a]$-module structure of $\fH_N(B)$, which refines \cite[Theorem 1.11]{Wu-triple-trans} and leads to Theorem \ref{thm-fH-module} below. 

Before stating Theorem \ref{thm-fH-module}, we introduce the following notations.

\begin{definition}\label{def-component-notations}
Let $B$ be a closed braid. For $(\ve,i,j,k) \in \zed_2 \oplus \zed^{\oplus 3}$, denote by $\fH_N^{\ve,i,j,k}(B)$ the subspace of $\fH_N(B)$ of homogeneous elements of $\zed_2$-degree $\ve$, homological degree $i$, $a$-degree $j$ and $x$-degree $k$. Replacing one of these indices by a ``$\star$" means direct summing over all possible values of this index. For example:
\begin{eqnarray*}
\fH_N^{\ve,i,\star,k}(B) & = & \bigoplus_{j\in\zed} \fH_N^{\ve,i,j,k}(B), \\
\fH_N^{\ve,i,\star,\star} (B) & = & \bigoplus_{(j,k)\in\zed^{\oplus 2}} \fH_N^{\ve,i,j,k}(B).
\end{eqnarray*}

Similarly, for the $\slmf(N)$ Khovanov-Rozansky homology $H_N(B)$ defined in \cite{KR1}\footnote{See Subsection \ref{subsec-KR-sl-N} for our normalization of $H_N(B)$.}, we denote by $H_N^{\ve,i,k}(B)$ the subspace of $H_N(B)$ of homogeneous elements of $\zed_2$-degree $\ve$, homological degree $i$ and $x$-degree $k$. Again, Replacing one of these indices by a ``$\star$" means direct summing over all possible values of this index.
\end{definition}

\begin{theorem}\label{thm-fH-module}
Let $B$ be a closed braid, and $(\ve,i,k) \in\zed_2\oplus \zed^{\oplus2}$. As a $\zed$-graded $\Q[a]$-module, 
\[
\fH_N^{\ve,i,\star,k}(B) \cong (\Q[a]\{sl(B)\}_a)^{\oplus l} \oplus (\Q[a]\{sl(B)+2\}_a)^{\oplus (\dim_\Q H_N^{\ve,i,k}(B) - l)} \oplus (\bigoplus_{q=1}^{n} \Q[a]/(a)\{s_q\}),
\] 
where
	\begin{itemize}
	  \item $\{s\}_a$ means shifting the $a$-grading by $s$,
	  \item $l$ and $n$ are finite non-negative integers determined by $B$ and the triple $(\ve,i,k)$,
	  \item $\{s_1,\dots,s_n\} \subset \zed$ is a sequence determined up to permutation by $B$ and the triple $(\ve,i,k)$,
		\item $sl(B) \leq s_q \leq c_+-c_--1$ and $(N-1)s_q \leq k -2N+ 2c_-$ for $1 \leq q \leq n$, where $c_\pm$ is the number of $\pm$ crossings in $B$.
	\end{itemize}
\end{theorem}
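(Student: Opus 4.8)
The plan is to reduce the statement to the structure theory of finitely generated graded modules over the graded principal ideal domain $\Q[a]$, and then to pin down the two quantitative inputs — the free rank and the $a$-grading bounds — by comparison with the $\slmf(N)$ homology $H_N(B)$ and by a careful bookkeeping of the grading shifts built into $\fC_N(B)$ in Definition \ref{def-chain-tangle}. Since $d_{mf}$ and $d_\chi$ preserve the $\zed_2\oplus\zed^{\oplus3}$-grading (Theorem \ref{thm-trans-link-homology}), I fix a triple $(\ve,i,k)$ once and for all; then $\fH_N^{\ve,i,\star,k}(B)$ is a $\zed$-graded $\Q[a]$-module, the $\zed$ being the $a$-grading. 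It is finitely generated: since $a$ has $x$-degree $0$, the $x$-degree-$k$ part of each chain group of $\fC_N(B)$ is a finitely generated free $\Q[a]$-module, so the $x$-degree-$k$ part of $\big(H(\fC_N(B),d_{mf}),d_\chi\big)$ is a bounded complex of finitely generated free $\Q[a]$-modules, whose homology is again finitely generated. Hence $\fH_N^{\ve,i,\star,k}(B)\cong\bigoplus_p\Q[a]\{t_p\}_a\ \oplus\ \bigoplus_q\Q[a]/(a^{m_q})\{s_q\}$, and it remains to show: (i) the free rank equals $\dim_\Q H_N^{\ve,i,k}(B)$; (ii) every $t_p$ lies in $\{sl(B),sl(B)+2\}$; (iii) every $m_q$ equals $1$; (iv) $sl(B)\le s_q\le c_+-c_--1$ and $(N-1)s_q\le k-2N+2c_-$.

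For (i) I would specialize $a=1$. By construction, reducing $\fC_N(B)$ modulo $(a-1)$ turns the potential $ax^{N+1}$ into $x^{N+1}$ and gives back the $\slmf(N)$ Khovanov--Rozansky chain complex of \cite{KR1} in the normalization of Subsection \ref{subsec-KR-sl-N}. Because $(a-1)$ is coprime to $(a)$, the relevant $\mathrm{Tor}_1^{\Q[a]}(-,\Q[a]/(a-1))$ groups vanish on the $d_{mf}$- and $d_\chi$-homologies (each a direct sum of free and $a$-power-torsion modules), so both homology operations commute with this base change and $\fH_N^{\ve,i,\star,k}(B)\otimes_{\Q[a]}\Q[a]/(a-1)\cong H_N^{\ve,i,k}(B)$. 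Since $\Q[a]/(a^m)\otimes_{\Q[a]}\Q[a]/(a-1)=0$, the left-hand side is precisely the free part of $\fH_N^{\ve,i,\star,k}(B)$ reduced modulo $(a-1)$, whose $\Q$-dimension is the free rank; this proves (i). (Equivalently one may invert $a$, adjoin an $(N+1)$st root of $a$, and rescale $x$ to identify $\fH_N(B)$ with $H_N(B)$ over the resulting field, in the spirit of \cite{Wu-triple-trans}.)

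The $a$-grading statements (ii) and (iv) I would obtain from the shifts of Definition \ref{def-chain-tangle} together with an ``eventual invertibility of $a$''. The shifts are arranged so that the minimal $a$-degree occurring anywhere in $\fC_N(B)$ is $c_+-c_--m=sl(B)$; hence $\fH_N^{\ve,i,j,k}(B)=0$ for $j<sl(B)$, giving the lower bounds $t_p\ge sl(B)$ and $s_q\ge sl(B)$. Next I would prove that multiplication by $a$ is an isomorphism $\fH_N^{\ve,i,j,k}(B)\to\fH_N^{\ve,i,j+1,k}(B)$ for all sufficiently large $j$ — concretely, once $j>c_+-c_--1$: truncating $\fC_N(B)$ in high $a$-degree, its $d_{mf}$-homology there is just the continuation of the free ``$a$-tower'', and — keeping $a$ as a variable in the $a=1$ identification above — it agrees with $H_N(B)\otimes_\Q\Q[a,a^{-1}]$, so that no torsion and no new free generator can occur in $a$-degrees above $c_+-c_--1$; this already forces $s_q\le c_+-c_--1$. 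For the sharper claim (ii) that the free generators sit only in $a$-degrees $sl(B)$ and $sl(B)+2$, I would analyse the bottom three $a$-degrees $sl(B),\,sl(B)+1,\,sl(B)+2$ of $\big(H(\fC_N(B),d_{mf}),d_\chi\big)$ by hand, producing a set of cycles of $a$-degree $sl(B)$ (built from the minimal-$a$-degree generators of the MOY resolutions, among them the one underlying the transverse invariant) whose classes, together with finitely many classes of $a$-degree $sl(B)+2$, generate $\fH_N^{\ve,i,\star,k}(B)$ modulo elements of $a$-degree $\ge sl(B)+2$; here the precise shape of the positive-crossing resolution and the normalization of Definition \ref{def-chain-tangle} are what force the ``$+2$''. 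Finally, for the second inequality in (iv): every torsion summand's generator is an $a$-torsion class, hence is detected by the $a$-Bockstein of the short exact sequence $0\to\fC_N(B)\xrightarrow{a}\fC_N(B)\to\fC_N(B)/(a)\to 0$ (with the appropriate $a$-grading shift) inside the ``$a=0$'' homology $H\big(H(\fC_N(B)/(a),d_{mf}),d_\chi\big)$, whose $(\ve,i,\star,k)$-graded piece vanishes unless $(N-1)\cdot(a\text{-degree})\le k-2N+2c_-$ — again a direct consequence of the $a$- and $x$-shifts attached to the wide-edge resolutions in Definition \ref{def-chain-tangle}.

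It remains to handle (iii), the reduced-torsion statement $m_q=1$; this either follows from \cite[Theorem 1.11]{Wu-triple-trans}, which the present theorem refines, or can be established by Gaussian elimination over $\Q[a]$: passing to a minimal model of $\big(H(\fC_N(B),d_{mf}),d_\chi\big)$ and checking that the differentials of the minimal complex have elementary divisors only among $\{1,a\}$, which reduces to the low-$a$-degree analysis of (ii) together with the high-degree stabilization. I expect the main obstacle to be exactly that low-$a$-degree analysis: the formal homological algebra over $\Q[a]$ and the $a=1$ comparison are routine, but pinning the free generators to the two $a$-degrees $sl(B)$ and $sl(B)+2$ — and, with it, excluding the elementary divisor $a^2$ — needs a genuinely explicit description of the bottom few $a$-degrees of $\big(H(\fC_N(B),d_{mf}),d_\chi\big)$, which is where the combinatorics of the crossing resolutions really enters.
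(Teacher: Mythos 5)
Your high-level plan identifies the right pieces --- finite generation, reduction mod $(a-1)$, tracking the $a$-grading shifts, the structure theory over $\Q[a]$ --- and your argument for the free rank (i) via $\mathrm{Tor}$-vanishing under the base change $-\otimes_{\Q[a]}\Q[a]/(a-1)$ is sound and essentially parallel to the paper's Lemma \ref{lemma-free-homology}. But the proposal has a genuine gap precisely at what you yourself flag as ``the main obstacle'': you never produce the mechanism that pins the free generators to $a$-degrees $sl(B)$ and $sl(B)+2$ and forces every torsion module to be $\Q[a]/(a)$ (not $\Q[a]/(a^m)$ for $m>1$). Saying ``analyse the bottom three $a$-degrees by hand'' and ``produce cycles whose classes generate modulo higher $a$-degree'' is a restatement of the goal, not an argument; nothing in your outline rules out a free summand in $a$-degree $sl(B)+4$ or a $\Q[a]/(a^2)$ summand. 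Your treatment of the torsion bounds in (iv) via an ``$a$-Bockstein'' likewise just asserts the needed vanishing of $H(\fC_N(B)/(a))$ outside a certain region without proving it.

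The missing ingredient is the analogue of Lemma \ref{lemma-module-homology-resolved-braid}: for every closed resolved braid $\overline{\mu}$ appearing as a resolution of $B$, $\fH_N^{\ve,\star,k}(\overline{\mu})$ splits as $H_N^{\ve,k}(\overline{\mu})\otimes_\Q\Q[a]\{-b\}_a$ plus torsion that is a finite direct sum of $\Q[a]/(a)\{s\}_a$ with $-b\le s\le -1$ and $(N-1)s\le k-2N+m$. The paper proves this by induction on the weight of the resolved braid, using the MOY decompositions (Lemma \ref{lemma-MOY-decomps}) and the reduction scheme from \cite{Wu5} (Lemma \ref{lemma-resolved-braid-I-1}, Corollary \ref{cor-resolved-braid-induction}). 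This is what says the free part $F^{\ve,i,\star,k}$ of $H(\fC_N(B),d_{mf})$ is concentrated in the \emph{single} $a$-degree $sl(B)$ (much stronger than the minimal-degree bound you cite) and that $T^{\ve,i,\star,k}$ is killed by $a$. Once you have that, the paper's route is short: decompose $d_\chi$ triangularly with respect to $F\oplus T$, get the long exact sequence of Lemma \ref{lemma-FT-exact-seq}, identify $H^i(F^{\ve,\star,\star,k})$ with $H_N^{\ve,i,k}(B)\otimes_\Q\Q[a]\{sl(B)\}_a$ (Lemma \ref{lemma-free-homology}), and then the injectivity of $F\fH_N\hookrightarrow H_N\otimes\Q[a]\{sl(B)\}_a$ together with the fact that $H^{i+1}(T)$ is $(a)$-torsion yields $t_p\in\{sl(B),sl(B)+2\}$ by a short contradiction argument, while the $s_q$-bounds are transported from Lemma \ref{lemma-module-homology-resolved-braid} through the decomposition \eqref{eq-chain-mf-decomp}. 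Without the resolved-braid lemma (or an equally concrete substitute), your steps (ii), (iii) and the inequalities in (iv) remain unproved.
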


\begin{remark}\label{remark-torsion-s-1}
Note that $sl(B)$ and the number of components of $B$ have the same parity. So, from \cite{KR1}, we know that $H_N^{sl(B)-1,i,k}(B) \cong 0$ and, by Theorem \ref{thm-fH-module}, $\fH_N^{sl(B)-1,i,\star,k}(B)$ is a torsion $\Q[a]$-module.
\end{remark}

\subsection{Stabilization} Applying a negative stabilization to a transverse closed braid $B$, we get a new transverse closed braid $B_-$. In contact geometry, this procedure is called a stabilization of the transverse link. In \cite[Theorem 1.5]{Wu-triple-trans}, the author established that the chain complex $\fC_N(B_-)$ is isomorphic to $cone(\pi_0)\{-2,0\}$, where 
\begin{itemize}
	\item $\pi_0: \fC_N(B) \rightarrow \fC_N(B)/a\fC_N(B)$ is the standard quotient map,
	\item $cone(\pi_0)$ is the mapping cone of $\pi_0$,
	\item $\{j,k\}$ means shifting the $a$-grading by $j$ and the $x$-grading by $k$.
\end{itemize}
Therefore, there is a long exact sequence
{\footnotesize
\[
\cdots \rightarrow \fH_N^{\ve,i-1,\star,\star}(B)\{-2,0\} \xrightarrow{\pi_0} \mathscr{H}_N^{\ve,i-1,\star,\star}(B)\{-2,0\} \rightarrow \fH_N^{\ve,i,\star,\star}(B_-) \rightarrow \fH_N^{\ve,i,\star,\star}(B)\{-2,0\} \xrightarrow{\pi_0} \mathscr{H}_N^{\ve,i,\star,\star}(B)\{-2,0\} \rightarrow \cdots
\]}

\noindent preserving the $a$- and $x$-gradings, where $\mathscr{H}_N(B) := H(H(\fC_N(B)/a\fC_N(B),d_{mf}),d_\chi)$. 

Generally, it is not very easy to compute $\mathscr{H}_N(B)$ even if $\fH_N(B)$ is known. So the above long exact sequence is not very useful when computing the homology of a stabilization of a transverse link. Using Theorem \ref{thm-fH-module}, we will take a closer look at the chain complex $\fC_N(B_-)\cong cone(\pi_0)\{-2,0\}$ and deduce Theorem \ref{thm-stabilization} below.

\begin{theorem}\label{thm-stabilization}
Let $B$ be a closed braid and $B_-$ a stabilization of $B$. Set $s=sl(B)$. Then for any $(i,k)\in \zed^{\oplus 2}$, there are a long exact sequence of $\zed$-graded $\Q[a]$-modules
\begin{equation}\label{eq-exact-seq-stabilization-s-1}
\cdots \rightarrow \fH_N^{s-1,i,\star,k}(B_-) \rightarrow \fH_N^{s,i-1,\star,k+N+1}(B)\{-1\}_a \rightarrow H_N^{s,i-1,k+N+1}(B)\otimes_\Q \Q[a]\{s-1\}_a \rightarrow \fH_N^{s-1,i+1,\star,k}(B_-) \rightarrow \cdots
\end{equation}
and a short exact sequence of $\zed$-graded $\Q[a]$-modules
\begin{equation}\label{eq-exact-seq-stabilization-s}
0 \rightarrow H_N^{s,i,k}(B)\otimes_\Q \Q[a]\{s\}_a \rightarrow \fH_N^{s,i,\star,k}(B_-) \rightarrow \fH_N^{s-1,i-1,\star,k+N+1}(B)\{-1\}_a \rightarrow 0,
\end{equation}
where $H_N(B)$ is the $\slmf(N)$ Khovanov-Rozansky homology of $B$ defined in \cite{KR1}.
\end{theorem}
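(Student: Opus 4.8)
The plan is to exploit the identification $\fC_N(B_-) \cong \mathrm{cone}(\pi_0)\{-2,0\}$ from \cite[Theorem 1.5]{Wu-triple-trans} together with the explicit module structure of $\fH_N(B)$ provided by Theorem \ref{thm-fH-module}. The key observation is that on each fixed $(\ve,i,k)$-component, Theorem \ref{thm-fH-module} tells us that $\fH_N^{\ve,i,\star,k}(B)$ decomposes into free summands $\Q[a]\{s\}_a$ and $\Q[a]\{s+2\}_a$ (in the top $\zed_2$-degree $\ve = s$, where the parity forces this) plus torsion summands $\Q[a]/(a)\{s_q\}$. The map $\pi_0$ in the long exact sequence is reduction mod $a$, so on homology it becomes, component-wise, the multiplication-by-$a$ structure map. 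Thus I would first analyze $\ker(\pi_0)$ and $\coker(\pi_0)$ on each component purely algebraically: on a free summand $\Q[a]\{s\}_a$, $\pi_0$ is surjective with kernel $a\Q[a]\{s\}_a \cong \Q[a]\{s-1\}_a$ (using that $a$ has $a$-degree $1$... here one must track the normalization, since the shift in the statement is $\{-1\}_a$), and on a torsion summand $\Q[a]/(a)$ the map $\pi_0$ is an isomorphism onto its image, contributing to neither kernel nor cokernel of the connecting maps. Feeding this into the long exact sequence of the mapping cone, the torsion part of $\fH_N(B)$ drops out and what remains in $\fH_N^{\ve,i,\star,k}(B_-)$ is governed by the free part, which is exactly $\dim_\Q H_N^{\ve,i,k}(B)$ copies of $\Q[a]$.

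Concretely, I would split into the two relevant $\zed_2$-degrees. For $\ve = s-1$: by Remark \ref{remark-torsion-s-1}, $\fH_N^{s-1,i,\star,k}(B)$ is pure torsion, so $\pi_0$ is injective on it, hence the connecting maps in the cone's long exact sequence identify $\fH_N^{s-1,i,\star,k}(B_-)$ with pieces built from the \emph{free} part of $\fH_N^{s,\bullet,\star,\bullet}(B)$ — one must check that the grading shifts in $\mathrm{cone}(\pi_0)\{-2,0\}$ together with the homological shift of the cone produce the index shift to $(i-1, k+N+1)$; the $N+1$ arises because the stabilization braid-like relation in \cite{Wu-triple-trans} interacts the $a$- and $x$-gradings through the potential $ax^{N+1}$. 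This yields \eqref{eq-exact-seq-stabilization-s-1}, where the term $H_N^{s,i-1,k+N+1}(B)\otimes_\Q\Q[a]\{s-1\}_a$ is the cokernel of $\pi_0$ restricted to the free part (cokernel of $a\cdot$ on $(\Q[a]\{s\}_a)^{\oplus l}\oplus(\Q[a]\{s+2\}_a)^{\oplus(\dim - l)}$ is $(\Q[a]/(a))^{\oplus \dim H_N}$, and after the overall shift this lands at $\{s-1\}_a$). For $\ve = s$: since $\fH_N^{s-1,\bullet,\star,\bullet}(B)$ is torsion, $\pi_0$ there is injective, so in the long exact sequence the connecting map into $\fH_N^{s,i,\star,k}(B_-)$ vanishes after the free summands are accounted for, and the sequence splits as a short exact sequence $0 \to \coker(\pi_0|_{\text{free part of }\fH_N^{s,\bullet,\star,\bullet}(B)}) \to \fH_N^{s,i,\star,k}(B_-) \to \ker(\pi_0|_{\fH_N^{s-1,\bullet,\star,\bullet}(B)}) \to 0$; identifying the first term with $H_N^{s,i,k}(B)\otimes_\Q\Q[a]\{s\}_a$ and the last with $\fH_N^{s-1,i-1,\star,k+N+1}(B)\{-1\}_a$ (injectivity of $\pi_0$ on torsion means $\ker = 0$... so in fact the correct reading is that this rightmost term \emph{is} the appropriately-shifted torsion module, coming from the kernel of $\pi_0$ on the \emph{ambient} $\fH_N^{s-1}$ before restriction — I would be careful here that it is the $a$-torsion submodule, which for a module killed by $a$ is the whole thing) gives \eqref{eq-exact-seq-stabilization-s}.

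The main obstacle I anticipate is bookkeeping rather than conceptual: precisely matching the grading shifts. One must (i) correctly propagate the $\{-2,0\}$ shift and the homological degree shift built into the mapping cone through all four of the $\zed_2$-, homological, $a$-, and $x$-gradings, (ii) verify the claimed shift $\{-1\}_a$ for the free-to-torsion transition, which requires knowing the precise $a$-degree normalization used for the differential $d_\chi$ and for the variable $a$ itself in \cite{Wu-triple-trans}, and (iii) confirm that the term $H_N^{s,i-1,k+N+1}(B)\otimes_\Q\Q[a]$ appearing in \eqref{eq-exact-seq-stabilization-s-1} is genuinely $a$-free — i.e. that \emph{all} of $\coker(\pi_0)$ on the free part is free, with no hidden torsion from the boundary maps — which follows because $\pi_0$ is literally reduction mod $a$ and hence split surjective onto $\fH_N(B)/a\fH_N(B)$ in each degree where the source is free. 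A secondary subtlety is ensuring that the long exact sequence of the cone actually \emph{decouples} into the stated short exact sequence in the $\ve = s$ case and into the stated six-term-style long exact sequence in the $\ve = s-1$ case; this decoupling is exactly what Theorem \ref{thm-fH-module} buys us, since it pins down which $\zed_2$-components are free and which are torsion, and the differential $d_\chi$ preserves $\zed_2$-degree so the long exact sequence respects this splitting.
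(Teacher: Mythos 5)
Your proposal identifies the right starting ingredients --- the mapping-cone description of $\fC_N(B_-)$ and the module structure from Theorem \ref{thm-fH-module} --- but the central step has a genuine gap, and it is exactly the one the paper flags in the introduction. You propose to analyze the map $\pi_0\colon \fH_N(B)\to\mathscr{H}_N(B)$ ``component-wise'' as reduction mod $a$ on each summand of $\fH_N(B)$: you assert that on a free summand $\Q[a]\{s\}_a$ the map $\pi_0$ is surjective with kernel $a\Q[a]\{s\}_a$, and that on a torsion summand $\Q[a]/(a)$ it is injective. This implicitly identifies $\mathscr{H}_N(B)$ with $\fH_N(B)/a\fH_N(B)$. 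That identification is false in general: $\mathscr{H}_N(B)$ is the $d_\chi$-homology of $H(\fC_N(B)/a\fC_N(B),d_{mf})$, and taking homology does not commute with reduction mod $a$ --- there are universal-coefficients (Tor) contributions. The paper says this explicitly: ``Generally, it is not very easy to compute $\mathscr{H}_N(B)$ even if $\fH_N(B)$ is known,'' and this is precisely why the naive long exact sequence in the introduction is not directly useful. Your proof, as written, would need a lemma of the form $\mathscr{H}_N(B)\cong\fH_N(B)/a\fH_N(B)$, which is not available.

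The paper circumvents this by working one level down, at the $d_{mf}$-homology stage, where the decomposition is under control. It takes $d_{mf}$-homology of $0\to\fC_N(B)\xrightarrow{a}\fC_N(B)\{-2,0\}\to\mathscr{C}_N(B)\{-2,0\}\to 0$, obtaining a long exact sequence of $d_\chi$-chain-complexes. The $\zed_2$-, $a$-, and $x$-degree shifts appearing there (in particular the $\{-1\}_a$ and the $k\mapsto k+N+1$ you were unsure about) come from $d_{mf}$ being homogeneous of bidegree $(1,N+1)$ in $(a,x)$ and of $\zed_2$-degree $1$; they are not tied to $\deg_a a$, which is $2$, not $1$ as you suggest. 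Then Lemma \ref{lemma-module-homology-resolved-braid} and Corollary \ref{corollary-MOY-homology-vanish} give the free/torsion splitting $F^{\ve,i,\star,k}\oplus T^{\ve,i,\star,k}$ of the $d_{mf}$-homology (with $F^{s-1,\cdot,\star,\cdot}\cong 0$ and $T$ a sum of $\Q[a]/(a)$'s), so the $d_{mf}$-level long exact sequence splits into the four-term exact sequences \eqref{eq-exact-seq-s-1} and \eqref{eq-exact-seq-s} of the paper. Finally, the mapping-cone Lemmas \ref{lemma-cone-1} and \ref{lemma-cone-2} are applied to these, together with the identifications $H^i(F^{s,\star,\star,k})\cong H_N^{s,i,k}(B)\otimes_\Q\Q[a]\{s\}_a$ (Lemma \ref{lemma-free-homology}) and $\fH_N^{s-1,i,\star,k}(B)$ torsion (Remark \ref{remark-torsion-s-1}), to produce \eqref{eq-exact-seq-stabilization-s-1} and \eqref{eq-exact-seq-stabilization-s}. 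In short: the decoupling into free and torsion parts that your argument needs does hold, but at the $d_{mf}$-homology level, not at the $\fH_N$ level; working directly with $\fH_N$ and $\mathscr{H}_N$ as you propose does not give you enough control over $\pi_0$.
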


In \cite{Eliashberg-Fraser}, Eliashberg and Fraser showed that two transverse unknots are transverse isotopic if and only if their self linking numbers are equal. Bennequin's inequality \cite{Ben} implies that the highest self linking number of a transverse unknot is $-1$, which is attained by the $1$-strand transverse closed braid. Denote by $U_0$ the transverse unknot with self linking $-1$ and by $U_m$ the transverse unknot obtained from $U_0$ by $m$ stabilizations. Then every transverse unknot is transverse isotopic to $U_m$ for some $m \geq 0$.

As an application of Theorem \ref{thm-stabilization}, we compute $\fH_N$ for all the transverse unknots. Before stating the result, let us recall that the $\zed$-grading of $\Q[a]$ is given by $\deg_a a =2$. We make $\Q[a]$ a $\zed_2\oplus \zed^{\oplus 3}$-graded $\Q[a]$-module by making the $\zed_2$-, homological and $x$-gradings all $0$ on $\Q[a]$.

\begin{corollary}\label{cor-unknots}
Let $\mathcal{F}$ and $\mathcal{T}$ be the $\zed_2\oplus \zed^{\oplus 3}$-graded $\Q[a]$-modules
\begin{eqnarray*}
\mathcal{F} & := & \bigoplus_{l=0}^{N-1}\Q[a]\left\langle 1\right\rangle \{-1, -N+1+2l\}, \\
\mathcal{T} & := & \bigoplus_{l=0}^{\infty} \Q[a]/(a) \left\langle 1\right\rangle \{-1, N+1+2l\},
\end{eqnarray*}
where ``$\left\langle \ve \right\rangle$" means shifting the $\zed_2$-grading by $\ve$ and ``$\{j,k\}$" means shifting the $a$-grading by $j$ and the $x$-gradings by $k$. Then,
\begin{eqnarray*}
\fH_N(U_0) & \cong & \mathcal{F} \oplus \mathcal{T}, \\
\fH_N(U_1) & \cong & \mathcal{F} \oplus \mathcal{T} \left\langle 1 \right\rangle \{-1, -N-1\}\|1\|,
\end{eqnarray*}
and, for $m \geq 2$,
\[
\fH_N(U_m) \cong \mathcal{F}\{-2(m-1),0\} \oplus \mathcal{T} \left\langle m \right\rangle \{-m, -m(N+1)\}\|m\| \oplus \bigoplus_{l=1}^{m-1} \mathcal{F}/a\mathcal{F} \left\langle l \right\rangle \{-2m+l, -l(N+1)\}\|l+1\|,
\]
where ``$\|l\|$" means shifting the homological grading by $l$.
\end{corollary}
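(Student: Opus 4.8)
The plan is to compute $\fH_N(U_0)$ by hand and then induct on $m$, feeding $U_m=(U_{m-1})_-$ into Theorem \ref{thm-stabilization}. Since $U_0$ is the closure of the trivial $1$-braid, $\fC_N(U_0)$ is the matrix factorization of a single circle and carries no $\chi$-differential; its homology is $\Q[a,x]\big/\big(\partial_x(ax^{N+1})\big)\cong\Q[a,x]/(ax^N)$, up to the overall $\zed_2\oplus\zed^{\oplus3}$-grading shift dictated by our normalization (which I would pin down by checking that setting $a=0$ recovers $H_N(\text{unknot})\cong\Q[x]/(x^N)$ with the grading of \cite{KR1}). As a $\zed$-graded $\Q[a]$-module this is $\Q[a]^{\oplus N}\oplus\bigoplus_{j\ge N}\big(\Q[a]/(a)\big)x^j$, since $1,x,\dots,x^{N-1}$ span a free submodule while each $x^j$ with $j\ge N$ is killed by $a$; everything sits in one $\zed_2$-degree, and tracking the homological, $a$- and $x$-gradings identifies the free part with $\mathcal F$ and the torsion part with $\mathcal T$, so $\fH_N(U_0)\cong\mathcal F\oplus\mathcal T$. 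Note the free part here sits in $a$-degree $sl(U_0)$, the exceptional $l=\dim_\Q H_N$ case of Theorem \ref{thm-fH-module}; for $m\ge1$ the formula instead puts the free part in $a$-degree $sl(U_m)+2$.

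For the inductive step, assume the formula for $\fH_N(U_{m-1})$ and apply Theorem \ref{thm-stabilization} with $B=U_{m-1}$, so $s=sl(U_{m-1})=-2m+1$ and $B_-=U_m$. Two facts feed in: $U_{m-1}$ is smoothly the unknot, so $H_N(U_{m-1})\cong\Q[x]/(x^N)$ lies in homological degree $0$; and $U_m$ is a knot, so $s$ and $s-1$ have opposite parity, whence the two $\zed_2$-degrees appearing in \eqref{eq-exact-seq-stabilization-s} and \eqref{eq-exact-seq-stabilization-s-1} exhaust $\zed_2$ and the two sequences together determine $\fH_N(U_m)$. From \eqref{eq-exact-seq-stabilization-s}, $\fH_N^{s,\star,\star,\star}(U_m)$ is an extension of a regrading of $\fH_N^{s-1,\star,\star,\star}(U_{m-1})$ by the free module $H_N^{s,\star,\star}(U_{m-1})\otimes_\Q\Q[a]\{s\}_a$; the latter lies in homological degree $0$, while the former — torsion by Remark \ref{remark-torsion-s-1} applied to $U_{m-1}$, and supported in homological degree $\ge1$, hence $\ge2$ after the regrading — lies off homological degree $0$, so the extension splits, giving the free block $\mathcal F\{-2(m-1),0\}$ together with a shift of the earlier torsion, which I would check is the new $\mathcal T$-tail and the even-index $\mathcal F/a\mathcal F$-blocks of the claimed answer.

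For \eqref{eq-exact-seq-stabilization-s-1} I would analyze the internal map $\beta$ out of $\fH_N^{s,i-1,\star,k+N+1}(U_{m-1})\{-1\}_a$. It vanishes unless $i-1=0$, and in homological degree $0$, in the $x$-degrees where the target is nonzero, the source is free of rank $\dim_\Q H_N^{s,\star,\star}(U_{m-1})$ (no torsion occurs there, by the inductive hypothesis and Theorem \ref{thm-fH-module}), so $\beta$ is a degree-$0$ map of free $\Q[a]$-modules of equal rank. The available $a$-degrees then pin $\beta$ down up to a scalar — an isomorphism when $\fH_N^s(U_{m-1})$ has its free part in $a$-degree $s$ (the case $B=U_0$), and $\mathrm{id}$ tensored with multiplication by $a$ otherwise — and this scalar is nonzero, since otherwise $\ker\beta$, hence $\fH_N^{s-1,\star,\star,\star}(U_m)$, would acquire a free $\Q[a]$-summand, contradicting Remark \ref{remark-torsion-s-1} applied to $U_m$ (note $s-1\equiv sl(U_m)-1\pmod 2$). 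Thus $\coker\beta$ is $0$ for $m=1$ and a copy of $H_N^{s,\star,\star}(U_{m-1})\otimes_\Q\Q[a]/(a)$ for $m\ge2$, supplying the new $\mathcal F/a\mathcal F$-block one homological degree up, while $\ker\beta$ collects the remaining torsion of $\fH_N^s(U_{m-1})$, regraded. Splicing the long exact sequence and matching all four gradings yields $\fH_N(U_1)$ and, for $m\ge2$, $\fH_N(U_m)$.

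The step I expect to be the main obstacle is the identification of $\beta$ (and its counterpart in \eqref{eq-exact-seq-stabilization-s}) on the free and torsion parts separately: the nonvanishing of the scalar and the cleanness of the splitting are exactly where Remark \ref{remark-torsion-s-1} and the $a$-degree bounds of Theorem \ref{thm-fH-module} must be invoked — they forbid a free summand in the sub-top $\zed_2$-degree and keep the free and torsion parts in disjoint $a$- and homological degrees. The rest — the base-case computation and the four-fold grading bookkeeping through the two sequences — is routine, if lengthy.
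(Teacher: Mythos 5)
Your proof is correct and follows essentially the same route as the paper: base case from the one-strand computation (the paper's Lemma \ref{lemma-homology-empty-braid} at $b=1$ is precisely your $\Q[a,x]/(ax^N)$), then induction through the two exact sequences of Theorem \ref{thm-stabilization}, with Remark \ref{remark-torsion-s-1} and the $a$-degree rigidity of $\mathcal{F}$ doing the work of determining the internal map $\beta$. Where the paper isolates a small auxiliary Lemma \ref{lemma-quotient} to show an injective degree-preserving (resp.\ $a$-degree-$2$) endomorphism of $\mathcal{F}$ is an isomorphism (resp.\ has cokernel $\mathcal{F}/a\mathcal{F}$), you argue the same fact inline via the observation that the source's free and torsion parts occupy disjoint homological degrees for $m\geq 2$ and disjoint $x$-degrees for $m=1$, then pin down $\beta$ by its $a$-degree and rule out the zero scalar with Remark \ref{remark-torsion-s-1} applied to $U_m$; the paper instead applies Remark \ref{remark-torsion-s-1} to the leftmost term $\fH_N^{0,1,\star,\star}(U_{m+1})$ to split the four-term sequence before invoking the lemma. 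Both are sound, the bookkeeping is just organized slightly differently. One minor caution: your phrase ``the extension splits'' for \eqref{eq-exact-seq-stabilization-s} is technically overreaching as stated — the point is cleaner: for each fixed homological degree $i$ at most one of the two outer terms of that short exact sequence is nonzero (the free part lives only at $i=0$, the torsion only at $i\geq 2$), so there is nothing to split. Spelling that out would tighten the argument without changing its substance.
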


\subsection{Organization of this paper} In Section 2, we review the definition of $\fH_N$. Then we study the $\Q[a]$-module structure of $\fH_N$ and prove Theorem \ref{thm-fH-module} in Section \ref{sec-fH-module}. Finally, we prove Theorem \ref{thm-stabilization} and Corollary \ref{cor-unknots} in Section \ref{sec-stabilization}.

This paper is self-contained for the most part. Of course, some prior knowledge of the Khovanov-Rozansky homology, especially of \cite{KR1,Wu-triple-trans}, will be helpful.

\section{Definition of $\fH_N$}\label{sec-def}

In this section, we quickly review the definition of the transverse Khovanov-Rozansky homology $\fH_N$ in \cite{Wu-triple-trans}, which is every similar to the definition of the Khovanov-Rozansky homology in \cite{KR1,KR2}.

\subsection{$\zed_2\oplus\zed^{\oplus 2}$-graded matrix factorizations over $\Q[a,x_1,\dots,x_k]$}

\begin{definition}\label{def-bigrading-ring}
We define a $\zed^{\oplus2}$-grading on $R=\Q[a,x_1,\dots,x_k]$ by letting $\deg a = (2,0)$ and $\deg x_i =(0,2)$ for $i=1,\dots, k$. We call the first component of this $\zed^{\oplus2}$-grading the $a$-grading and denote its degree function by $\deg_a$. We call the second component of this $\zed^{\oplus2}$-grading the $x$-grading and denote its degree function by $\deg_x$. An element of $R$ is said to be homogeneous if it is homogeneous with respect to both the $a$-grading and the $x$-grading.

A $\zed^{\oplus2}$-graded $R$-module $M$ is a $R$-module $M$ equipped with a $\zed^{\oplus2}$-grading such that, for any homogeneous element\footnote{An element of $M$ is said to be homogeneous if it is homogeneous with respect to both $\zed$-gradings.} $m$ of $M$, $\deg (am) = \deg m + (2,0)$ and $\deg (x_i m) = \deg m +(0,2)$ for $i=1,\dots, k$. Again, we call the first component of this $\zed^{\oplus2}$-grading of $M$ the $a$-grading and denote its degree function by $\deg_a$. We call the second component of this $\zed^{\oplus2}$-grading of $M$ the $x$-grading and denote its degree function by $\deg_x$. 

We say that the $\zed^{\oplus2}$-grading on $M$ is bounded below if both the $a$-grading and the $x$-grading are bounded below.

For a $\zed^{\oplus2}$-graded $R$-module $M$, we denote by $M\{j,k\}$ the $\zed^{\oplus2}$-graded $R$-module obtained by shifting the $\zed^{\oplus2}$-grading of $M$ by $(j,k)$. That is, for any homogeneous element $m$ of $M$, $\deg_{M\{j,k\}} m = \deg_M m + (j,k)$.
\end{definition}

\begin{definition}\label{def-mf}
Let $w$ be a homogeneous element with bidegree $(2,2N+2)$ of $R=\Q[a,x_1,\dots,x_k]$. A $\zed_2\oplus\zed^{\oplus2}$-graded matrix factorization $M$ of $w$ over $R$ is a collection of two $\zed^{\oplus2}$-graded free $R$-modules $M_0$, $M_1$ and two homogeneous $R$-module maps $d_0:M_0\rightarrow M_1$, $d_1:M_1\rightarrow M_0$ of bidegree $(1,N+1)$, called differential maps, such that 
\[
d_1 \circ d_0=w\cdot\id_{M_0}, \hspace{1cm}  d_0 \circ d_1=w\cdot\id_{M_1}.
\]
The $\zed_2$-grading of $M$ takes value $\ve$ on $M_\ve$. The $a$- and $x$-gradings of $M$ are the $a$- and $x$-gradings of the underlying $\zed^{\oplus 2}$-graded $R$-module $M_0 \oplus M_1$.

We usually write $M$ as $M_0 \xrightarrow{d_0} M_1 \xrightarrow{d_1} M_0$.
\end{definition}

Following \cite{KR1}, we denote by $M\left\langle 1\right\rangle$ the matrix factorization $M_1 \xrightarrow{d_1} M_0 \xrightarrow{d_0} M_1$ and write $M\left\langle j\right\rangle = M \underbrace{\left\langle 1\right\rangle\cdots\left\langle 1\right\rangle}_{j \text{ times }}$.

For any $\zed_2\oplus\zed^{\oplus2}$-graded matrix factorization $M$ of $w$ over $R$ and $j,k \in \zed$, $M\{j,k\}$ is naturally a $\zed_2\oplus\zed^{\oplus2}$-graded matrix factorization of $w$ over $R$.

For any two $\zed_2\oplus\zed^{\oplus2}$-graded matrix factorizations $M$ and $M'$ of $w$ over $R$, $M\oplus M'$ is naturally a $\zed_2\oplus\zed^{\oplus2}$-graded matrix factorization of $w$ over $R$.

Let $w$ and $w'$ be two homogeneous elements of $R$ with bidegree $(2,2N+2)$. For $\zed_2\oplus\zed^{\oplus2}$-graded matrix factorizations $M$ of $w$ and $M'$ of $w'$ over $R$, the tensor product $M\otimes_R M'$ is the $\zed_2\oplus\zed^{\oplus2}$-graded matrix factorization of $w+w'$ over $R$ such that:
\begin{itemize}
	\item $(M\otimes M')_0 = (M_0\otimes M'_0)\oplus (M_1\otimes M'_1)$, $(M\otimes M')_1 = (M_1\otimes M'_0)\oplus (M_1\otimes M'_0)$;
	\item The differential is given by the signed Leibniz rule. That is, $d(m\otimes m')=(dm)\otimes m' + (-1)^\ve m \otimes (dm')$ for $m\in M_\ve$ and $m'\in M'$.
\end{itemize}

\begin{definition}\label{def-morph-mf}
Let $w$ be a homogeneous element of $R$ with bidegree $(2,2N+2)$, and $M$, $M'$ any two $\zed_2\oplus\zed^{\oplus2}$-graded matrix factorizations of $w$ over $R$. 
\begin{enumerate}
	\item A morphism of $\zed_2\oplus\zed^{\oplus2}$-graded matrix factorizations from $M$ to $M'$ is a homogeneous $R$-module homomorphism $f:M\rightarrow M'$ preserving the $\zed_2\oplus\zed^{\oplus2}$-grading satisfying $d_{M'}f=fd_{M}$. We denote by $\Hom_\mf(M,M')$ the $\Q$-space of all morphisms of $\zed_2\oplus\zed^{\oplus2}$-graded matrix factorizations from $M$ to $M'$.
	\item An isomorphism of $\zed_2\oplus\zed^{\oplus2}$-graded matrix factorizations from $M$ to $M'$ is a morphism of $\zed_2\oplus\zed^{\oplus2}$-graded matrix factorizations that is also an isomorphism of the underlying $R$-modules. We say that $M$ and $M'$ are isomorphic, or $M\cong M'$, if there is an isomorphism from $M$ to $M'$.
	\item Two morphisms $f$ and $g$ of $\zed_2\oplus\zed^{\oplus2}$-graded matrix factorizations from $M$ to $M'$ are called homotopic if there is an $R$-module homomorphism $h:M\rightarrow M'$ shifting the $\zed_2$-grading by $1$ such that $f-g = d_{M'}h+hd_M$. In this case, we write $f \simeq g$. We denote by $\Hom_\hmf(M,M')$ the $\Q$-space of all homotopy classes of morphisms of $\zed_2\oplus\zed^{\oplus2}$-graded matrix factorizations from $M$ to $M'$. That is, $\Hom_\hmf(M,M') = \Hom_\mf(M,M') / \simeq$.
	\item $M$ and $M'$ are called homotopic, or $M\simeq M'$, if there are morphisms $f:M\rightarrow M'$ and $g:M'\rightarrow M$ such that $g\circ f \simeq \id_M$ and $f\circ g \simeq \id_{M'}$. $f$ and $g$ are called homotopy equivalences between $M$ and $M'$.
  \item We say that $M$ is homotopically finite if it is homotopic to a finitely generated graded matrix factorization of $w$ over $R$. 
\end{enumerate}

We define categories $\mf^{\mathrm{all}}_{R,w}$, $\mf_{R,w}$, $\hmf^{\mathrm{all}}_{R,w}$ and $\hmf_{R,w}$ by the following table. 

\begin{center}
\small{
\begin{tabular}{|c|l|c|}
\hline
Category & Objects & Morphisms \\
\hline
$\mf^{\mathrm{all}}_{R,w}$ & all $\zed_2\oplus\zed^{\oplus2}$-graded matrix factorizations of $w$ over $R$ with the   & $\Hom_{\mf}$ \\
 & $\zed^{\oplus2}$-grading bounded below &  \\
\hline
$\mf_{R,w}$ & all homotopically finite $\zed_2\oplus\zed^{\oplus2}$-graded matrix factorizations of $w$    & $\Hom_{\mf}$ \\
 & over $R$ with the $\zed^{\oplus2}$-grading bounded below &  \\
\hline
$\hmf^{\mathrm{all}}_{R,w}$ & all $\zed_2\oplus\zed^{\oplus2}$-graded matrix factorizations of $w$ over $R$ with the   & $\Hom_{\hmf}$ \\
 & $\zed^{\oplus2}$-grading bounded below &  \\
\hline
$\hmf_{R,w}$ & all homotopically finite $\zed_2\oplus\zed^{\oplus2}$-graded matrix factorizations of $w$    & $\Hom_{\hmf}$ \\
 & over $R$ with the $\zed^{\oplus2}$-grading bounded below &  \\
\hline
\end{tabular}
}
\end{center}
\end{definition}

\begin{definition}\label{def-koszul-mf}
If $a_0,a_1\in R$ are homogeneous elements with $\deg a_0 +\deg a_1=(2,2N+2)$, then denote by $(a_0,a_1)_R$ the $\zed_2\oplus\zed^{\oplus2}$-graded matrix factorization $R \xrightarrow{a_0} R\{1-\deg_a a_0,~N+1-\deg_x{a_0}\} \xrightarrow{a_1} R$ of $a_0a_1$ over $R$. More generally, if $a_{1,0},a_{1,1},\dots,a_{l,0},a_{l,1}\in R$ are homogeneous with $\deg a_{j,0} +\deg a_{j,1}=(2,2N+2)$, then denote by 
\[
\left(%
\begin{array}{cc}
  a_{1,0}, & a_{1,1} \\
  a_{2,0}, & a_{2,1} \\
  \dots & \dots \\
  a_{l,0}, & a_{l,1}
\end{array}%
\right)_R
\]
the tenser product $(a_{1,0},a_{1,1})_R \otimes_R (a_{2,0},a_{2,1})_R \otimes_R \cdots \otimes_R (a_{l,0},a_{l,1})_R$, which is a $\zed_2\oplus\zed^{\oplus2}$-graded matrix factorization of $\sum_{j=1}^l a_{j,0} a_{j,1}$ over $R$, and is call the Koszul matrix factorization associated to the above matrix. We drop``$R$" from the notation when it is clear from the context. 

Note that the above Koszul matrix factorization is finitely generated over $R$.
\end{definition}

The following proposition from \cite{KR1} is useful in computing the homology of some MOY graphs.

\begin{proposition}\cite[Proposition 10]{KR1}\label{prop-contraction-weak}
Let $I$ be an ideal of $R$ generated by homogeneous elements. Assume $w$, $a_0$ and $a_1$ are homogeneous elements of $R$ such that $\deg w=\deg a_0 +\deg a_1 = (2,2N+2)$ and $w+a_0a_1 \in I$. Then $w \in I+(a_0)$ and $w \in I+(a_1)$. 

Let $M$ be a $\zed_2\oplus\zed^{\oplus2}$-graded matrix factorization of $w$ over $R$, and $\widetilde{M}=M \otimes_R (a_0,a_1)_R$. Then ${\widetilde{M}/I\widetilde{M}}$, ${M/(I+(a_0))M}$ and ${M/(I+(a_1))M}$ are all $\zed_2 \oplus\zed^{\oplus2}$-graded chain complexes of $R$-modules.
\begin{enumerate}
	\item If $a_0$ is not a zero-divisor in $R/I$, then there is an $R$-linear quasi-isomorphism $f:{\widetilde{M}/I\widetilde{M}} \rightarrow {(M/(I+(a_0))M)\left\langle 1\right\rangle \{1-\deg_a a_0, N+1-\deg_x a_0 \}}$ that preserves the $\zed_2\oplus\zed^{\oplus 2}$-grading.
	\item If $a_1$ is not a zero-divisor in $R/I$, then there is an $R$-linear quasi-isomorphism $g:{\widetilde{M}/I\widetilde{M}} \rightarrow {M/(I+(a_1))M}$ that preserves the $\zed_2\oplus\zed^{\oplus 2}$-grading.
\end{enumerate}
\end{proposition}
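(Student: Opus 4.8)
The plan is to carry out the standard ``excluding a Koszul variable'' reduction. The preliminary assertions are immediate: from $w+a_0a_1\in I$ one has $w\equiv -a_0a_1$ modulo $I$, hence $w\in I+(a_0)$ and $w\in I+(a_1)$; consequently $w$ annihilates $M/(I+(a_0))M$ and $M/(I+(a_1))M$ while $w+a_0a_1$ annihilates $\widetilde{M}/I\widetilde{M}$, so in each case the square of the differential, which is multiplication by the relevant potential, vanishes, and all three are $\zed_2\oplus\zed^{\oplus2}$-graded chain complexes of $R$-modules. I would prove part (1) in full and deduce part (2) from the same argument with $a_0$ and $a_1$ interchanged; the only discrepancy is the grading shift $\langle1\rangle\{1-\deg_a a_0,\ N+1-\deg_x a_0\}$, which is present in (1) and absent in (2) because there the surviving Koszul summand is the un-shifted one.

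The first real step is to write $\widetilde{M}/I\widetilde{M}$ down explicitly. Put $\bar{R}=R/I$ and $\bar{M}=M/IM$. Since $I$ is generated by homogeneous elements and $M$ is $R$-free, $\bar{M}$ is a $\zed^{\oplus2}$-graded free $\bar{R}$-module, and $\widetilde{M}/I\widetilde{M}\cong\bar{M}\otimes_{\bar R}(\bar a_0,\bar a_1)_{\bar R}$. Expanding the tensor product and the signed Leibniz rule, its underlying graded module is a direct sum $A\oplus B$, where $A\cong\bar M$ is the Koszul-degree-$0$ summand and $B\cong\bar M\langle1\rangle\{1-\deg_a a_0,\ N+1-\deg_x a_0\}$ is the Koszul-degree-$1$ summand, and the differential has block form $\bigl(\begin{smallmatrix}d_{\bar M}&\pm\bar a_1\\ \pm\bar a_0&d_{\bar M}\end{smallmatrix}\bigr)$, the off-diagonal entries being the obvious multiplication maps carried into the opposite summand; the signs are precisely those forced by $D^2=(\bar w+\bar a_0\bar a_1)\cdot\id=0$.

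Next I would define the comparison map $f\colon\widetilde{M}/I\widetilde{M}\to(M/(I+(a_0))M)\langle1\rangle\{1-\deg_a a_0,\ N+1-\deg_x a_0\}$ by projecting onto the summand $B\cong\bar M\langle1\rangle\{\cdots\}$ and then reducing modulo $\bar a_0$ (note $B/\bar a_0 B=(\bar M/\bar a_0\bar M)\langle1\rangle\{\cdots\}=(M/(I+(a_0))M)\langle1\rangle\{\cdots\}$). Using the block form of the differential one checks directly that $f$ is a surjective, $R$-linear, $\zed_2\oplus\zed^{\oplus2}$-grading-preserving chain map, that $\ker f=A\oplus\bar a_0\bar M$, and that $\ker f$ is a subcomplex. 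Thus there is a short exact sequence of graded chain complexes $0\to\ker f\to\widetilde{M}/I\widetilde{M}\xrightarrow{f}(M/(I+(a_0))M)\langle1\rangle\{\cdots\}\to 0$, and everything reduces to showing $\ker f$ is acyclic.

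The crux — and the step needing the most care — is the contractibility of $\ker f$, and this is exactly where the hypothesis enters: since $\bar M$ is $\bar R$-free and $\bar a_0$ is a non-zero-divisor in $\bar R$, multiplication by $\bar a_0$ is an injective graded map $\bar M\to\bar a_0\bar M$, hence a graded isomorphism $\iota$ onto its image. Identifying the summand $\bar a_0\bar M\subseteq B$ with a shifted copy of $\bar M$ via $\iota$, the restricted differential of $\ker f$ takes the form $\bigl(\begin{smallmatrix}d_{\bar M}&*\\ \pm\,\id&d_{\bar M}\end{smallmatrix}\bigr)$, whose lower-left entry is an isomorphism of the correct bidegree $(1,N+1)$ and odd $\zed_2$-degree; since $d_{\bar M}$ anticommutes with $\iota$, the map $(a,b)\mapsto(\iota^{-1}b,0)$ is an explicit contracting homotopy (equivalently, one invokes the Gaussian-elimination lemma for $\zed_2\oplus\zed^{\oplus2}$-graded complexes). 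Hence $\ker f$ is contractible, so $f$ is a quasi-isomorphism, proving (1); part (2) is the symmetric statement. The remaining work — unwinding the tensor-product differential, pinning down the Leibniz signs, and verifying that the identifications $A\cong\bar M$, $B\cong\bar M\langle1\rangle\{\cdots\}$ and $\bar a_0\bar M\cong$ shifted $\bar M$ respect the entire $\zed_2\oplus\zed^{\oplus2}$-grading so that $f$ and the homotopy are genuinely homogeneous — is routine bookkeeping.
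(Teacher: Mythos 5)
The paper cites this result from \cite[Proposition 10]{KR1} without reproducing a proof, so there is no internal argument to compare against; your reduction via the Koszul decomposition of $\widetilde M/I\widetilde M$, the projection-and-reduce map onto the appropriate Koszul summand, and Gaussian elimination on $\ker f$ using the non-zero-divisor hypothesis is correct and is essentially the standard argument given in [KR1].
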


\subsection{The matrix factorization associated to a MOY graph}

\begin{definition}\label{def-MOY}
A MOY graph $\Gamma$ is an oriented graph embedded in the plane satisfying:
\begin{enumerate}
	\item Every edge of $\Gamma$ is colored by $1$ or $2$.
	\item Every vertex of $\Gamma$ is $1$-, $2$- or $3$-valent.
	\item Every $1$-valent vertex of $\Gamma$ is either the initial point of a $1$-colored edge or the terminal point of a $1$-colored edge. We call $1$-valent vertices of $\Gamma$ endpoints of $\Gamma$.
	\item Every $2$-valent vertex of $\Gamma$ is the initial point of a $1$-colored edge and the terminal point of a $1$-colored edge.
	\item Every $3$-valent vertex of $\Gamma$ is 
  \begin{itemize}
		\item either the initial point of two $1$-colored edges and the terminal point of a $2$-colored edge,
		\item or the terminal point of two $1$-colored edges and the initial point of a $2$-colored edge.
	\end{itemize}
\end{enumerate}
\end{definition}

In particular, Definition \ref{def-MOY} means that every $2$-colored edge of $\Gamma$ has a neighborhood that looks like the local configuration in Figure \ref{fig-wide-edge}.

\begin{figure}[ht]

\[
\setlength{\unitlength}{1pt}
\begin{picture}(60,30)(-30,0)

\put(-10,15){\vector(1,0){20}}

\put(-25,0){\vector(1,1){15}}

\put(-25,30){\vector(1,-1){15}}

\put(10,15){\vector(1,-1){15}}

\put(10,15){\vector(1,1){15}}

\put(-2,17){\tiny{$2$}}

\put(-17,0){\tiny{$1$}}

\put(-17,25){\tiny{$1$}}

\put(15,0){\tiny{$1$}}

\put(15,25){\tiny{$1$}}

\end{picture}
\]

\caption{}\label{fig-wide-edge}

\end{figure}

\begin{definition}\label{def-MOY-marking}
Let $\Gamma$ be a MOY graph. A marking of $\Gamma$ consists of:
\begin{enumerate}
	\item A finite collection of of marked points on $\Gamma$ such that
  \begin{itemize}
		\item all endpoints are marked,
		\item none of the $2$- or $3$-valent vertices are marked,
		\item every $1$-colored edge contains a marked point\footnote{We consider the initial and terminal points of an edge part of that edge.},
		\item none of the $2$-colored edges contain marked points.
	\end{itemize}
	\item An assignment that assigns to each marked point a single variable such that no two marked points are assigned the same variable.
\end{enumerate}
\end{definition}

Now suppose $\Gamma$ is a MOY graph with a marking. Let $x_1,\dots, x_m$ be all the variables assigned to marked points on $\Gamma$ and $x_{i_1},\dots,x_{i_n}$ all the variables assigned to $1$-valent vertices of $\Gamma$. We define $R$ to be the $\zed^{\oplus 2}$-graded ring $R=\Q[a,x_1,\dots,x_m]$ with the $\zed^{\oplus 2}$-grading given by $\deg a = (2,0)$ and $\deg x_i = (0,2)$. Denote by $R_\partial$ the $\zed^{\oplus 2}$-graded sub-ring $R_\partial=\Q[a,x_{i_1},\dots,x_{i_n}]$ of $R$. we call $R_\partial$ the boundary ring of the marked MOY graph $\Gamma$.

\begin{figure}[ht]

\[
\xymatrix{
\setlength{\unitlength}{1pt}
\begin{picture}(60,45)(-30,-15)

\put(-20,15){\vector(1,0){40}}

\put(-2,17){\tiny{$1$}}

\put(-30,15){\small{$x_i$}}

\put(23,15){\small{$x_k$}}

\put(-4,-15){$\Gamma_{i;k}$}

\end{picture} && \setlength{\unitlength}{1pt}
\begin{picture}(70,45)(-35,-15)

\put(-10,15){\vector(1,0){20}}

\put(-25,0){\vector(1,1){15}}

\put(-25,30){\vector(1,-1){15}}

\put(10,15){\vector(1,-1){15}}

\put(10,15){\vector(1,1){15}}

\put(-2,17){\tiny{$2$}}

\put(-17,0){\tiny{$1$}}

\put(-17,25){\tiny{$1$}}

\put(15,0){\tiny{$1$}}

\put(15,25){\tiny{$1$}}

\put(-35,0){\small{$x_j$}}

\put(-35,25){\small{$x_i$}}

\put(28,0){\small{$x_l$}}

\put(28,25){\small{$x_k$}}

\put(-4,-15){$\Gamma_{i,j;k,l}$}

\end{picture}
}
\]

\caption{}\label{fig-MOY-pieces}

\end{figure}

Next, cut $\Gamma$ at all of its marked points. This breaks $\Gamma$ into simple marked MOY graphs $\Gamma_1,\cdots,\Gamma_p$, each of which is of one of the two types in Figure \ref{fig-MOY-pieces}. Note that each $\Gamma_q$ is marked only at its endpoints. Denote by $R_q$ the $\zed^{\oplus 2}$-graded polynomial ring over $\Q$ generated by $a$ and the variables marking $\Gamma_q$. 

\begin{itemize}
	\item If $\Gamma_q = \Gamma_{i;k}$ in Figure \ref{fig-MOY-pieces}, then $R_q = \Q[a,x_i,x_k]$ and 
	\begin{equation}\label{eq-def-mf-arc}
	\fC_N(\Gamma_q) = (a\cdot\frac{x_k^{N+1}-x_i^{N+1}}{x_k-x_i},x_k-x_i)_{R_q}.
	\end{equation}
	\item If $\Gamma_q = \Gamma_{i,j;k,l}$ in Figure \ref{fig-MOY-pieces}, then $R_q = \Q[a,x_i,x_j,x_k,x_l]$ and
	\begin{equation}\label{eq-def-mf-wide-edge}
	\fC_N(\Gamma_q) = \left(%
  \begin{array}{cc}
  a\cdot\frac{g(x_k+x_l,x_kx_l)-g(x_i+x_j,x_kx_l)}{x_k+x_l-x_i-x_j}, & x_k+x_l-x_i-x_j \\
  a\cdot\frac{g(x_i+x_j,x_kx_l)-g(x_i+x_j,x_ix_j)}{x_kx_l-x_ix_j}, & x_kx_l-x_ix_j
  \end{array}%
  \right)_{R_q}\{0,-1\},
	\end{equation}
	where $g$ is the unique $2$-variable polynomial satisfying $g(x+y,xy)=x^{N+1}+y^{N+1}$.
\end{itemize}

\begin{definition}\label{def-mf-MOY}
\[
\fC_N(\Gamma) = \bigotimes_{q=1}^p (\fC_N(\Gamma_q) \otimes_{R_q} R),
\]
where the big tensor product ``$\bigotimes_{q=1}^p$" is taken over the ring $R=\Q[a,x_1,\dots,x_m]$.

Note that $\fC_N(\Gamma)$ is a $\zed_2\oplus\zed^{\oplus2}$-graded matrix factorization of $w=\sum_{k=1}^n \pm a x_{i_k}^{N+1}$, where the sign is positive if $\Gamma$ points outward at the corresponding endpoint and negative if $\Gamma$ points inward at the corresponding endpoint. 

We view $\fC_N(\Gamma)$ as an object of the category $\hmf^{\mathrm{all}}_{R_\partial,w}$.
\end{definition}

\begin{definition}\label{def-homology-MOY}
A MOY graph is called closed if it has no endpoints. If $\Gamma$ is a closed MOY graph, then $\fC_N(\Gamma)$ is a $\zed_2 \oplus \zed^{\oplus 2}$-graded matrix factorization of $0$. So it is a homologically $\zed_2$-graded chain complex of $\zed^{\oplus 2}$-graded $\Q[a]$-modules with a homogeneous differential map. We denote by $\fH_N(\Gamma)$ the homology of this chain complex. Note that $\fH_N(\Gamma)$ is a $\zed_2 \oplus \zed^{\oplus 2}$-graded $\Q[a]$-module by inheriting the gradings of $\fC_N(\Gamma)$.
\end{definition}

The following two lemmas are slight generalizations of the corresponding results in \cite{KR1,KR2}.

\begin{lemma}\label{lemma-MOY-decomps}\cite[Corollary 5.6, Lemma 3.11 and Proposition 7.1]{Wu-triple-trans}
As matrix factorizations over the respective boundary rings, we have: \vspace{1pc}
\begin{equation}\label{eq-MOY-I}
\fC_N\left(\setlength{\unitlength}{1pt}
\begin{picture}(61,20)(-31,20)

\put(0,10){\vector(0,1){20}}

\put(-30,10){\line(0,1){20}}

\qbezier(0,30)(0,40)(-20,40)

\qbezier(-30,30)(-30,40)(-20,40)

\qbezier(0,10)(0,0)(-20,0)

\qbezier(-30,10)(-30,0)(-20,0)

\put(20,0){\vector(0,1){40}}

\put(3,20){\tiny{$1$}}

\put(15,20){\tiny{$1$}}

\end{picture}\right) \simeq \fC_N\left(\setlength{\unitlength}{1pt}
\begin{picture}(50,20)(-25,20)

\put(-10,5){\line(2,1){10}}

\put(20,0){\vector(-2,1){20}}

\put(0,30){\vector(-2,1){10}}

\put(0,30){\vector(2,1){20}}

\put(0,10){\vector(0,1){20}}

\qbezier(-10,35)(-20,40)(-20,30)

\qbezier(-10,5)(-20,0)(-20,10)

\put(-20,10){\line(0,1){20}}

\put(15,7){\tiny{$1$}}

\put(-17,30){\tiny{$1$}}

\put(15,33){\tiny{$1$}}

\put(3,20){\tiny{${2}$}}

\end{picture}\right)\{0,1\} \oplus \fC_N\left(\setlength{\unitlength}{1pt}
\begin{picture}(10,20)(15,20)

\put(20,0){\vector(0,1){40}}

\put(15,20){\tiny{$1$}}

\end{picture}\right)\left\langle 1\right\rangle\{-1,1-N\},
\end{equation}

\begin{equation}\label{eq-MOY-II}
\fC_N\left(\setlength{\unitlength}{1pt}
\begin{picture}(32,35)(-77,30)

\put(-60,5){\vector(0,1){10}}

\qbezier(-60,15)(-70,15)(-70,25)

\put(-70,25){\vector(0,1){10}}

\qbezier(-70,35)(-70,45)(-60,45)

\qbezier(-60,15)(-50,15)(-50,25)

\put(-50,25){\vector(0,1){10}}

\qbezier(-50,35)(-50,45)(-60,45)

\put(-60,45){\vector(0,1){10}}

\put(-65,48){\tiny{$2$}}

\put(-65,7){\tiny{$2$}}

\put(-75,30){\tiny{$1$}}

\put(-49,30){\tiny{$1$}}

\put(-70,-5){\vector(1,1){10}}

\put(-50,-5){\vector(-1,1){10}}

\put(-60,55){\vector(1,1){10}}

\put(-60,55){\vector(-1,1){10}}

\put(-75,-5){\tiny{$1$}}

\put(-75,60){\tiny{$1$}}

\put(-49,-5){\tiny{$1$}}

\put(-49,60){\tiny{$1$}}

\end{picture}\right) \simeq \fC_N\left(\setlength{\unitlength}{1pt}
\begin{picture}(32,35)(-77,30)

\put(-60,5){\vector(0,1){50}}

\put(-65,30){\tiny{$2$}}

\put(-70,-5){\vector(1,1){10}}

\put(-50,-5){\vector(-1,1){10}}

\put(-60,55){\vector(1,1){10}}

\put(-60,55){\vector(-1,1){10}}

\put(-75,-5){\tiny{$1$}}

\put(-75,60){\tiny{$1$}}

\put(-49,-5){\tiny{$1$}}

\put(-49,60){\tiny{$1$}}

\end{picture}\right)\{0,-1\} \oplus \fC_N\left(\right)\{0,1\},
\end{equation}

\begin{equation}\label{eq-MOY-III}
\fC_N\left(\setlength{\unitlength}{1pt}
\begin{picture}(70,40)(-45,30)

\put(-40,-10){\vector(1,1){20}}

\put(0,-10){\vector(-1,1){20}}

\put(-20,10){\vector(0,1){10}}

\put(-20,20){\vector(0,1){20}}

\put(-20,40){\vector(0,1){10}}

\put(-20,50){\vector(1,1){20}}

\put(-20,50){\vector(-1,1){20}}

\put(20,-10){\vector(0,1){30}}

\put(20,20){\vector(0,1){20}}

\put(20,40){\vector(0,1){30}}

\put(-20,20){\vector(1,0){40}}

\put(20,40){\vector(-1,0){40}}

\put(-18,43){\tiny{$2$}}

\put(-18,30){\tiny{$1$}}

\put(-18,12){\tiny{$2$}}

\put(-1,33){\tiny{$1$}}

\put(-1,22){\tiny{$1$}}

\put(15,50){\tiny{$1$}}

\put(15,30){\tiny{$2$}}

\put(15,-5){\tiny{$1$}}

\put(0,-5){\tiny{$1$}}

\put(-40,-5){\tiny{$1$}}

\put(0,63){\tiny{$1$}}

\put(-40,63){\tiny{$1$}}

\end{picture}\right) \oplus \fC_N\left(\setlength{\unitlength}{1pt}
\begin{picture}(70,40)(-25,30)

\put(40,-10){\vector(-1,1){20}}

\put(0,-10){\vector(1,1){20}}

\put(-20,-10){\vector(0,1){80}}

\put(20,50){\vector(1,1){20}}

\put(20,50){\vector(-1,1){20}}

\put(20,10){\vector(0,1){40}}

\put(-18,30){\tiny{$1$}}

\put(15,30){\tiny{$2$}}

\put(0,-5){\tiny{$1$}}

\put(40,-5){\tiny{$1$}}

\put(0,63){\tiny{$1$}}

\put(40,63){\tiny{$1$}}

\end{picture}\right) \simeq \fC_N\left(\setlength{\unitlength}{1pt}
\begin{picture}(70,40)(-25,30)

\put(40,-10){\vector(-1,1){20}}

\put(0,-10){\vector(1,1){20}}

\put(-20,-10){\vector(0,1){30}}

\put(-20,20){\vector(0,1){20}}

\put(-20,40){\vector(0,1){30}}

\put(20,50){\vector(1,1){20}}

\put(20,50){\vector(-1,1){20}}

\put(20,10){\vector(0,1){10}}

\put(20,20){\vector(0,1){20}}

\put(20,40){\vector(0,1){10}}

\put(20,20){\vector(-1,0){40}}

\put(-20,40){\vector(1,0){40}}

\put(15,43){\tiny{$2$}}

\put(-18,30){\tiny{$2$}}

\put(15,12){\tiny{$2$}}

\put(-1,33){\tiny{$1$}}

\put(-1,22){\tiny{$1$}}

\put(-18,50){\tiny{$1$}}

\put(15,30){\tiny{$1$}}

\put(-18,-5){\tiny{$1$}}

\put(0,-5){\tiny{$1$}}

\put(40,-5){\tiny{$1$}}

\put(0,63){\tiny{$1$}}

\put(40,63){\tiny{$1$}}

\end{picture}\right) \oplus \fC_N\left(\setlength{\unitlength}{1pt}
\begin{picture}(70,40)(-45,30)

\put(-40,-10){\vector(1,1){20}}

\put(0,-10){\vector(-1,1){20}}

\put(-20,10){\vector(0,1){40}}

\put(-20,50){\vector(1,1){20}}

\put(-20,50){\vector(-1,1){20}}

\put(20,-10){\vector(0,1){80}}

\put(-18,30){\tiny{$2$}}

\put(15,30){\tiny{$1$}}

\put(0,-5){\tiny{$1$}}

\put(-40,-5){\tiny{$1$}}

\put(0,63){\tiny{$1$}}

\put(-40,63){\tiny{$1$}}

\end{picture}\right).
\end{equation}\vspace{3pc}
\end{lemma}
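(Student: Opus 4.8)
The plan is to treat the three identities as the $\zed_2\oplus\zed^{\oplus2}$-graded, $a$-deformed analogues of the standard MOY decomposition relations for $\slmf(N)$ matrix factorizations in \cite{KR1,KR2}, and to run those arguments with two changes: the ground ring becomes $\Q[a,\dots]$ throughout, and the potential carries the extra factor $a$, so that every Koszul entry and every grading shift acquires an $a$-component that has to be tracked. Since the building blocks $\fC_N(\Gamma_{i;k})$ and $\fC_N(\Gamma_{i,j;k,l})$ are honest Koszul matrix factorizations by \eqref{eq-def-mf-arc} and \eqref{eq-def-mf-wide-edge}, and $\fC_N$ of a MOY graph is by definition a tensor product of these over a polynomial ring, the whole proof reduces to elementary operations on Koszul matrix factorizations.

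The three tools I would use are: (i) Proposition \ref{prop-contraction-weak}, which excises a Koszul factor $(a_0,a_1)_R$ as soon as one of its entries is a non-zero-divisor modulo the ambient ideal, at the price of the explicit shift $\left\langle 1\right\rangle\{1-\deg_a a_0,\,N+1-\deg_x a_0\}$ (or, dually, no shift when one works with $a_1$); (ii) the homotopy-invariant row operations on a Koszul matrix --- adding an $R$-multiple of one row to another with the compensating column change, and rescaling a row by a unit of $R$ --- which follow from Definition \ref{def-koszul-mf} and the signed Leibniz rule; and (iii) Gaussian elimination, discarding a contractible summand $(u,u^{-1}w)_R$ with $u\in R^{\times}$. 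For \eqref{eq-MOY-I} I would write $\fC_N$ of the left-hand graph as the tensor product, over the polynomial ring in all marked variables, of its single wide-edge factor and its arc factors, then apply (i) repeatedly to kill every arc factor whose entry $x_k-x_i$ is a non-zero-divisor; this removes all internal variables and leaves a small Koszul factorization over the two-variable boundary ring whose entries are built from $a\cdot\frac{x^{N+1}-y^{N+1}}{x-y}$ and $x-y$. Row operations on that factorization then split it as $\fC_N$ of the wide-edge graph shifted by $\{0,1\}$ plus a rank-one remainder of the form $(a\cdot(\text{unit})\cdot x^{N},\,x)_{R_\partial}\left\langle 1\right\rangle$, which is $\fC_N$ of a plain $1$-edge up to the shift $\left\langle 1\right\rangle\{-1,1-N\}$ forced by the bidegrees of the contracted entries. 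For \eqref{eq-MOY-II} I would proceed the same way --- tensor the two wide-edge factors with the arc factors and contract the arcs --- and then observe, exactly as in \cite{KR1}, that the module realizing the inner $(1,1)$-digon is free of rank two over the boundary ring with generators whose $x$-degrees differ by $2$, so that after row operations the survivor is $\fC_N(\text{wide edge})\{0,1\}\oplus\fC_N(\text{wide edge})\{0,-1\}$, with no net $a$-shift because the two digon entries carry $a$-degree $1$ symmetrically.

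For \eqref{eq-MOY-III}, the square move of $R3$ type, the economical route --- once more following \cite{KR1} --- is to feed the already-proved relations \eqref{eq-MOY-I} and \eqref{eq-MOY-II} into each of the three MOY graphs occurring on the two sides, decompose each into a direct sum of the same short list of fully reduced graphs (unions of edges and wide edges with every removable digon and circle stripped off), and then match the multiplicities \emph{together with their bidegree shifts}; the $\slmf(N)$ count in \cite{KR1} supplies the combinatorics, and the only new point is that all the intervening contractions act on entries of equal $a$-degree on both sides, so the $a$-grading is preserved throughout. The step I expect to be the main obstacle is precisely this bidegree bookkeeping: one has to compose the shift built into \eqref{eq-def-mf-wide-edge}, the shifts produced by Proposition \ref{prop-contraction-weak}, and the shift in the Koszul convention of Definition \ref{def-koszul-mf} without losing either $\zed$-component, and one has to keep checking that the maps produced are genuine homotopy equivalences of $\zed_2\oplus\zed^{\oplus2}$-graded matrix factorizations --- homogeneous of degree $0$ in all three gradings --- rather than mere quasi-isomorphisms of the underlying homologies. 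A lesser nuisance is the sign bookkeeping from the signed Leibniz rule when tensor factors get permuted, but that affects only the explicit equivalences and not the statement of Lemma \ref{lemma-MOY-decomps}.
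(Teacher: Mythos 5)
Note first that this paper does not prove Lemma \ref{lemma-MOY-decomps}: it cites it verbatim from \cite{Wu-triple-trans} (Corollary 5.6, Lemma 3.11, Proposition 7.1), so there is no internal proof here to compare against. Your outline for \eqref{eq-MOY-I} and \eqref{eq-MOY-II} does track the method of \cite{Wu-triple-trans} and of \cite{KR1,KR2}: tensor the Koszul blocks, excise internal arcs, perform row operations, Gaussian-eliminate contractible summands, carrying the extra $a$-degree at each step. Your plan for \eqref{eq-MOY-III}, however, does not go through. You propose to apply \eqref{eq-MOY-I} and \eqref{eq-MOY-II} to each of the four graphs, strip removable circles and digons, and then match multiplicities of the leftovers; but all four graphs in \eqref{eq-MOY-III} are open, share the same four-point boundary, and contain no free circle and no standalone digon, so \eqref{eq-MOY-I} and \eqref{eq-MOY-II} are not applicable as local moves on them. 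The actual argument (Proposition 30 of \cite{KR1} and its $a$-deformed analogue in \cite{Wu-triple-trans}) computes the Koszul matrix factorization of the central double-wide-edge graph directly over its polynomial ring and splits it by explicit row operations into the required summands; the relation follows from that single decomposition, not by recursion through circle and digon removal.

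Second, you flag --- correctly --- that Proposition \ref{prop-contraction-weak} yields only quasi-isomorphisms, whereas the lemma asserts homotopy equivalences $\simeq$ of matrix factorizations over the respective boundary rings, and you call this ``the main obstacle'' before setting it aside. It is not a nuisance to be absorbed into bookkeeping: a quasi-isomorphism coming from Proposition \ref{prop-contraction-weak} does not a priori lift to a homotopy equivalence in $\hmf_{R_\partial,w}$. The cited proofs instead use the stronger excision statement at each arc-removal step (the analogue of \cite[Proposition 9]{KR1}, which produces an honest homotopy equivalence when an internal variable is excluded), reserving the weak form quoted here as Proposition \ref{prop-contraction-weak} for computing homology of closed diagrams. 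Without that stronger tool, your sketch can only conclude an isomorphism after passing to homology, which is strictly weaker than what Lemma \ref{lemma-MOY-decomps} claims, so the gap you flag is a real one and needs to be closed rather than acknowledged.
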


\begin{figure}[ht]
$
\xymatrix{
\setlength{\unitlength}{1pt}
\begin{picture}(60,55)(-30,-15)

\put(-20,0){\vector(0,1){40}}

\put(20,0){\vector(0,1){40}}

\put(-17,20){\tiny{$1$}}

\put(15,20){\tiny{$1$}}

\put(-30,35){\small{$x_1$}}

\put(-30,0){\small{$y_2$}}

\put(23,35){\small{$y_1$}}

\put(23,0){\small{$x_2$}}

\put(-4,-15){$\Gamma_0$}

\end{picture} \ar@<8ex>[rr]^{\chi^0} && \setlength{\unitlength}{1pt}
\begin{picture}(60,55)(-30,-15)

\put(-20,0){\vector(2,1){20}}

\put(20,0){\vector(-2,1){20}}

\put(0,30){\vector(-2,1){20}}

\put(0,30){\vector(2,1){20}}

\put(0,10){\vector(0,1){20}}

\put(-17,7){\tiny{$1$}}

\put(15,7){\tiny{$1$}}

\put(-17,33){\tiny{$1$}}

\put(15,33){\tiny{$1$}}

\put(3,20){\tiny{${2}$}}

\put(-30,35){\small{$x_1$}}

\put(-30,0){\small{$y_2$}}

\put(23,35){\small{$y_1$}}

\put(23,0){\small{$x_2$}}

\put(-4,-15){$\Gamma_1$}

\end{picture} \ar@<-6ex>[ll]^{\chi^1}
}
$
\caption{}\label{def-chi-fig}

\end{figure}

\begin{lemma}\cite[Lemma 3.15]{Wu-triple-trans}\label{lemma-def-chi}
Let $\Gamma_0$ and $\Gamma_1$ be the marked MOY graphs in Figure \ref{def-chi-fig}. Then there exist morphisms of $\zed \oplus \zed^{\oplus 2}$-graded matrix factorizations $\fC_N(\Gamma_0) \xrightarrow{\chi^0} \fC_N(\Gamma_1)\{0,-1\}$ and $\fC_N(\Gamma_1) \xrightarrow{\chi^1} \fC_N(\Gamma_0)\{0,-1\}$ satisfying:
\begin{enumerate}
  \item $\chi^0$ and $\chi^1$ are homotopically non-trivial,
	\item $\chi^1 \circ \chi^0 \simeq (x_2-x_1)\id_{\fC_N(\Gamma_0)}$ and $\chi^0 \circ \chi^1 \simeq (x_2-x_1)\id_{\fC_N(\Gamma_1)}$.
\end{enumerate}
Moreover, up to homotopy and scaling, 
\begin{itemize}
	\item $\chi^0$ is the unique homotopically non-trivial morphism of $\zed \oplus \zed^{\oplus 2}$-graded matrix factorizations from $\fC_N(\Gamma_0)$ to $\fC_N(\Gamma_1)\{0,-1\}$,
	\item $\chi^1$ is the unique homotopically non-trivial morphism of $\zed \oplus \zed^{\oplus 2}$-graded matrix factorizations from $\fC_N(\Gamma_1)$ to $\fC_N(\Gamma_0)\{0,-1\}$.
\end{itemize}
\end{lemma}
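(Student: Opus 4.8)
The plan is to reduce everything to explicit Koszul matrix factorizations over the common boundary ring, construct $\chi^0$ and $\chi^1$ as the classical Khovanov--Rozansky ``edge maps'' attached to a crossing, verify (1) and (2) by a direct finite computation, and establish the uniqueness clause by computing the dimension of the relevant graded piece of $\Hom_{\hmf}$. To set up: in both $\Gamma_0$ and $\Gamma_1$ every marked point is an endpoint, so the boundary ring is $R=R_\partial=\Q[a,x_1,y_1,x_2,y_2]$, the potential is $w=a(x_1^{N+1}+y_1^{N+1}-y_2^{N+1}-x_2^{N+1})$, and both $\fC_N(\Gamma_0)$ and $\fC_N(\Gamma_1)$ are finitely generated free $R$-modules. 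By \eqref{eq-def-mf-arc} and Definition \ref{def-mf-MOY}, $\fC_N(\Gamma_0)$ is a length-$2$ Koszul matrix factorization whose ``linear'' column has entries $x_1-y_2$ and $y_1-x_2$; by \eqref{eq-def-mf-wide-edge}, $\fC_N(\Gamma_1)\{0,-1\}$ is a length-$2$ Koszul matrix factorization whose corresponding column has entries $x_1+y_1-y_2-x_2$ and $x_1y_1-x_2y_2$. The key observation is that the second column is obtained from the first by applying the $2\times2$ matrix $A$ over $R$ with rows $(1,1)$ and $(x_2,x_1)$, whose determinant is $x_1-x_2=-(x_2-x_1)$. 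Since $A$ is not invertible over $R$, the two factorizations are genuinely distinct, and the polynomial $x_2-x_1$ appearing in (2) is precisely (a sign times) $\det A$.

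I would then define $\chi^0:\fC_N(\Gamma_0)\to\fC_N(\Gamma_1)\{0,-1\}$ by explicit $R$-matrices built from $A$ and $\det A$ on the underlying exterior-power modules of the Koszul complexes, following the construction of the maps $\chi_0,\chi_1$ in \cite{KR1,Wu-triple-trans}, and $\chi^1$ by the analogous matrices built from the adjugate $\mathrm{adj}(A)$. Commutativity with the differentials is then a direct check; it uses that the two ``$a$-columns'' are matched by the transpose of $A$, an identity forced by the defining property of the polynomial $g$ in \eqref{eq-def-mf-wide-edge}. Preservation of the $\zed_2\oplus\zed^{\oplus2}$-grading holds once the shift $\{0,-1\}$ is inserted, since $\deg_x(x_2-x_1)=2$ is split evenly between $\chi^0$ and $\chi^1$. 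Because $A\cdot\mathrm{adj}(A)=(\det A)\,\id$, the composites $\chi^1\circ\chi^0$ and $\chi^0\circ\chi^1$ equal $(x_2-x_1)\,\id$ up to a sign, which I would absorb into the normalization of $\chi^1$; this gives (2). For (1): if $\chi^0$ or $\chi^1$ were homotopic to $0$, then so would be $(x_2-x_1)\,\id_{\fC_N(\Gamma_0)}$; but multiplication by a ring element is homotopic to $0$ on a Koszul matrix factorization only if that element lies in the ideal generated by the matrix entries, and $x_2-x_1$ does not lie in the ideal generated by the entries of the Koszul matrix defining $\fC_N(\Gamma_0)$ --- indeed, modulo the two linear entries that ideal is generated by elements divisible by $a$, whereas $x_2-x_1$ is not.

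For the uniqueness clause I would compute the $\zed_2\oplus\zed^{\oplus2}$-graded $\Q$-vector space $\Hom_{\hmf}(\fC_N(\Gamma_0),\fC_N(\Gamma_1)\{0,-1\})$ in the single $\zed_2\oplus\zed^{\oplus2}$-degree containing $\chi^0$. Since both factorizations are finite free over $R=R_\partial$, this is a specific graded piece of the homology of the $\zed_2$-graded complex $\fC_N(\Gamma_0)^{\vee}\otimes_R\fC_N(\Gamma_1)$, where $\fC_N(\Gamma_0)^{\vee}=\Hom_R(\fC_N(\Gamma_0),R)$; and by the duality and gluing formalism of \cite{KR1,Wu-triple-trans} this complex is $\fC_N(\widehat\Gamma)$ (up to grading shift) for the closed MOY graph $\widehat\Gamma$ obtained by reversing the orientation of $\Gamma_0$ and gluing it to $\Gamma_1$ along the four endpoints. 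Concretely $\widehat\Gamma$ is a theta graph: two trivalent vertices joined by one $2$-colored edge and two $1$-colored edges. Using the MOY decompositions of Lemma \ref{lemma-MOY-decomps} (and, where convenient, Proposition \ref{prop-contraction-weak}) I would compute $\fH_N(\widehat\Gamma)$ together with all three gradings, and read off that the summand in the degree of $\chi^0$ is one-dimensional over $\Q$; hence any two homotopically non-trivial morphisms in that degree are scalar multiples of each other. The identical computation with the roles of $\Gamma_0$ and $\Gamma_1$ interchanged handles $\chi^1$.

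I expect the main obstacle to be the grading bookkeeping in this last step: tracking the $\zed_2$-, $a$- and $x$-gradings through the base change $A$, the shift $\{0,-1\}$, the passage to the Hom complex and its identification with $\fC_N(\widehat\Gamma)$, and the MOY reductions, so that the graded piece isolating $\chi^0$ is pinned down and seen to be exactly one-dimensional. By contrast, the construction of $\chi^0$ and $\chi^1$ and the verification of the composition identities are a finite explicit computation presenting no real conceptual difficulty.
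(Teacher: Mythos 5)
The paper does not prove this lemma at all; it imports it verbatim as \cite[Lemma 3.15]{Wu-triple-trans}, so there is no in-paper proof to compare against. That said, your outline is the standard Khovanov--Rozansky construction and is essentially correct: both $\fC_N(\Gamma_0)$ and $\fC_N(\Gamma_1)$ are length-two Koszul factorizations over $R_\partial=\Q[a,x_1,y_1,x_2,y_2]$, their ``linear'' columns are indeed related by the matrix $A=\begin{pmatrix}1&1\\ x_2&x_1\end{pmatrix}$ with $\det A = x_1-x_2$, the maps $\chi^0,\chi^1$ built from $A$ and $\mathrm{adj}(A)$ satisfy the composition identities up to a normalization, and the uniqueness clause is a graded-$\Hom_\hmf$ computation that reduces to $\fH_N$ of the theta graph obtained by gluing $\Gamma_0$ (reversed) to $\Gamma_1$.

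Two points are worth tightening. First, your ``the two $a$-columns are matched by the transpose of $A$'' is not literally an equality: the potential constraint only forces $u - A^{T}u'$ to lie in the syzygy module of $(x_1-y_2,\,y_1-x_2)$, i.e.\ to be a multiple of $(y_1-x_2,\,-(x_1-y_2))$; the resulting discrepancy is absorbed by a row operation (an isomorphism of Koszul factorizations) before writing down $\chi^0,\chi^1$ explicitly. Second, the claim that ``multiplication by $r$ is null-homotopic on a Koszul matrix factorization only if $r$ lies in the ideal generated by the entries'' deserves care here, since the full entry sequence of $\fC_N(\Gamma_0)$ is \emph{not} regular (both $a$-entries are multiples of $a$). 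Your conclusion is still correct, but the cleanest way to see non-triviality is to apply the specialization $a\mapsto 1$: this carries $\fC_N$ to the ordinary KR factorization $C_N$ and carries homotopies to homotopies, and $(x_2-x_1)\id_{C_N(\Gamma_0)}$ is known to be homotopically non-trivial from \cite{KR1}. With those two small repairs the argument goes through, and the remaining work is exactly the grading bookkeeping you identify in the $\Hom_\hmf$/theta-graph computation.
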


\subsection{Definition of $\fH_N$} We first define the chain complex associated to a tangle diagram.

\begin{definition}\label{def-marking-tangle}
Let $T$ be an oriented tangle diagram. We call a segment of $T$ between two adjacent crossings/end points an arc. We color all arcs of $T$ by $1$. A marking of $T$ consists of:
\begin{enumerate}
	\item a collections of marked points on $T$ such that
	      \begin{itemize}
	            \item none of the crossings of $T$ are marked, 
	            \item all end points are marked,
	            \item every arc of $T$ contains at least one marked point,
        \end{itemize}
	\item an assignment of pairwise distinct homogeneous variables of bidegree $(0,2)$ to the marked points such that every marked point is assigned a unique variable.
\end{enumerate} 
\end{definition}

Let $T$ be an oriented tangle with a marking. Recall that $a$ is homogeneous of bidegree $(2,0)$. Denote by 
\begin{itemize}
	\item $R$ the polynomial ring over $\Q$ generated by $a$ and all the variables associated to marked points of $T$, 
	\item $R_\partial$ the polynomial ring over $\Q$ generated by $a$ and all the variables associated to end points of $T$.  
\end{itemize}
Again, we call $R_\partial$ the boundary ring of $T$.

Cut $T$ at all of its marked points. This cuts $T$ into a collection $\{T_1,\dots,T_l\}$ of simple tangles, each of which is of one of the three types in Figure \ref{tangle-pieces-fig} and is marked only at its end points. Denote by $R_i$ the polynomial ring over $\Q$ generated by $a$ and the variables marking end points of $T_i$.

\begin{figure}[ht]
$
\xymatrix{
\setlength{\unitlength}{1pt}
\begin{picture}(60,55)(-30,-15)

\put(0,0){\vector(0,1){40}}

\put(3,20){\tiny{$1$}}

\put(-10,35){\small{$x_1$}}

\put(-10,0){\small{$x_2$}}

\put(-4,-15){$A$}

\end{picture} && \setlength{\unitlength}{1pt}
\begin{picture}(60,55)(-30,-15)

\put(-20,0){\vector(1,1){40}}

\put(20,0){\line(-1,1){18}}

\put(-2,22){\vector(-1,1){18}}

\put(-15,36){\tiny{$1$}}

\put(12,36){\tiny{$1$}}

\put(-15,1){\tiny{$1$}}

\put(12,1){\tiny{$1$}}

\put(-30,35){\small{$x_1$}}

\put(-30,0){\small{$y_2$}}

\put(23,35){\small{$y_1$}}

\put(23,0){\small{$x_2$}}

\put(-4,-15){$C_+$}

\end{picture} && \setlength{\unitlength}{1pt}
\begin{picture}(60,55)(-30,-15)

\put(20,0){\vector(-1,1){40}}

\put(-20,0){\line(1,1){18}}

\put(2,22){\vector(1,1){18}}

\put(-15,36){\tiny{$1$}}

\put(12,36){\tiny{$1$}}

\put(-15,1){\tiny{$1$}}

\put(12,1){\tiny{$1$}}

\put(-30,35){\small{$x_1$}}

\put(-30,0){\small{$y_2$}}

\put(23,35){\small{$y_1$}}

\put(23,0){\small{$x_2$}}

\put(-4,-15){$C_-$}

\end{picture} 
}
$
\caption{}\label{tangle-pieces-fig}

\end{figure}

If $T_i=A$, then $R_i =\Q[a,x_1,x_2]$ and $\fC_N(T_i)$ is the chain complex over $\hmf_{R_i,a(x_1^{N+1}-x_2^{N+1})}$ given by
\begin{equation}\label{eq-def-chain-arc}
\fC_N(T_i)= 0 \rightarrow \underbrace{\fC_N(A)}_{0} \rightarrow 0, 
\end{equation}
where the $\fC_N(A)$ on the right hand side is the matrix factorization associated to the MOY graph $A$, and the under-brace indicates the homological grading.

\begin{figure}[ht]
$
\xymatrix{
&& \setlength{\unitlength}{1pt}
\begin{picture}(60,55)(-30,-15)

\put(-20,0){\vector(1,1){40}}

\put(20,0){\line(-1,1){18}}

\put(-2,22){\vector(-1,1){18}}

\put(-15,36){\tiny{$1$}}

\put(12,36){\tiny{$1$}}

\put(-15,1){\tiny{$1$}}

\put(12,1){\tiny{$1$}}

\put(-30,35){\small{$x_1$}}

\put(-30,0){\small{$y_2$}}

\put(23,35){\small{$y_1$}}

\put(23,0){\small{$x_2$}}

\put(-4,-15){$C_+$}

\end{picture}  \ar@<-12ex>[lld]_{0}\ar@<12ex>[rrd]^{+1} && \\
 \setlength{\unitlength}{1pt}
\begin{picture}(60,55)(-30,-15)

\put(-20,0){\vector(0,1){40}}

\put(20,0){\vector(0,1){40}}

\put(-17,20){\tiny{$1$}}

\put(15,20){\tiny{$1$}}

\put(-30,35){\small{$x_1$}}

\put(-30,0){\small{$y_2$}}

\put(23,35){\small{$y_1$}}

\put(23,0){\small{$x_2$}}

\put(-4,-15){$\Gamma_0$}

\end{picture}&&&& \setlength{\unitlength}{1pt}
\begin{picture}(60,55)(-30,-15)

\put(-20,0){\vector(2,1){20}}

\put(20,0){\vector(-2,1){20}}

\put(0,30){\vector(-2,1){20}}

\put(0,30){\vector(2,1){20}}

\put(0,10){\vector(0,1){20}}

\put(-17,7){\tiny{$1$}}

\put(15,7){\tiny{$1$}}

\put(-17,33){\tiny{$1$}}

\put(15,33){\tiny{$1$}}

\put(3,20){\tiny{${2}$}}

\put(-30,35){\small{$x_1$}}

\put(-30,0){\small{$y_2$}}

\put(23,35){\small{$y_1$}}

\put(23,0){\small{$x_2$}}

\put(-4,-15){$\Gamma_1$}

\end{picture} \\
&& \setlength{\unitlength}{1pt}
\begin{picture}(60,55)(-30,-15)

\put(20,0){\vector(-1,1){40}}

\put(-20,0){\line(1,1){18}}

\put(2,22){\vector(1,1){18}}

\put(-15,36){\tiny{$1$}}

\put(12,36){\tiny{$1$}}

\put(-15,1){\tiny{$1$}}

\put(12,1){\tiny{$1$}}

\put(-30,35){\small{$x_1$}}

\put(-30,0){\small{$y_2$}}

\put(23,35){\small{$y_1$}}

\put(23,0){\small{$x_2$}}

\put(-4,-15){$C_-$}

\end{picture} \ar[llu]_{0} \ar[rru]^{-1} &&
}
$
\caption{}\label{crossing-res-fig}

\end{figure}

If $T_i=C_\pm$, then $R_i = \Q[a,x_1,x_2,y_1,y_2]$ and $\fC_N(T_i)$ is the chain complex over $\hmf_{R_i,a(x_1^{N+1}+y_1^{N+1}-x_2^{N+1}-y_2^{N+1})}$ given by
\begin{eqnarray}
\label{eq-def-chain-crossing+} \fC_N(C_+) & = & 0 \rightarrow \underbrace{\fC_N(\Gamma_1)\left\langle 1\right\rangle\{1,N\}}_{-1} \xrightarrow{\chi^1} \underbrace{\fC_N(\Gamma_0)\left\langle 1\right\rangle\{1,N-1\}}_{0} \rightarrow 0, \\
\label{eq-def-chain-crossing-} \fC_N(C_-) & = & 0 \rightarrow \underbrace{\fC_N(\Gamma_0)\left\langle 1\right\rangle\{-1,-N+1\}}_{0} \xrightarrow{\chi^0} \underbrace{\fC_N(\Gamma_1)\left\langle 1\right\rangle\{-1,-N\}}_{1} \rightarrow 0,
\end{eqnarray}
where $\Gamma_0$ and $\Gamma_1$ are the resolutions of $C_\pm$ given in Figure \ref{crossing-res-fig}, the morphisms $\chi^0$ and $\chi^1$ are defined in Lemma \ref{lemma-def-chi} and the under-braces indicate the homological gradings.

Note that, in all three cases, the differential map of $\fC_N(T_i)$ consists of homogeneous morphisms of matrix factorizations preserving the $\zed_2 \oplus \zed^{\oplus 2}$-grading. Of course, this differential map raises the homological grading by $1$.

\begin{definition}\label{def-chain-tangle}
We define the chain complex $\fC_N(T)$ associated to $T$ to be 
\[
\fC_N(T) := \bigotimes_{i=1}^{l} (\fC_N(T_i)\otimes_{R_i} R),
\]
where the big tensor product ``$\bigotimes_{i=1}^{l}$" is taken over $R$. We view $\fC_N(T)$ as a chain complex of $\zed_2\oplus \zed^{\oplus 2}$-graded matrix factorizations over the ring $R_\partial$.

$\fC_N(T)$ is equipped with a $\zed_2\oplus \zed^{\oplus 3}$-grading, where the $\zed_2\oplus \zed^{\oplus 2}$-grading comes from the underlying matrix factorization and the additional $\zed$-grading is the homological grading.

Note that, if $T$ is an oriented link diagram, then $\fC_N(T)$ is a chain complex over the category $\hmf^{\mathrm{all}}_{\Q[a],0}$.
\end{definition}

\begin{lemma}\cite[Lemma 4.5, and Propositions 5.5, 6.1, 7.5]{Wu-triple-trans}\label{lemma-chain-inv}
The homotopy type of $\fC_N(T)$ is independent of the marking of $T$ and invariant under positive Reidemeister move I and braid-like Reidemeister moves II and III.
\end{lemma}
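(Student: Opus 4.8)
The plan is to follow the now-standard Khovanov--Rozansky strategy, adapted to the $\zed_2\oplus\zed^{\oplus 3}$-graded setting over $\Q[a]$: for each local move, resolve the crossings involved, use the MOY decompositions of Lemma~\ref{lemma-MOY-decomps} to split the resulting MOY objects into simpler summands, and then kill the acyclic pieces by Gaussian elimination, checking at every stage that the homotopy equivalences are homogeneous for the $\zed_2$-, $a$-, $x$- and homological gradings. The properties of $\chi^0$ and $\chi^1$ in Lemma~\ref{lemma-def-chi} --- especially $\chi^1\circ\chi^0\simeq(x_2-x_1)\id$ and the uniqueness up to homotopy and scalar of each homotopically nontrivial morphism --- will supply the explicit maps needed to recognize and eliminate those summands.

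First I would establish independence of the marking. It suffices to treat insertion of one extra marked point on a $1$-colored arc, which locally replaces the simple piece $\Gamma_{i;k}$ by $\Gamma_{i;j}$ glued to $\Gamma_{j;k}$; by \eqref{eq-def-mf-arc} this amounts to comparing $\fC_N(\Gamma_{i;k})$ with $\fC_N(\Gamma_{i;j})\otimes_{\Q[a,x_j]}\fC_N(\Gamma_{j;k})$. Since $x_j-x_i$ is not a zero-divisor in the relevant quotient ring, Proposition~\ref{prop-contraction-weak} produces a grading-preserving quasi-isomorphism between them, hence a homotopy equivalence of matrix factorizations over the boundary ring. As any two markings of $T$ are connected by a finite sequence of such insertions/deletions together with renamings of variables, marking independence follows; this also lets us freely normalize markings in the arguments below.

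Next, for the positive Reidemeister~I move I would resolve the single crossing of the kink. One resolution produces a MOY graph containing a local configuration of the left-hand type in \eqref{eq-MOY-I}, which by that relation splits as a straight strand shifted by $\{0,1\}$ plus a straight strand shifted by $\langle1\rangle\{-1,1-N\}$; the other resolution contributes a single straight strand. Tracking the shifts dictated by \eqref{eq-def-chain-crossing+}, the component of the differential hitting the straight-strand summand from the MOY~I splitting is an isomorphism, so Gaussian elimination removes a contractible two-term subcomplex and leaves exactly $\fC_N$ of the unknotted arc with the correct overall shift. This is precisely where positivity of the kink is used: for a negative kink the surviving summand would carry the wrong grading. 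The braid-like Reidemeister~II move is handled the same way: resolving both crossings gives a length-two complex of MOY objects, \eqref{eq-MOY-II} (together with marking independence) splits the doubly-singular term, $\chi^0,\chi^1$ and $\chi^1\chi^0\simeq(x_2-x_1)\id$ identify two acyclic summands, and Gaussian elimination of these leaves the two parallel strands with trivial grading shift.

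The braid-like Reidemeister~III move is the main obstacle. Here I would resolve all three crossings to obtain a cube of MOY resolutions, then repeatedly apply \eqref{eq-MOY-III} (the square-switch relation), along with marking independence and \eqref{eq-MOY-I}--\eqref{eq-MOY-II} where they are needed, to bring the cube into a form admitting a cascade of Gaussian eliminations; after all cancellations the two sides of the move yield homotopy equivalent complexes. The delicate part is the bookkeeping --- the signs in the cube differential and the four gradings through each elimination --- but as in \cite{KR1} this can be organized so that one never handles the full cube at once, cancelling a pair as soon as an isomorphism component of the differential appears. (Alternatively, once Reidemeister~II is in hand one knows $\fC_N(C_+)$ and $\fC_N(C_-)$ are mutually homotopy-inverse, giving an action of the braid group on the relevant homotopy category, and Reidemeister~III reduces to verifying the single relation $\sigma_i\sigma_{i+1}\sigma_i\simeq\sigma_{i+1}\sigma_i\sigma_{i+1}$, which is the cube computation just described.) In every case the maps produced are homogeneous for all the gradings, which yields the graded refinement asserted in the statement.
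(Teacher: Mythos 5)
Your strategy --- resolve crossings, apply the MOY decompositions of Lemma~\ref{lemma-MOY-decomps}, and Gaussian-eliminate acyclic pieces while tracking the $\zed_2$-, homological, $a$- and $x$-gradings --- is exactly the standard Khovanov--Rozansky approach. The present paper does not prove Lemma~\ref{lemma-chain-inv}; it cites it from \cite{Wu-triple-trans}, and your sketch does reconstruct the method used there, so the overall plan is sound.

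One piece of bookkeeping in your Reidemeister~I argument is wrong, however. For a positive kink, the oriented resolution $\Gamma_0$ is the disjoint circle next to a strand, i.e.\ the \emph{left} side of \eqref{eq-MOY-I}, while the wide-edge resolution $\Gamma_1$ is the digon containing the $2$-colored edge, i.e.\ the \emph{first summand} on the right of \eqref{eq-MOY-I} --- not a straight strand. MOY~I splits $\fC_N(\Gamma_0)$ into a digon piece carrying $\{0,1\}$ and a strand piece carrying $\langle 1\rangle\{-1,1-N\}$; the component of the differential from $\fC_N(\Gamma_1)$ into the \emph{digon} piece is the isomorphism that Gaussian elimination cancels, and the surviving object is the strand piece. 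You instead describe both MOY~I summands and the other resolution as straight strands, which misidentifies the objects, although the shape of the argument (two terms cancel, a strand survives) and your observation about where positivity is used are both correct: after the global shifts in \eqref{eq-def-chain-crossing+}, the two digon terms for $C_+$ both carry $\langle 1\rangle\{1,N\}$ and so can cancel, whereas for $C_-$ they carry $\langle1\rangle\{-1,-N+2\}$ and $\langle1\rangle\{-1,-N\}$, an $x$-grading mismatch that blocks cancellation. A second, smaller point: for marking independence, Proposition~\ref{prop-contraction-weak} by itself only produces a quasi-isomorphism; to conclude a homotopy equivalence of matrix factorizations over the boundary ring you need to invoke either the exclusion-of-a-variable lemma of \cite{KR1} directly, or the general fact that between bounded-below homotopically finite matrix factorizations a quasi-isomorphism is a homotopy equivalence. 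That step should be made explicit.
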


Now let $L$ be a link diagram with a marking. Note $\fC_N(L)$ has two differential maps:
\begin{enumerate}
	\item The differential $d_{mf}$ of the underlying matrix factorization structure of $\fC_N(L)$. 
  \item The differential $d_\chi$ from the crossing information given in equations \eqref{eq-def-chain-arc}, \eqref{eq-def-chain-crossing+} and \eqref{eq-def-chain-crossing-}.
\end{enumerate}
As a matrix factorization, $\fC_N(L)$ is a matrix factorization of $0$. So $d_{mf}^2=0$. Thus, the homology $H(\fC_N(L), d_{mf})$ is well defined. In fact, $H(\fC_N(L), d_{mf})$ inherits the $\zed_2\oplus \zed^{\oplus 3}$-grading of $\fC_N(L)$ and $(H(\fC_N(L), d_{mf}),d_\chi)$ is a chain complex with a homological $\zed$-grading of $\zed_2\oplus \zed^{\oplus 2}$-graded $\Q[a]$-modules.

\begin{definition}\label{def-homology-link}
$\fH_N(L) := H(H(\fC_N(L), d_{mf}),d_\chi)$. It is a $\zed_2\oplus \zed^{\oplus 3}$-graded $\Q[a]$-module.
\end{definition}

As a simple corollary of Lemma \ref{lemma-chain-inv}, we have:

\begin{corollary}\label{cor-homology-inv}
The $\zed_2\oplus \zed^{\oplus 3}$-graded $\Q[a]$-module $\fH_N(L)$ is independent of the marking of $L$ and invariant under positive Reidemeister move I and braid-like Reidemeister moves II and III.
\end{corollary}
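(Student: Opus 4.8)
\emph{Proof proposal.}
The plan is to deduce the statement formally from Lemma~\ref{lemma-chain-inv}, using that taking $d_{mf}$-homology is a functor that kills homotopies of matrix factorizations, and that homotopy equivalences of complexes of graded $\Q[a]$-modules induce isomorphisms on homology. The only point requiring a little care is the first of these, so I will isolate it.

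First I would record the elementary observation that $H(-,d_{mf})$ defines a functor from $\hmf^{\mathrm{all}}_{\Q[a],0}$ to the category of $\zed_2\oplus\zed^{\oplus2}$-graded $\Q[a]$-modules: a morphism of matrix factorizations of $0$ induces a homogeneous $\Q[a]$-linear map on $d_{mf}$-homology, and if $f\simeq g$, say $f-g=d_{mf}h+hd_{mf}$, then $f$ and $g$ induce the same map. Hence $H(-,d_{mf})$ is well defined on homotopy classes, i.e.\ on the morphisms of $\hmf^{\mathrm{all}}_{\Q[a],0}$. Applying it termwise to a bounded complex over $\hmf^{\mathrm{all}}_{\Q[a],0}$ produces a bounded complex of $\zed_2\oplus\zed^{\oplus2}$-graded $\Q[a]$-modules; applied to $\fC_N(L)$ with its differential $d_\chi$, this is precisely the complex $(H(\fC_N(L),d_{mf}),d_\chi)$ of Definition~\ref{def-homology-link}. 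Moreover $H(-,d_{mf})$ sends a homotopy equivalence of complexes over $\hmf^{\mathrm{all}}_{\Q[a],0}$ to a homotopy equivalence of the corresponding complexes of graded $\Q[a]$-modules, since it carries chain maps to chain maps and chain homotopies (built from mf-morphisms) to chain homotopies.

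Next, by Lemma~\ref{lemma-chain-inv}, changing the marking of $L$, or applying a positive Reidemeister~I move or a braid-like Reidemeister~II or~III move to $L$, yields a diagram $L'$ together with a homotopy equivalence $\fC_N(L)\simeq\fC_N(L')$ of complexes over $\hmf^{\mathrm{all}}_{\Q[a],0}$; by construction the chain maps and homotopies realizing it are homogeneous with respect to the homological, $\zed_2$-, $a$- and $x$-gradings. Applying $H(-,d_{mf})$ gives a $\Q[a]$-linear, grading-preserving homotopy equivalence
\[
(H(\fC_N(L),d_{mf}),d_\chi)\ \simeq\ (H(\fC_N(L'),d_{mf}),d_\chi)
\]
of complexes of $\zed_2\oplus\zed^{\oplus2}$-graded $\Q[a]$-modules.

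Finally, a homotopy equivalence of complexes of $\Q[a]$-modules induces an isomorphism on homology, so taking $H(-,d_\chi)$ produces an isomorphism $\fH_N(L)\xrightarrow{\ \sim\ }\fH_N(L')$. Since every map involved is $\Q[a]$-linear and preserves the homological, $\zed_2$-, $a$- and $x$-gradings, this isomorphism preserves the full $\zed_2\oplus\zed^{\oplus3}$-graded $\Q[a]$-module structure, which is exactly the assertion. The main (and only) obstacle is the bookkeeping in the first paragraph: one must note that $H(-,d_{mf})$ already factors through $\hmf^{\mathrm{all}}_{\Q[a],0}$, so that a homotopy equivalence in a category whose Hom-sets are themselves homotopy classes of mf-morphisms is enough to conclude; everything after that is formal. \qed
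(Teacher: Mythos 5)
Your proposal is correct and follows the same route the paper intends: the paper simply asserts this is a "simple corollary" of Lemma~\ref{lemma-chain-inv} without writing out details, and your argument is exactly the standard bookkeeping that justifies that assertion, with the one worthwhile remark made explicit that $H(-,d_{mf})$ kills homotopies of matrix factorizations and therefore factors through $\hmf^{\mathrm{all}}_{\Q[a],0}$.
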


Clearly, Theorem \ref{transversal-markov} follows from Lemma \ref{lemma-chain-inv} and Corollary \ref{cor-homology-inv}.

\subsection{The $\slmf(N)$ Khovanov-Rozansky homology $H_N$}\label{subsec-KR-sl-N} If we set $a=1$ in the above construction, then we get the $\slmf(N)$ Khovanov-Rozansky homology $H_N$ define in \cite{KR1}. More precisely, for any tangle $T$, let 
\begin{equation}\label{eq-def-KR-chain}
C_N(T) = \fC_N(T)/(a-1)\fC_N(T).
\end{equation}
Then $C_N(T)$ is the $\slmf(N)$ Khovanov-Rozansky chain complex defined in \cite{KR1}. Note that $C_N(T)$ inherits the $\zed_2$-, homological and $x$-gradings of $\fC_N(T)$. It also inherits the differentials $d_{mf}$ and $d_\chi$. For a link diagram $L$, 
\begin{equation}\label{eq-def-KR-homology}
H_N(L) = H((C_N(L),d_{mf}),d_\chi)
\end{equation}
is the $\slmf(N)$ Khovanov-Rozansky homology defined in \cite{KR1}. $H_N(L)$ inherits the gradings of $C_N(L)$ and is a $\zed_2\oplus \zed^{\oplus 2}$-graded $\Q$-linear space.

The $\zed_2 \oplus \zed$-graded matrix factorization $C_N(\Gamma) = \fC_N(\Gamma)/(a-1)\fC_N(\Gamma)$ of a MOY graph $\Gamma$ satisfies decompositions similar to those in Lemma \ref{lemma-MOY-decomps}.

\begin{lemma}\label{lemma-MOY-decomps-KR}\cite{KR1}
As matrix factorizations over the respective boundary rings, we have: \vspace{1pc}
\begin{equation}\label{eq-MOY-I-KR}
C_N\left(\right) \simeq C_N\left(\setlength{\unitlength}{1pt}
\begin{picture}(50,20)(-25,20)

\put(-10,5){\line(2,1){10}}

\put(20,0){\vector(-2,1){20}}

\put(0,30){\vector(-2,1){10}}

\put(0,30){\vector(2,1){20}}

\put(0,10){\vector(0,1){20}}

\qbezier(-10,35)(-20,40)(-20,30)

\qbezier(-10,5)(-20,0)(-20,10)

\put(-20,10){\line(0,1){20}}

\put(15,7){\tiny{$1$}}

\put(-17,30){\tiny{$1$}}

\put(15,33){\tiny{$1$}}

\put(3,20){\tiny{${2}$}}

\end{picture}\right)\{1\}_x \oplus C_N\left(\right)\left\langle 1\right\rangle\{1-N\}_x,
\end{equation}

\begin{equation}\label{eq-MOY-II-KR}
C_N\left(\setlength{\unitlength}{1pt}
\begin{picture}(32,35)(-77,30)

\put(-60,5){\vector(0,1){10}}

\qbezier(-60,15)(-70,15)(-70,25)

\put(-70,25){\vector(0,1){10}}

\qbezier(-70,35)(-70,45)(-60,45)

\qbezier(-60,15)(-50,15)(-50,25)

\put(-50,25){\vector(0,1){10}}

\qbezier(-50,35)(-50,45)(-60,45)

\put(-60,45){\vector(0,1){10}}

\put(-65,48){\tiny{$2$}}

\put(-65,7){\tiny{$2$}}

\put(-75,30){\tiny{$1$}}

\put(-49,30){\tiny{$1$}}

\put(-70,-5){\vector(1,1){10}}

\put(-50,-5){\vector(-1,1){10}}

\put(-60,55){\vector(1,1){10}}

\put(-60,55){\vector(-1,1){10}}

\put(-75,-5){\tiny{$1$}}

\put(-75,60){\tiny{$1$}}

\put(-49,-5){\tiny{$1$}}

\put(-49,60){\tiny{$1$}}

\end{picture}\right) \simeq C_N\left(\right)\{-1\}_x \oplus C_N\left(\right)\{1\}_x,
\end{equation}

\begin{equation}\label{eq-MOY-III-KR}
C_N\left(\setlength{\unitlength}{1pt}
\begin{picture}(70,40)(-45,30)

\put(-40,-10){\vector(1,1){20}}

\put(0,-10){\vector(-1,1){20}}

\put(-20,10){\vector(0,1){10}}

\put(-20,20){\vector(0,1){20}}

\put(-20,40){\vector(0,1){10}}

\put(-20,50){\vector(1,1){20}}

\put(-20,50){\vector(-1,1){20}}

\put(20,-10){\vector(0,1){30}}

\put(20,20){\vector(0,1){20}}

\put(20,40){\vector(0,1){30}}

\put(-20,20){\vector(1,0){40}}

\put(20,40){\vector(-1,0){40}}

\put(-18,43){\tiny{$2$}}

\put(-18,30){\tiny{$1$}}

\put(-18,12){\tiny{$2$}}

\put(-1,33){\tiny{$1$}}

\put(-1,22){\tiny{$1$}}

\put(15,50){\tiny{$1$}}

\put(15,30){\tiny{$2$}}

\put(15,-5){\tiny{$1$}}

\put(0,-5){\tiny{$1$}}

\put(-40,-5){\tiny{$1$}}

\put(0,63){\tiny{$1$}}

\put(-40,63){\tiny{$1$}}

\end{picture}\right) \oplus C_N\left(\setlength{\unitlength}{1pt}
\begin{picture}(70,40)(-25,30)

\put(40,-10){\vector(-1,1){20}}

\put(0,-10){\vector(1,1){20}}

\put(-20,-10){\vector(0,1){80}}

\put(20,50){\vector(1,1){20}}

\put(20,50){\vector(-1,1){20}}

\put(20,10){\vector(0,1){40}}

\put(-18,30){\tiny{$1$}}

\put(15,30){\tiny{$2$}}

\put(0,-5){\tiny{$1$}}

\put(40,-5){\tiny{$1$}}

\put(0,63){\tiny{$1$}}

\put(40,63){\tiny{$1$}}

\end{picture}\right) \simeq C_N\left(\setlength{\unitlength}{1pt}
\begin{picture}(70,40)(-25,30)

\put(40,-10){\vector(-1,1){20}}

\put(0,-10){\vector(1,1){20}}

\put(-20,-10){\vector(0,1){30}}

\put(-20,20){\vector(0,1){20}}

\put(-20,40){\vector(0,1){30}}

\put(20,50){\vector(1,1){20}}

\put(20,50){\vector(-1,1){20}}

\put(20,10){\vector(0,1){10}}

\put(20,20){\vector(0,1){20}}

\put(20,40){\vector(0,1){10}}

\put(20,20){\vector(-1,0){40}}

\put(-20,40){\vector(1,0){40}}

\put(15,43){\tiny{$2$}}

\put(-18,30){\tiny{$2$}}

\put(15,12){\tiny{$2$}}

\put(-1,33){\tiny{$1$}}

\put(-1,22){\tiny{$1$}}

\put(-18,50){\tiny{$1$}}

\put(15,30){\tiny{$1$}}

\put(-18,-5){\tiny{$1$}}

\put(0,-5){\tiny{$1$}}

\put(40,-5){\tiny{$1$}}

\put(0,63){\tiny{$1$}}

\put(40,63){\tiny{$1$}}

\end{picture}\right) \oplus C_N\left(\setlength{\unitlength}{1pt}
\begin{picture}(70,40)(-45,30)

\put(-40,-10){\vector(1,1){20}}

\put(0,-10){\vector(-1,1){20}}

\put(-20,10){\vector(0,1){40}}

\put(-20,50){\vector(1,1){20}}

\put(-20,50){\vector(-1,1){20}}

\put(20,-10){\vector(0,1){80}}

\put(-18,30){\tiny{$2$}}

\put(15,30){\tiny{$1$}}

\put(0,-5){\tiny{$1$}}

\put(-40,-5){\tiny{$1$}}

\put(0,63){\tiny{$1$}}

\put(-40,63){\tiny{$1$}}

\end{picture}\right).
\end{equation}\vspace{3pc}

In the above, $\{\ast\}_x$ means shifting the $x$-grading by $\ast$.
\end{lemma}

The following invariance theorem for $H_N$ is established in \cite{KR1}.

\begin{theorem}\cite{KR1}
The homotopy type of $C_N(T)$, including its $\zed_2\oplus \zed^{\oplus 2}$-grading, is independent of markings and invariant under all Reidemeister moves. Consequently, every Reidemeister move on $L$ induces an isomorphism of $H_N(L)$ preserving its $\zed_2\oplus \zed^{\oplus 2}$-graded $\Q$-linear space structure.
\end{theorem}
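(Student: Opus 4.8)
The plan is to reduce the statement to the original work of Khovanov and Rozansky \cite{KR1} by verifying that the complex $C_N(T) = \fC_N(T)/(a-1)\fC_N(T)$ of \eqref{eq-def-KR-chain}, together with its $\zed_2 \oplus \zed^{\oplus 2}$-grading (the $\zed_2$-, homological and $x$-gradings that survive the specialization $a=1$) and both of its differentials $d_{mf}$ and $d_\chi$, is literally the Khovanov--Rozansky chain complex constructed in \cite{KR1}. Once this identification is in hand, the assertion is exactly the invariance theorem of \cite{KR1}, and there is nothing further to prove.

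First I would identify the local pieces. Setting $a=1$ in the Koszul matrix factorizations \eqref{eq-def-mf-arc} and \eqref{eq-def-mf-wide-edge} produces matrix factorizations over $\Q[x_i,x_k]$ and $\Q[x_i,x_j,x_k,x_l]$ of the potentials $x_k^{N+1}-x_i^{N+1}$ and $x_k^{N+1}+x_l^{N+1}-x_i^{N+1}-x_j^{N+1}$, with the grading shift $\{0,-1\}$ in \eqref{eq-def-mf-wide-edge} matching; these are precisely the matrix factorizations of the elementary MOY pieces in \cite{KR1}. Since $C_N(\Gamma) = \fC_N(\Gamma)/(a-1)\fC_N(\Gamma)$ is, by Definition \ref{def-mf-MOY}, the tensor product of these local pieces over the appropriate polynomial rings, it coincides with Khovanov--Rozansky's complex of a MOY graph; in particular Lemma \ref{lemma-MOY-decomps-KR} is their MOY decomposition, obtained here by specializing Lemma \ref{lemma-MOY-decomps}. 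Next I would check the morphisms at a crossing: setting $a=1$ in the maps $\chi^0, \chi^1$ of Lemma \ref{lemma-def-chi} gives degree-preserving, homotopically nontrivial morphisms between $C_N(\Gamma_0)$ and $C_N(\Gamma_1)\{0,-1\}$, and the uniqueness-up-to-homotopy-and-scaling clause of Lemma \ref{lemma-def-chi} (equivalently, the corresponding statement in \cite{KR1}) forces them to agree with Khovanov--Rozansky's $\chi_0, \chi_1$ up to an invertible scalar. Hence the one-step complexes \eqref{eq-def-chain-crossing+} and \eqref{eq-def-chain-crossing-} --- with their homological and $\zed^{\oplus 2}$-grading shifts --- reduce to the complexes of \cite{KR1} at a positive and a negative crossing, and assembling them via the tensor product of Definition \ref{def-chain-tangle} identifies $C_N(T)$ with its two differentials with the Khovanov--Rozansky complex.

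With that identification, the independence of $C_N(T)$ from the marking and the invariance of its homotopy type, including the $\zed_2 \oplus \zed^{\oplus 2}$-grading, under all Reidemeister moves is exactly \cite{KR1}; passing to $H((C_N(L), d_{mf}), d_\chi) = H_N(L)$ then yields the induced grading-preserving isomorphisms on homology. As an internal consistency check I would also observe that the chain-homotopy equivalences of Lemma \ref{lemma-chain-inv} are defined over $\Q[a, x_1, \dots]$ and therefore descend modulo $(a-1)$, so independence of markings and invariance under the positive Reidemeister I move and the braid-like Reidemeister II and III moves already follow from Lemma \ref{lemma-chain-inv}; only the negative Reidemeister I move and the non-braid-like II and III moves genuinely require the full force of \cite{KR1}. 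The one content-bearing step in all of this is the first --- verifying that the specialization $a=1$ reproduces Khovanov--Rozansky's matrix factorizations and maps on the nose, with the grading normalization used here matching that of \cite{KR1}. This is a bookkeeping verification rather than a new argument, but it is where the actual work sits, so I would treat it carefully.
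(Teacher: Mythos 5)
Your proposal is correct and takes essentially the same route as the paper, which gives no proof at all but simply cites \cite{KR1} after asserting the identification $C_N(T) = \fC_N(T)/(a-1)\fC_N(T)$ with the Khovanov--Rozansky complex; the substance of your write-up is precisely the bookkeeping behind that citation, namely that setting $a=1$ in \eqref{eq-def-mf-arc}, \eqref{eq-def-mf-wide-edge}, \eqref{eq-def-chain-crossing+} and \eqref{eq-def-chain-crossing-} reproduces the matrix factorizations, $\chi$-morphisms, and grading shifts of \cite{KR1} on the nose. The one small point worth being explicit about is your claim that $\chi^0,\chi^1$ remain homotopically nontrivial after specializing $a=1$: this follows from $\chi^1\circ\chi^0 \simeq (x_2-x_1)\id$, which survives mod $(a-1)$ and is not null-homotopic, so neither factor can become null-homotopic.
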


\section{Graded Module Structure of $\fH_N$}\label{sec-fH-module}

In this section, we study the $\zed_2\oplus \zed^{\oplus 3}$-graded $\Q[a]$-module of $\fH_N$. The goal is to prove Theorem \ref{thm-fH-module}.

\subsection{Resolved braids} In this subsection, we review some basic properties of resolved braids introduced in \cite{Wu5}.

\begin{figure}[ht]
\[
\setlength{\unitlength}{1pt}
\begin{picture}(100,45)(-50,-15)

\put(-50,30){\vector(0,-1){30}}

\put(-40,15){$\dots$}

\put(-20,30){\vector(0,-1){30}}

\put(-10,30){\vector(1,-1){10}}

\put(10,30){\vector(-1,-1){10}}

\put(0,20){\vector(0,-1){10}}

\put(0,10){\vector(1,-1){10}}

\put(0,10){\vector(-1,-1){10}}

\put(50,30){\vector(0,-1){30}}

\put(30,15){$\dots$}

\put(20,30){\vector(0,-1){30}}

\put(-15,25){\tiny{$1$}}

\put(12,25){\tiny{$1$}}

\put(-15,0){\tiny{$1$}}

\put(12,0){\tiny{$1$}}

\put(2,13){\tiny{$2$}}

\put(-4,-15){$\tau_i$}

\put(-48,25){\tiny{$1$}}

\put(-25,25){\tiny{$1$}}

\put(22,25){\tiny{$1$}}

\put(45,25){\tiny{$1$}}

\end{picture}
\]
\caption{}\label{fig-tau-i}

\end{figure}

\begin{definition}\label{def-resolved-braids}
For positive integers $b$, $i$ with $1\leq i\leq b-1$, let $\tau_i$ be the MOY graph depicted in Figure \ref{fig-tau-i}. That is, from left to right, $\tau_i$ consists of $i-1$ downward $1$-colored edges, then a downward $2$-colored edge with two $1$-colored edges entering through the top and two $1$-colored edges exiting through the bottom, and then $b-i-1$ more downward $1$-colored edges. 

We use $(\tau_{i_1}\cdots\tau_{i_m})_b$ to represent the MOY graph formed by stacking the graphs $\tau_{i_1},\dots,\tau_{i_m}$ together vertically from top to bottom with the bottom end points of $\tau_{i_l}$ identified with the corresponding top end points of $\tau_{i_{l+1}}$. We call $(\tau_{i_1}\cdots\tau_{i_m})_b$ a resolved braid of $b$-strands. If the number of strands is clear from the context, then we drop the lower index $b$ and simply write $\tau_{i_1}\cdots\tau_{i_m}$.

Denote by $\overline{(\tau_{i_1}\cdots\tau_{i_m})_b}$ the closed MOY graph obtained from $(\tau_{i_1}\cdots\tau_{i_m})_b$ by attaching a $1$-colored edge from each end point at the bottom to the corresponding end point at the top. We call $\overline{(\tau_{i_1}\cdots\tau_{i_m})_b}$ a closed resolved braid of $b$-strands. Again, if the number of strands is clear from the context, then we drop the lower index $b$ and simply write $\overline{\tau_{i_1}\cdots\tau_{i_m}}$.

We use $(\emptyset)_b$ to represent $b$ vertical downward $1$-colored edges, and, therefore, $\overline{(\emptyset)_b}$ represents $b$ concentric $1$-colored circles. Again, if the number of strands is clear from the context, then we drop the lower index $b$.
\end{definition}

\begin{figure}[ht]
\[
\xymatrix{
\setlength{\unitlength}{1pt}
\begin{picture}(60,145)(-30,-35)

\put(-30,90){\vector(1,-1){10}}

\put(-10,90){\vector(-1,-1){10}}

\put(-20,80){\vector(0,-1){10}}

\put(-20,70){\line(-1,-1){10}}

\put(-30,60){\vector(0,-1){60}}

\put(-20,70){\vector(1,-1){20}}

\put(10,60){\vector(-1,-1){10}}

\put(10,90){\line(0,-1){30}}

\put(0,50){\vector(0,-1){10}}

\put(0,40){\line(-1,-1){10}}

\put(-10,30){\vector(0,-1){30}}

\put(0,40){\vector(1,-1){20}}

\put(30,90){\line(0,-1){60}}

\put(30,30){\vector(-1,-1){10}}

\put(20,20){\vector(0,-1){10}}

\put(20,10){\vector(-1,-1){10}}

\put(20,10){\vector(1,-1){10}}

\put(-32,85){\tiny{$1$}}

\put(-28,5){\tiny{$1$}}

\put(-8,85){\tiny{$1$}}

\put(-8,62){\tiny{$1$}}

\put(-8,5){\tiny{$1$}}

\put(7,5){\tiny{$1$}}

\put(7,33){\tiny{$1$}}

\put(7,85){\tiny{$1$}}

\put(27,4){\tiny{$1$}}

\put(27,85){\tiny{$1$}}

\put(-18,73){\tiny{$2$}}

\put(2,43){\tiny{$2$}}

\put(22,13){\tiny{$2$}}

\put(-10,-35){$(\tau_1\tau_2\tau_3)_4$}

\end{picture} && \setlength{\unitlength}{1pt}
\begin{picture}(140,145)(-30,-35)

\put(-30,90){\vector(1,-1){10}}

\put(-10,90){\vector(-1,-1){10}}

\put(-20,80){\vector(0,-1){10}}

\put(-20,70){\line(-1,-1){10}}

\put(-30,60){\vector(0,-1){60}}

\put(-20,70){\vector(1,-1){20}}

\put(10,60){\vector(-1,-1){10}}

\put(10,90){\line(0,-1){30}}

\put(0,50){\vector(0,-1){10}}

\put(0,40){\line(-1,-1){10}}

\put(-10,30){\vector(0,-1){30}}

\put(0,40){\vector(1,-1){20}}

\put(30,90){\line(0,-1){60}}

\put(30,30){\vector(-1,-1){10}}

\put(20,20){\vector(0,-1){10}}

\put(20,10){\vector(-1,-1){10}}

\put(20,10){\vector(1,-1){10}}

\put(-32,85){\tiny{$1$}}

\put(-28,5){\tiny{$1$}}

\put(-8,85){\tiny{$1$}}

\put(-8,62){\tiny{$1$}}

\put(-8,5){\tiny{$1$}}

\put(7,5){\tiny{$1$}}

\put(7,33){\tiny{$1$}}

\put(7,85){\tiny{$1$}}

\put(27,4){\tiny{$1$}}

\put(27,85){\tiny{$1$}}

\put(-18,73){\tiny{$2$}}

\put(2,43){\tiny{$2$}}

\put(22,13){\tiny{$2$}}

\put(30,0){\line(1,0){20}}

\put(30,90){\line(1,0){20}}

\put(50,0){\line(0,1){90}}

\put(10,0){\line(0,-1){5}}

\put(10,90){\line(0,1){5}}

\put(10,-5){\line(1,0){60}}

\put(10,95){\line(1,0){60}}

\put(70,-5){\line(0,1){100}}

\put(-10,0){\line(0,-1){10}}

\put(-10,90){\line(0,1){10}}

\put(-10,-10){\line(1,0){100}}

\put(-10,100){\line(1,0){100}}

\put(90,-10){\line(0,1){110}}

\put(-30,0){\line(0,-1){15}}

\put(-30,90){\line(0,1){15}}

\put(-30,-15){\line(1,0){140}}

\put(-30,105){\line(1,0){140}}

\put(110,-15){\line(0,1){120}}

\put(30,-35){$\overline{(\tau_1\tau_2\tau_3)_4}$}

\end{picture}
}
\]
\caption{}\label{fig-resolved-braid-example}

\end{figure}

\begin{remark}\label{remark-resolved-braids}
\begin{enumerate}
	\item Comparing Definition \ref{def-resolved-braids} to the resolutions in Figure \ref{crossing-res-fig}, one can see that, if we choose a resolution for every crossing in a (closed) braid, then we get a (closed) resolved braid as defined in Definition \ref{def-resolved-braids}.
	\item  There are two obvious types of isotopies of resolved braids and closed resolved braids:
     \begin{itemize}
	      \item[I$_1$:] If $|i-j|>1$, then $\tau_i\tau_j$ is isotopic to $\tau_j\tau_i$;
				\item[I$_2$:] If $\mu$ and $\nu$ are two words in $\tau_{1},\dots,\tau_{b-1}$, then $\overline{\mu\nu}$ is isotopic to
$\overline{\nu\mu}$.
     \end{itemize}
\end{enumerate}
\end{remark}

\begin{definition}\label{def-weight}
We define the weight of the closed resolved braid $\overline{\tau_{i_1}\cdots\tau_{i_m}}$ to be $w(\overline{\tau_{i_1}\cdots\tau_{i_m}}) = i_1 + \cdots + i_m$.
\end{definition}

In \cite{Wu5}, the author introduced a scheme to perform inductive arguments on the weights of closed resolved braids using the decompositions in Lemma \ref{lemma-MOY-decomps-KR}. The key to this scheme is Corollary \ref{cor-resolved-braid-induction} below, which is a simple consequence of Lemma \ref{lemma-resolved-braid-I-1}.

\begin{lemma}\cite[Lemma 3.5]{Wu5}\label{lemma-resolved-braid-I-1}
Let $\mu=\tau_{i_1}\cdots\tau_{i_m}$ be a resolved braid with $b$ strands satisfying: 
\begin{itemize}
	\item $m\geq2$,
	\item $i_1=i_m=i$,
	\item $i_l<i$ for $1<l<m$.
\end{itemize}
Then, via a finite sequence of isotopies of type I$_1$, $\mu$ is isotopic to a resolved braid $\mu'$ that contains a segment of the form $\tau_j\tau_j$ or $\tau_j\tau_{j-1}\tau_j$ for some $j\leq i$.
\end{lemma}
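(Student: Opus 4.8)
The plan is to proceed by induction on the index $i$, sliding one of the two boundary copies of $\tau_i$ inward by I$_1$ commutations until it either meets the other copy of $\tau_i$ or is blocked by an intermediate $\tau_{i-1}$.

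First I would dispose of the base case $i=1$: here the hypothesis $i_l<i$ for $1<l<m$ is vacuous, so $m=2$ and $\mu=\tau_1\tau_1$, which already contains the segment $\tau_j\tau_j$ with $j=1\le i$. For the inductive step, write $\mu=\tau_i\tau_{i_2}\cdots\tau_{i_{m-1}}\tau_i$ with $i_l<i$ for $1<l<m$, and split on whether the value $i-1$ occurs among $i_2,\dots,i_{m-1}$. If it does not, every intermediate index is $\le i-2$, so $|i-i_l|>1$ for all such $l$, and repeated I$_1$ moves slide the leftmost $\tau_i$ all the way to the right past $\tau_{i_2},\dots,\tau_{i_{m-1}}$, yielding $\tau_{i_2}\cdots\tau_{i_{m-1}}\tau_i\tau_i$, which contains the segment $\tau_i\tau_i$.

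If $i-1$ does occur among the intermediate indices, I would let $l_1<l_2<\cdots$ enumerate the positions $l\in(1,m)$ with $i_l=i-1$. When $l_1$ is the only such position, all other intermediate indices are $\le i-2$, so one can slide the leftmost $\tau_i$ rightward until it abuts $\tau_{i_{l_1}}=\tau_{i-1}$ and, separately, the rightmost $\tau_i$ leftward until it abuts $\tau_{i_{l_1}}$ on the other side; the result contains $\tau_i\tau_{i-1}\tau_i$. When there are at least two such positions, take the first two $l_1<l_2$: if $l_2=l_1+1$ the subword $\tau_{i_{l_1}}\tau_{i_{l_2}}=\tau_{i-1}\tau_{i-1}$ is already of the desired form, and if $l_2>l_1+1$ the contiguous subword $\nu=\tau_{i_{l_1}}\cdots\tau_{i_{l_2}}$ is a resolved braid of $b$ strands whose first and last letters are $\tau_{i-1}$ and whose intermediate indices are all $<i$ and $\ne i-1$, hence $\le i-2$. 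Thus $\nu$ satisfies the hypotheses of the Lemma with $i$ replaced by $i-1$, and the inductive hypothesis produces an I$_1$-isotopy of $\nu$ creating a segment $\tau_j\tau_j$ or $\tau_j\tau_{j-1}\tau_j$ with $j\le i-1\le i$; since $\nu$ is a contiguous subword of $\mu$ and I$_1$ moves are local, this isotopy lifts to an I$_1$-isotopy of $\mu$ giving the desired $\mu'$.

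The sliding steps are routine. The one place that needs genuine care --- and the main obstacle --- is the bookkeeping that makes the recursion legitimate: one must use the \emph{first two} occurrences of $\tau_{i-1}$ so that the intermediate indices of $\nu$ are truly $\le i-2$ (which is what licenses the inductive hypothesis at level $i-1$), one must note that $i$ strictly decreases so the induction terminates, and one must check in every branch that the index $j$ of the segment produced satisfies $j\le i$ (and $j\ge 2$ in the three-letter case, which holds because the presence of a $\tau_{i-1}$ forces $i\ge 2$).
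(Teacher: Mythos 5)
Your proof is correct. Since the paper does not reproduce the argument — it cites \cite[Lemma 3.5]{Wu5} — there is no in-paper proof to compare against, but your inductive scheme is sound: induction on $i$ with base case $i=1$ forcing $m=2$; when no intermediate index equals $i-1$, slide a boundary $\tau_i$ across the interior (all commutations legal since all gaps are $\ge 2$) to form $\tau_i\tau_i$; when exactly one index equals $i-1$, slide both boundary $\tau_i$'s inward to form $\tau_i\tau_{i-1}\tau_i$; when at least two do, recurse on the contiguous subword $\nu$ between the first two occurrences. The bookkeeping you flag is exactly the crux and you handle it correctly — using the \emph{first two} occurrences guarantees $\nu$'s intermediate indices are $\le i-2$, which is what licenses the inductive hypothesis at level $i-1$; the index strictly decreases so the recursion terminates; locality of I$_1$ lets the isotopy of $\nu$ extend to $\mu$; and $j\le i$ holds in every branch. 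This is essentially the natural and, as far as I can tell, the intended argument.
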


\begin{corollary}\label{cor-resolved-braid-induction}
Let $\overline{\mu}$ be a closed resolved braids with $b$ strands. Then, via a finite sequence of isotopies of types I$_1$ and I$_2$, $\overline{\mu}$ is isotopic to a closed resolved braid of one of the following three types:
\begin{enumerate}[(a)]
	\item $\overline{\tau_{i_1}\cdots\tau_{i_m}\tau_i}$, where $i>i_1,\dots,i_m$;
	\item $\overline{\tau_{i_1}\cdots\tau_{i_m}\tau_j\tau_j}$;
	\item $\overline{\tau_{i_1}\cdots\tau_{i_m}\tau_j\tau_{j-1}\tau_j}$.
\end{enumerate}
\end{corollary}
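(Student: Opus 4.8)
The plan is to deduce this directly from Lemma~\ref{lemma-resolved-braid-I-1} by a single case split on how often the largest index used in $\mu$ occurs; no induction on the weight is needed at this stage. Write $\mu=\tau_{i_1}\cdots\tau_{i_m}$ and put $i=\max\{i_1,\dots,i_m\}$. We may assume $\mu$ is nonempty, and note that when $m=1$ the braid $\overline{\tau_{i_1}}$ is already of type~(a) with empty prefix. \emph{Case 1: $\tau_i$ occurs exactly once in $\mu$.} A single isotopy of type~I$_2$ rotates the cyclic word so that this occurrence becomes the last letter, and then every remaining letter has index $<i$, so $\overline{\mu}$ has been brought to type~(a).

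\emph{Case 2: $\tau_i$ occurs at least twice.} Reading $\mu$ cyclically, the occurrences of $\tau_i$ divide the cyclic word into arcs; no $\tau_i$ lies in the interior of an arc, so by maximality of $i$ every letter in every arc has index $<i$. Choosing one such arc and applying an isotopy of type~I$_2$, I would arrange that $\mu=\tau_i\,\tau_{l_1}\cdots\tau_{l_p}\,\tau_i\,\tau_{k_1}\cdots\tau_{k_r}$ with $l_1,\dots,l_p<i$ (here $p,r\geq 0$ and the $k$'s are unconstrained). The initial sub-word $\nu:=\tau_i\tau_{l_1}\cdots\tau_{l_p}\tau_i$ is a resolved braid on $b$ strands meeting the hypotheses of Lemma~\ref{lemma-resolved-braid-I-1} (length $\geq 2$, first and last letters equal to $i$, interior letters $<i$), so finitely many type~I$_1$ isotopies carry $\nu$ to a resolved braid $\nu'$ containing a contiguous segment $S$ equal to $\tau_j\tau_j$ or $\tau_j\tau_{j-1}\tau_j$. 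Each of those I$_1$ moves only transposes two neighbouring letters inside $\nu$, hence is equally an I$_1$ isotopy of $\mu$; so $\mu$ is carried to $\mu'=\nu'\,\tau_{k_1}\cdots\tau_{k_r}$, which still contains $S$. Writing $\mu'=P\,S\,Q$ and applying one type~I$_2$ isotopy yields $\overline{\mu'}=\overline{Q\,P\,S}$, which is of type~(b) if $S=\tau_j\tau_j$ and of type~(c) if $S=\tau_j\tau_{j-1}\tau_j$.

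There is no substantive obstacle here; the one point that must be gotten right, in Case~2, is that the type~I$_1$ isotopies supplied by Lemma~\ref{lemma-resolved-braid-I-1} for the open resolved braid $\nu$ really are isotopies of the ambient closed resolved braid $\overline{\mu}$ — this is immediate because an I$_1$ move is local, affecting only two adjacent $\tau$'s and leaving everything else fixed. The remainder is just bookkeeping with cyclic rotations (type~I$_2$) to move the chosen letter, or the segment $S$, to the end of the word. One should also record the degenerate sub-cases ($p=0$, where $\mu$ already contains $\tau_i\tau_i$ and the Lemma is not even needed; $P$ or $Q$ empty), but all of these fit the stated forms (a)--(c) without modification.
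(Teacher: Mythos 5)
The paper offers no proof of this corollary; it is merely asserted to be a ``simple consequence'' of Lemma~\ref{lemma-resolved-braid-I-1}. Your argument is correct and is the natural way to fill that gap: Case~1 (the maximal index occurs once) is immediate, and in Case~2 you rotate (I$_2$) so that a cyclic arc $\tau_i\tau_{l_1}\cdots\tau_{l_p}\tau_i$ with all $l_j<i$ becomes a prefix, apply Lemma~\ref{lemma-resolved-braid-I-1} to that prefix, note correctly that I$_1$ moves are local and hence transport from the prefix to the whole word, and finish with one more I$_2$ to push the segment $\tau_j\tau_j$ or $\tau_j\tau_{j-1}\tau_j$ to the end. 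Your handling of the degenerate sub-cases ($m=1$, $p=0$, empty $P$ or $Q$) is also right, and the implicit restriction to nonempty $\mu$ matches how the corollary is actually invoked in the proof of Lemma~\ref{lemma-module-homology-resolved-braid}, where weight~$0$ is treated as a separate base case.
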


\subsection{Homology of closed resolve braids} In this subsection, we study the $\Q[a]$-module structure of the homology of closed resolved braids. The goal is to establish Lemma \ref{lemma-module-homology-resolved-braid} below.

\begin{lemma}\label{lemma-homology-empty-braid}
Let $\overline{(\emptyset})_b$ be the closed resolved braid with $b$-strands corresponding to the empty word, that is, the MOY graph consisting of $b$ concentric $1$-colored circles. Define the $\zed_2 \oplus \zed^{\oplus 2}$-graded $\Q[a]$-modules $\mathcal{M}_0$, $\mathcal{M}_1$ and $\mathcal{M}_\infty$ by
\begin{eqnarray*}
\mathcal{M}_0 & := & \Q[a] \left\langle 1 \right\rangle \{-1, 1-N\} \oplus \Q[a], \\
\mathcal{M}_1 & := & \bigoplus_{l=0}^{N-1} \Q[a] \left\langle 1 \right\rangle \{-1, 1-N+2l\}, \\
\mathcal{M}_\infty & := & \bigoplus_{l=N}^{\infty} \Q[a]/(a) \left\langle 1 \right\rangle \{-1, 1-N+2l\}.
\end{eqnarray*}
Then, as a $\zed_2 \oplus \zed^{\oplus 2}$-graded $\Q[a]$-module,
\[
\fH_N(\overline{(\emptyset})_b) \cong \mathcal{M}_1 ^{\otimes b} \oplus \left( \bigoplus_{j=0}^{b-1} \mathcal{M}_0^{\otimes j} \otimes \mathcal{M}_1^{\otimes (b-1-j)} \right) \otimes \mathcal{M}_\infty,
\]
where all the tensor products are over $\Q[a]$.
\end{lemma}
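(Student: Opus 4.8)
The plan is to compute $\fH_N(\overline{(\emptyset)}_b)$ by understanding the matrix factorization $\fC_N(\overline{(\emptyset)}_b)$ as a tensor product of the single-circle factorizations and then taking homology. First I would recall that $\overline{(\emptyset)}_b$ is a disjoint union of $b$ $1$-colored circles, so by the definition of $\fC_N$ for MOY graphs (Definition \ref{def-mf-MOY}) and the multiplicativity of the matrix factorization construction under disjoint union, $\fC_N(\overline{(\emptyset)}_b) \simeq \fC_N(\bigcirc)^{\otimes b}$ over $\Q[a]$, and hence $\fH_N(\overline{(\emptyset)}_b) \cong \fH_N(\bigcirc)^{\otimes b}$ as $\zed_2 \oplus \zed^{\oplus 2}$-graded $\Q[a]$-modules, provided we check that each $\fH_N(\bigcirc)$ is a free $\Q[a]$-module in each homological/$x$-degree so that a Künneth argument applies without $\mathrm{Tor}$ terms — here the point is that $\fC_N(\bigcirc)$ is a Koszul-type factorization over $\Q[a]$ and its homology splits as a free part plus torsion, and the torsion tensored into the rest is controlled.

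The heart of the computation is the single circle: $\fH_N(\bigcirc)$. I would compute it directly from the Koszul matrix factorization in \eqref{eq-def-mf-arc}. A circle made from one arc $\Gamma_{i;i}$ (closing $x_i$ to itself) gives, after passing to the closed graph, the factorization $(a\cdot\frac{x^{N+1}-y^{N+1}}{x-y}, x-y)$ over $\Q[a,x,y]$ with $y$ then identified — more carefully, one uses Proposition \ref{prop-contraction-weak} to contract the variable, reducing the two-variable Koszul complex to a one-variable situation. The differential $d_{mf}$ becomes multiplication by $x - y$ on one term and by $a \cdot \frac{x^{N+1}-y^{N+1}}{x-y}$ on the other; setting $x = y$ (the contraction) the first map becomes zero and the second becomes $a \cdot (N+1) x^N$, so the homology is $\Q[a,x]/(a(N+1)x^N) \cong \Q[a,x]/(a x^N)$ in one $\zed_2$-degree and $\Q[a,x] / (\text{something}) $ — actually I expect the homology to be $\ker / \im$ giving $\Q[a,x]/(x-y, \ldots)$. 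Unwinding, $\fH_N(\bigcirc)$ as a $\Q[a]$-module is $\Q[a,x]/(ax^N)$ placed in the appropriate $\zed_2\oplus\zed^{\oplus 2}$-degree: as a $\Q[a]$-module this is $\bigoplus_{l=0}^{N-1}\Q[a]\cdot x^l \ \oplus\ \bigoplus_{l \geq N} (\Q[a]/(a))\cdot x^l$, with grading shifts $\langle 1\rangle\{-1, 1-N+2l\}$ matching the definitions of $\mathcal{M}_1$ (the free part, $0 \leq l \leq N-1$) and $\mathcal{M}_\infty$ (the torsion part, $l \geq N$). So $\fH_N(\bigcirc) \cong \mathcal{M}_1 \oplus \mathcal{M}_\infty$. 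I should double-check the exact degree shifts against the conventions in \eqref{eq-def-mf-arc} and Definition \ref{def-koszul-mf}; $\mathcal{M}_0$ should appear as the homology of the "wide-edge-contracted" or MOY-I type piece, i.e. $\fH_N$ of a $2$-colored circle or equivalently the two-term summand coming from \eqref{eq-MOY-I}, and its role is bookkeeping the interaction between the free and torsion parts.

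Finally I would assemble the $b$-fold tensor product. Write $\fH_N(\bigcirc) = F \oplus T$ with $F \cong \mathcal{M}_1$ free and $T \cong \mathcal{M}_\infty$ torsion (all $= \Q[a]/(a)$ up to shifts). Then over $\Q[a]$,
\[
\fH_N(\overline{(\emptyset)}_b) \cong (F \oplus T)^{\otimes b} \cong F^{\otimes b} \oplus \bigoplus_{k=1}^{b} \bigoplus_{\substack{S \subseteq \{1,\dots,b\} \\ |S| = k}} \left(\bigotimes_{i \notin S} F\right) \otimes \left(\bigotimes_{i \in S} T\right),
\]
and the key algebraic fact is $T \otimes_{\Q[a]} M \cong M / aM$ for any $M$, so a tensor product containing $k \geq 1$ torsion factors collapses: $\bigotimes_{i \in S} T \cong \Q[a]/(a)$ up to shifts (since $(\Q[a]/(a))^{\otimes k} \cong \Q[a]/(a)$), and tensoring the remaining $F$'s with it gives $(\mathcal{M}_1/a\mathcal{M}_1)^{\otimes (b-k)} \otimes \mathcal{M}_\infty$-type terms. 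Reindexing $j = b - k$ (number of free factors, $0 \le j \le b-1$ when at least one torsion factor is present) and $j = b$ for the purely free term $\mathcal{M}_1^{\otimes b}$, and absorbing the shift bookkeeping, recovers exactly the claimed formula $\mathcal{M}_1^{\otimes b} \oplus \big(\bigoplus_{j=0}^{b-1} \mathcal{M}_0^{\otimes j} \otimes \mathcal{M}_1^{\otimes(b-1-j)}\big) \otimes \mathcal{M}_\infty$ — where the appearance of $\mathcal{M}_0$ rather than $\mathcal{M}_1/a\mathcal{M}_1$ reflects that $\mathcal{M}_0 \cong \mathcal{M}_1/a\mathcal{M}_1$ as graded $\Q[a]$-modules once $a$ acts as zero (one checks $\dim$ and shifts: $\mathcal{M}_0$ has two $\Q$-generators in degrees matching $\mathcal{M}_1 / a \mathcal{M}_1$ after the collapse forced by the torsion factor). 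The main obstacle I anticipate is purely bookkeeping: getting all the $\zed_2$-shifts $\langle 1 \rangle$ and $\zed^{\oplus 2}$-shifts $\{-1, 1-N+2l\}$ to line up correctly through the Künneth isomorphism and the $T \otimes (-) = (-)/a(-)$ collapse, and correctly identifying why the surviving "free-part" factors in a torsion term are recorded via $\mathcal{M}_0$ — this requires carefully tracking the grading conventions from \eqref{eq-def-mf-arc}, Proposition \ref{prop-contraction-weak}, and the decomposition \eqref{eq-MOY-I}, rather than any deep argument.
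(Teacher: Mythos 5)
Your computation of the single circle is correct: $\fH_N(\bigcirc)\cong\Q[a,x]/(ax^N)\langle1\rangle\{-1,1-N\}\cong\mathcal{M}_1\oplus\mathcal{M}_\infty$, and this is the base case of the paper's induction. But the rest of the argument has two genuine gaps, and the second one signals that the overall strategy cannot produce the stated answer.

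First, the isomorphism $\fH_N(\overline{(\emptyset)}_b)\cong\fH_N(\bigcirc)^{\otimes b}$ is false. The chain-level statement $\fC_N(\overline{(\emptyset)}_b)\cong\fC_N(\bigcirc)^{\otimes b}$ is fine, but $\fH_N(\bigcirc)$ has $a$-torsion ($\mathcal{M}_\infty$), so the K\"unneth formula for $\Q[a]$-complexes produces a nontrivial $\mathrm{Tor}_1$ contribution which you explicitly say you want to avoid. The $\mathrm{Tor}$ terms are not optional here --- they are where the $\mathcal{M}_0$ summands come from. For a concrete check with $b=2$: $\fH_N(\bigcirc)$ lives entirely in $\zed_2$-degree $1$, so $\fH_N(\bigcirc)^{\otimes 2}$ lives entirely in $\zed_2$-degree $0$. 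But the claimed answer $\mathcal{M}_1^{\otimes 2}\oplus(\mathcal{M}_1\oplus\mathcal{M}_0)\otimes\mathcal{M}_\infty$ has a nonzero $\zed_2$-degree $1$ part coming from the $\Q[a]$ summand of $\mathcal{M}_0$ (and a direct computation of $\fC_N(\overline{(\emptyset)_2})$ confirms that the actual homology is nonzero in $\zed_2$-degree $1$). So the pure tensor $(F\oplus T)^{\otimes b}$ simply does not equal the stated homology.

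Second, the identification $\mathcal{M}_0\cong\mathcal{M}_1/a\mathcal{M}_1$ used to convert your answer into the stated one is false. $\mathcal{M}_0=\Q[a]\langle1\rangle\{-1,1-N\}\oplus\Q[a]$ is free of rank $2$ with one generator in each $\zed_2$-degree; $\mathcal{M}_1/a\mathcal{M}_1$ is an $N$-dimensional $\Q$-space concentrated in $\zed_2$-degree $1$. They do not even have matching dimensions for $N\neq 2$, and for $N=2$ the $\zed_2$- and $x$-gradings still disagree. (Moreover, your K\"unneth-without-$\mathrm{Tor}$ expansion gives $\binom{b}{k}$ copies of $\mathcal{M}_1^{\otimes(b-k)}\otimes\mathcal{M}_\infty^{\otimes k}$, which cannot be reorganized into $\bigoplus_{j=0}^{b-1}\mathcal{M}_0^{\otimes j}\otimes\mathcal{M}_1^{\otimes(b-1-j)}\otimes\mathcal{M}_\infty$ by any reindexing of summands --- the two sums have different ranks.) The module $\mathcal{M}_0$ in the statement is not a mod-$a$ reduction of $\mathcal{M}_1$; it is the underlying graded free module of a single Koszul factor $(a\cdot(N+1)x^N,0)$, which is what survives as homology after tensoring the Koszul complex with a torsion $\Q[a]$-module (tensoring with $\Q[a]/(a)$ kills every differential, since each differential is a multiple of $a$, so the homology of $\fC_N(\overline{(\emptyset)_{b-1}})\otimes_{\Q[a]}\mathcal{M}_\infty$ is the whole underlying module, not a subquotient of $\fH_N(\overline{(\emptyset)_{b-1}})$).

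The paper's proof is an induction on $b$, not a K\"unneth expansion. It uses Proposition \ref{prop-contraction-weak} to obtain a quasi-isomorphism from $\fC_N(\overline{(\emptyset)_b})$ to $\fC_N(\overline{(\emptyset)_{b-1}})\otimes_{\Q[a]}\Q[a,x_b]/(ax_b^N)\langle1\rangle\{-1,1-N\}$, decomposes $\Q[a,x_b]/(ax_b^N)$ as $\mathcal{M}_1\oplus\mathcal{M}_\infty$, treats the free summand $\mathcal{M}_1$ by exactness of $-\otimes\mathcal{M}_1$ (giving $\fH_N(\overline{(\emptyset)_{b-1}})\otimes\mathcal{M}_1$), and treats the torsion summand $\mathcal{M}_\infty$ by the observation above that all differentials die so that the homology is (underlying module of the Koszul factorization)$\otimes\mathcal{M}_\infty=\mathcal{M}_0^{\otimes(b-1)}\otimes\mathcal{M}_\infty$. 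The inductive hypothesis then closes the argument. You would need to replace your K\"unneth step with this contraction-plus-split-into-free-and-torsion step and discard the $\mathcal{M}_0\cong\mathcal{M}_1/a\mathcal{M}_1$ claim.
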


\begin{proof}
We prove this lemma by an induction on $b$. Mark $\overline{(\emptyset})_1$ by a single variable $x$. Then 
\[
\fC_N(\overline{(\emptyset)_1}) = ((N+1)ax_1^{N},0)_{\Q[a,x]} = \Q[a,x] \xrightarrow{(N+1)ax^{N}} \Q[a,x] \{-1,1-N\} \xrightarrow{0} \Q[a,x].
\]
So $\fH_N(\overline{(\emptyset})_1) \cong \Q[a,x]/(ax^N) \left\langle 1\right\rangle \{-1, 1-N\} \cong \mathcal{M}_1 \oplus \mathcal{M}_\infty$. This proves the lemma for $b=1$. 

Now assume the lemma is true for $\overline{(\emptyset)_{b-1}}$. Consider $\overline{(\emptyset)_b}$. Mark the $j$th circle in $\overline{(\emptyset)_b}$ by a single variable $x_j$. Then 
\[
\fC_N(\overline{(\emptyset)_b}) = \left(%
\begin{array}{cc}
  (N+1)ax_1^N & 0 \\
  (N+1)ax_2^N & 0 \\
  \dots & \dots \\
  (N+1)ax_b^N & 0
\end{array}%
\right)_{\Q[a,x_1,x_2,\dots,x_b]}.
\]
Thus, by Proposition \ref{prop-contraction-weak}, $\fC_N(\overline{(\emptyset)_b})$ is quasi-isomorphic to 
\[
\left(%
\begin{array}{cc}
  (N+1)ax_1^N & 0 \\
  (N+1)ax_2^N & 0 \\
  \dots & \dots \\
  (N+1)ax_{b-1}^N & 0
\end{array}%
\right)_{\Q[a,x_1,x_2,\dots,x_{b-1}]} \otimes_{\Q[a]} \Q[a,x_b]/(ax_b^N)\left\langle 1\right\rangle \{-1, 1-N\}.
\] 
Note that:
\begin{enumerate}
	\item $\fC_N(\overline{(\emptyset)_{b-1}}) \cong \left(%
\begin{array}{cc}
  (N+1)ax_1^N & 0 \\
  (N+1)ax_2^N & 0 \\
  \dots & \dots \\
  (N+1)ax_{b-1}^N & 0
\end{array}%
\right)_{\Q[a,x_1,x_2,\dots,x_{b-1}]}$,
  \item $\Q[a,x_b]/(ax_b^N)\left\langle 1\right\rangle \{-1, 1-N\} \cong \mathcal{M}_1 \oplus \mathcal{M}_\infty$,
	\item $\mathcal{M}_1$ is a free $\Q[a]$-module,
	\item The homology of $\left(%
\begin{array}{cc}
  (N+1)ax_1^N & 0 \\
  (N+1)ax_2^N & 0 \\
  \dots & \dots \\
  (N+1)ax_{b-1}^N & 0
\end{array}%
\right)_{\Q[a,x_1,x_2,\dots,x_{b-1}]} \otimes_{\Q[a]} \mathcal{M}_\infty$ is isomorphic to $\mathcal{M}_0^{\otimes (b-1)} \otimes \mathcal{M}_\infty$.
\end{enumerate}
Putting the above together, we get
\[
\fH_N(\overline{(\emptyset)_b}) \cong \fH_N(\overline{(\emptyset)_{b-1}}) \otimes_{\Q[a]} \mathcal{M}_1 \oplus \mathcal{M}_0^{\otimes (b-1)} \otimes \mathcal{M}_\infty.
\]
This isomorphism and the assumption that the lemma is true for $\overline{(\emptyset)_{b-1}}$ imply that the lemma is true for $\overline{(\emptyset)_b}$.
\end{proof}

To discuss the homology of a general closed resolved braid, we need the following lemma, which is a slight refinement of the usual structure theorem of modules over a principal deal domain.

\begin{lemma}\cite[Lemma 9.2]{Wu-triple-trans}\label{lemma-fg-graded-module-structure}
Suppose that $M$ is a finitely generated $\zed$-graded $\Q[a]$-module. Then, as a $\zed$-graded $\Q[a]$-module, $M\cong (\bigoplus_{j=1}^m \Q[a]\{s_j\}_a) \bigoplus (\bigoplus_{k=1}^n \Q[a]/(a^{l_k})\{t_k\}_a)$, where $\{\ast\}_a$ means shifting the $a$-grading by $\ast$, and the sequences $\{s_1,\dots,s_m\} \subset \zed$, $\{(l_1,t_1),\dots,(l_n,t_n)\} \subset \zed^{\oplus 2}$ are uniquely determined by $M$ up to permutation. We call this decomposition the standard decomposition of $M$.
\end{lemma}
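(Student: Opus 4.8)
The plan is to transport the classical structure theorem for finitely generated modules over a principal ideal domain into the $\zed$-graded category, exploiting that $\Q[a]$ with $\deg a = 2$ is a \emph{graded} PID: its homogeneous ideals are $(0)$ and the $(a^k)$ for $k\ge 1$, and every nonzero homogeneous element is $ca^k$ with $c\in\Q^{\times}$.

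For existence I would first choose homogeneous generators $m_1,\dots,m_n$ of $M$ with $\deg m_i = d_i$, giving a degree-preserving surjection $\pi\colon P:=\bigoplus_i\Q[a]\{d_i\}_a \to M$. Since $\Q[a]$ is Noetherian, $\ker\pi$ is a finitely generated graded submodule of $P$, hence generated by homogeneous elements, and this produces an exact sequence $\bigoplus_j\Q[a]\{e_j\}_a \xrightarrow{A} P \to M \to 0$ in which the $(i,j)$ entry of $A$ is homogeneous of degree $d_i-e_j$ (or zero). Then I would diagonalize $A$ by \emph{graded} row and column operations: permutations, scaling a row or column by an element of $\Q^{\times}$, and adding $ca^t$ times one row (resp.\ column) to another whenever the degrees match up. The key arithmetic point is that if $ca^k$ is a nonzero entry with $k$ minimal, then $a^k$ divides every entry (because $a^{\ell}=a^{\ell-k}a^{k}$ with $\ell\ge k$) and the required multipliers $a^{\ell-k}$ are honest nonnegative powers of $a$, so clearing the row and column of that entry stays within degree-preserving operations; moving it to the corner and recursing on the complementary block yields $A\simeq\mathrm{diag}(a^{k_1},\dots,a^{k_r},0,\dots,0)$ up to units and degree shifts. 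Reading off the cokernel, $M\cong\bigl(\bigoplus_i\Q[a]/(a^{k_i})\{t_i\}_a\bigr)\oplus\bigl(\bigoplus_j\Q[a]\{s_j\}_a\bigr)$, and discarding the vanishing summands (those with $k_i=0$) gives a decomposition of the required shape.

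For uniqueness I would argue as follows. The number $m$ of free summands equals $\dim_{\Q(a)}\bigl(M\otimes_{\Q[a]}\Q(a)\bigr)$, which is an invariant of $M$. The torsion submodule $T(M)$ is graded since $a$ is homogeneous, and $M/T(M)$ is finitely generated and torsion-free, hence free over $\Q[a]$; being finitely generated and graded it is \emph{graded} free by graded Nakayama (lift a homogeneous $\Q$-basis of $(M/T(M))/a(M/T(M))$ to $M/T(M)$; it is a homogeneous generating set of the correct cardinality, hence a basis), so the multiset $\{s_1,\dots,s_m\}$ is precisely the graded dimension of $(M/T(M))/a(M/T(M))$ and is determined by $M$. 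For the torsion part, writing $T(M)\cong\bigoplus_k\Q[a]/(a^{l_k})\{t_k\}_a$, the $k$th summand contributes a copy of $\Q$ in each of the degrees $t_k,t_k+2,\dots,t_k+2(l_k-1)$, so for every $j\ge 1$ one computes $\dim_\Q\bigl(a^{j-1}T(M)/a^{j}T(M)\bigr)_d = \#\{k : l_k\ge j,\ t_k = d-2(j-1)\}$; combining these via $\#\{k : l_k=j,\ t_k=d-2(j-1)\} = \dim_\Q(a^{j-1}T(M)/a^{j}T(M))_d - \dim_\Q(a^{j}T(M)/a^{j+1}T(M))_{d+2}$ recovers the multiset $\{(l_k,t_k)\}$ from $M$.

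The hard part is really the graded Smith normal form step: one must be sure the diagonalization never forces an operation that breaks the grading, and this is exactly where one uses that all nonzero homogeneous elements of $\Q[a]$ are associate to powers of $a$, so that division is always by a power of $a$ and the quotients are again homogeneous of the forced degree. Everything else is routine: the graded splitting $M\cong T(M)\oplus M/T(M)$ (because $M/T(M)$ is graded free, hence graded projective, so the quotient map admits a degree-$0$ section), and the degree-by-degree counting in the uniqueness argument. If one prefers to avoid presentation matrices, an alternative existence proof splits off the graded free part first, reducing to a finitely generated graded module over the graded Artinian ring $\Q[a]/(a^{K})$, and then inducts — on $K$ with $a^{K}T(M)=0$, or on $\dim_\Q T(M)$ — using the short exact sequence $0\to\ker(a^{K-1})\to T(M)\xrightarrow{a^{K-1}} a^{K-1}T(M)\to 0$ together with a graded splitting; either route gives the lemma.
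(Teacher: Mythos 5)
The paper does not prove this lemma itself; it cites it from \cite{Wu-triple-trans} and describes it only as a slight refinement of the usual PID structure theorem, so there is no in-paper argument to compare against line by line. Your proof is a correct and complete realization of exactly that description: $\Q[a]$ with $\deg_a a=2$ is a graded PID whose nonzero homogeneous elements are unit multiples of powers of $a$, the graded Smith normal form step is justified because the minimal power of $a$ occurring in the presentation matrix divides every other homogeneous entry and the needed multipliers are themselves homogeneous of the forced degree, and the uniqueness bookkeeping (rank of $M\otimes_{\Q[a]}\Q(a)$ for $m$, graded dimension of $(M/T(M))/a(M/T(M))$ for the multiset $\{s_j\}$, and the degree-by-degree count of $\dim_\Q(a^{j-1}T(M)/a^{j}T(M))_d$ for the multiset $\{(l_k,t_k)\}$) is sound, given that finitely generated $\zed$-graded $\Q[a]$-modules have bounded-below grading so graded Nakayama applies. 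I have no corrections.
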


\begin{definition}\label{def-MOY-homology-components}
For a closed resolved braid $\overline{\mu}$, we denote by $\fH_N^{\ve,j,k}(\overline{\mu})$ (resp. $\fC_N^{\ve,j,k}(\overline{\mu})$) the homogeneous component of  $\fH_N(\overline{\mu})$ (resp. $\fC_N(\overline{\mu})$) of $\zed_2$-degree $\ve$, $a$-degree $j$ and $x$-degree $k$. If we replace one of these indices by a $\star$, it means we direct sum the components over all possible values of that index. For example, $\fH_N^{\ve,\star,k}(\overline{\mu}) = \bigoplus_{j\in\zed}\fH_N^{\ve,j,k}(\overline{\mu})$.

Similarly, we denote by $H_N^{\ve,k}(\overline{\mu})$ (resp. $C_N^{\ve,k}(\overline{\mu})$) the homogeneous component  of $\zed_2$-degree $\ve$ and $x$-degree $k$ of the $\slmf(N)$ Khovanov-Rozansky homology $H_N(\overline{\mu})$ (resp. $C_N(\overline{\mu})$) of $\overline{\mu}$. 
\end{definition}

\begin{lemma}\label{lemma-module-homology-resolved-braid}
For a closed resolved braid $\overline{(\tau_{i_1}\cdots\tau_{i_m})_b}$ of $b$ strands, we have that, as a $\zed$-graded $\Q[a]$-module,
\[
\fH_N^{\ve,\star,k}(\overline{(\tau_{i_1}\cdots\tau_{i_m})_b}) \cong H_N^{\ve,k}(\overline{(\tau_{i_1}\cdots\tau_{i_m})_b}) \otimes_\Q \Q[a]\{-b\}_a \oplus \bigoplus_{i=1}^l \Q[a]/(a) \{s_i\}_a,
\]
where
\begin{itemize}
	\item we give $H_N^{\ve,k}(\overline{(\tau_{i_1}\cdots\tau_{i_m})_b})$ the $a$-grading $0$, and $\{\ast\}_a$ means shifting the $a$-grading by $\ast$,
	\item up to permutation, the sequence $\{s_1,\dots,s_l\}$ is uniquely determined by $\overline{(\tau_{i_1}\cdots\tau_{i_m})_b}$, $N$, $k$ and $\ve$,
	\item $-b \leq s_i \leq -1$ and $(N-1)s_i \leq k -2N+m$ for $i=1,\dots,l$.
\end{itemize}
\end{lemma}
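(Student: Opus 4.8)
The plan is to perform induction on the weight $w(\overline{\tau_{i_1}\cdots\tau_{i_m}})=i_1+\cdots+i_m$, using the reduction scheme provided by Corollary \ref{cor-resolved-braid-induction}. The base case is the empty word, handled by Lemma \ref{lemma-homology-empty-braid}: the explicit decomposition $\fH_N(\overline{(\emptyset)}_b)\cong \mathcal{M}_1^{\otimes b}\oplus(\bigoplus_{j=0}^{b-1}\mathcal{M}_0^{\otimes j}\otimes\mathcal{M}_1^{\otimes(b-1-j)})\otimes\mathcal{M}_\infty$ shows that $\fH_N^{\ve,\star,k}$ is a free $\Q[a]$-module of rank $\dim_\Q H_N^{\ve,k}$ with generators in $a$-degree $-b$ (coming from the $\mathcal{M}_1$-factors, each contributing a shift by $-1$), plus a sum of $\Q[a]/(a)$-torsion summands (from the $\mathcal{M}_\infty$-factor); one checks directly that the torsion $a$-shifts $s_i$ lie in $[-b,-1]$ and satisfy $(N-1)s_i\le k-2N+m$ with $m=0$. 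Here I would use that $\fH_N^{\ve,\star,k}$ is a finitely generated graded $\Q[a]$-module (by homotopical finiteness of $\fC_N$), so Lemma \ref{lemma-fg-graded-module-structure} applies and the claimed form is just the statement that all torsion exponents $l_k$ equal $1$ and the free part has a single fixed $a$-shift $-b$.

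For the inductive step, by Corollary \ref{cor-resolved-braid-induction} and the isotopy invariance of $\fH_N$ (closed resolved braids are closed MOY graphs, so Corollary \ref{cor-homology-inv} applies to the corresponding local moves, and isotopies of types I$_1$, I$_2$ change neither $b$ nor $m$), we may assume $\overline{\mu}$ has one of the forms (a), (b), (c). In each case the local piece ($\tau_i$ at the right with $i$ maximal, or $\tau_j\tau_j$, or $\tau_j\tau_{j-1}\tau_j$) can be simplified by the MOY decompositions of Lemma \ref{lemma-MOY-decomps}: these express $\fC_N(\overline{\mu})$ as a direct sum of $\fC_N$ of closed resolved braids of strictly smaller weight (possibly with different numbers of strands and $1$-colored edges), up to $\{j,k\}$-shifts and $\langle 1\rangle$-shifts. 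Because Lemma \ref{lemma-MOY-decomps} is a genuine direct-sum decomposition of matrix factorizations (not merely of $\slmf(N)$-complexes), passing to homology is exact and gives $\fH_N^{\ve,\star,k}(\overline{\mu})$ as a direct sum of appropriately shifted copies of $\fH_N^{\ve',\star,k'}$ of smaller-weight braids, to which the induction hypothesis applies. The same decompositions at $a=1$ (Lemma \ref{lemma-MOY-decomps-KR}) control $H_N^{\ve,k}(\overline{\mu})$ in a parallel way, so the rank of the free part is automatically correct; one just has to track the $a$- and $x$-grading shifts carefully. In type (b), the MOY II relation contributes a genuinely new feature — a summand with an internal differential $d_\chi$ that can create new torsion — and this is where the inequality $s_i\ge -b$ and the parity/degree bound $(N-1)s_i\le k-2N+m$ must be checked by hand from the shifts; the key point is that each application of MOY I to remove a circle costs one strand and contributes an $\mathcal{M}_1\oplus\mathcal{M}_\infty$ factor whose torsion shift is bounded below by $-1$ relative to the rest.

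I expect the main obstacle to be the bookkeeping of the grading shifts in the torsion terms, specifically proving $s_i\ge -b$ and $(N-1)s_i\le k-2N+m$ simultaneously through the induction. The difficulty is that the three reduction types change $b$ and $m$ in different ways (type (a) can decrease $b$ by one while dropping $m$ by one via MOY I; types (b), (c) keep $b$ fixed but drop $m$ by two or leave it unchanged after MOY II/III), so the inequalities must be verified to be preserved under each transformation with the exact shift constants appearing in \eqref{eq-MOY-I}, \eqref{eq-MOY-II}, \eqref{eq-MOY-III}. A secondary subtlety is ensuring that no torsion of exponent $\ge 2$ is created: this follows because all new torsion arises from $\mathcal{M}_\infty$-type factors $\Q[a]/(a)$ and tensoring/taking homology of these over $\Q[a]$ against free modules or against other $\Q[a]/(a)$'s never raises the exponent — but one must confirm the internal differentials $d_\chi$ in the decompositions do not conspire to produce $\Q[a]/(a^2)$, which follows from the fact that $d_\chi$ is $\Q[a]$-linear and the free parts split off cleanly by the argument in Lemma \ref{lemma-homology-empty-braid}.
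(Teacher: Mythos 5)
Your strategy---induction on weight, base case from Lemma \ref{lemma-homology-empty-braid}, reduction to cases (a), (b), (c) via Corollary \ref{cor-resolved-braid-induction}, and use of the MOY decompositions together with the uniqueness of the standard decomposition (Lemma \ref{lemma-fg-graded-module-structure})---matches the paper's. However, you misstate how Lemma \ref{lemma-MOY-decomps} enters in cases (a) and (c), and as written that step would not close the induction.

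You claim the MOY decompositions ``express $\fC_N(\overline{\mu})$ as a direct sum of $\fC_N$ of closed resolved braids of strictly smaller weight,'' and then conclude that ``passing to homology \dots gives $\fH_N^{\ve,\star,k}(\overline{\mu})$ as a direct sum of appropriately shifted copies \dots of smaller-weight braids.'' This is true only in case (b), where MOY II applied to $\tau_j\tau_j$ does split $\fC_N(\overline{\mu})$ into two shifted copies of a strictly smaller object. In case (a) the MOY I relation runs in the \emph{opposite} direction: it expresses the \emph{lower-weight} object as a direct sum having $\fH_N(\overline{\mu})$ as a \emph{summand},
\[
\fH_N(\overline{(\tau_{j_1}\cdots\tau_{j_{m-1}})_b}) \cong \fH_N(\overline{\mu})\{0,1\} \oplus \fH_N(\overline{(\tau_{j_1}\cdots\tau_{j_{m-1}})_{b-1}})\{-1,1-N\}.
\]
In case (c) MOY III gives a two-sided identity $\fH_N(\overline{\mu}) \oplus \fH_N(\cdot) \cong \fH_N(\cdot) \oplus \fH_N(\cdot)$ in which $\overline{\mu}$ is the only term of maximal weight. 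In both (a) and (c) the inductive step requires a \emph{cancellation} argument: the standard decompositions of all lower-weight terms (and of the parallel $\slmf(N)$ objects via Lemma \ref{lemma-MOY-decomps-KR}) are known by the induction hypothesis, and one invokes the uniqueness in Lemma \ref{lemma-fg-graded-module-structure} to subtract them and read off the decomposition of $\fH_N^{\ve,\star,k}(\overline{\mu})$. There is no direct-sum decomposition of $\fC_N(\overline{\mu})$ itself into smaller-weight pieces in those two cases, so the ``exactness of direct sums'' observation you rely on does not apply.

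A smaller but genuine slip: you repeatedly invoke an ``internal differential $d_\chi$'' when analyzing the torsion of $\fH_N(\overline{\mu})$ and when worrying about whether $\Q[a]/(a^2)$ torsion might appear. There is no $d_\chi$ in this setting. A closed resolved braid is a closed MOY graph, and by Definition \ref{def-homology-MOY} its homology $\fH_N(\Gamma)$ is the homology of the matrix factorization differential $d_{mf}$ alone; $d_\chi$ only appears once crossings are introduced. The torsion arises from $d_{mf}$ through factors of the form $\Q[a,x]/(ax^N)$ as in Lemma \ref{lemma-homology-empty-braid}, and the MOY decompositions, being direct sums of matrix factorizations up to homotopy, propagate that structure; there is no $d_\chi$ available to ``conspire'' toward higher-order torsion.
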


\begin{proof}
From the construction of $\fC_N(\overline{(\tau_{i_1}\cdots\tau_{i_m})_b})$, one can see that $\fC_N^{\ve,\star,k}(\overline{(\tau_{i_1}\cdots\tau_{i_m})_b})$ is a finitely generated free $\Q[a]$-module. This implies that $\fH_N^{\ve,\star,k}(\overline{(\tau_{i_1}\cdots\tau_{i_m})_b})$ is a finitely generated $\zed$-graded $\Q[a]$-module. So, by Lemma \ref{lemma-fg-graded-module-structure}, $\fH_N^{\ve,\star,k}(\overline{(\tau_{i_1}\cdots\tau_{i_m})_b})$ has a unique standard decomposition. Now, to prove the lemma, we only need to verify that:
\begin{enumerate}[(I)]
	\item The free part of $\fH_N^{\ve,\star,k}(\overline{(\tau_{i_1}\cdots\tau_{i_m})_b})$ is isomorphic to $H_N^{\ve,k}(\overline{(\tau_{i_1}\cdots\tau_{i_m})_b}) \otimes_\Q \Q[a]\{-b\}_a$.
	\item All torsion components of $\fH_N^{\ve,\star,k}(\overline{(\tau_{i_1}\cdots\tau_{i_m})_b})$ are of the form $\Q[a]/(a)\{s\}_a$.
	\item If $\fH_N^{\ve,\star,k}(\overline{(\tau_{i_1}\cdots\tau_{i_m})_b})$ contains a torsion component $\Q[a]/(a)\{s\}_a$, then $-b \leq s \leq -1$ and $(N-1)s \leq k -2N+m$.
\end{enumerate}

These three conclusions can be easily proved by an induction on the weight of $\overline{(\tau_{i_1}\cdots\tau_{i_m})_b}$ using Lemmas \ref{lemma-MOY-decomps}, \ref{lemma-MOY-decomps-KR} and Corollary \ref{cor-resolved-braid-induction}. 

If the weight of a closed resolved braid is $0$, then it is $\overline{(\emptyset)_b}$. By Lemma \ref{lemma-homology-empty-braid}, (I-III) is true for $\overline{(\emptyset)_b}$ for all $b\geq 0$.

Now assume that (I-III) is true for all closed resolved braids (on any number of strands) with weight less than the weight of $\overline{(\tau_{i_1}\cdots\tau_{i_m})_b}$. By Corollary \ref{cor-resolved-braid-induction}, via a finite sequence of isotopies of types I$_1$ and I$_2$, $\overline{(\tau_{i_1}\cdots\tau_{i_m})_b}$ is isotopic to a closed resolved braid of one of the following three types:
\begin{enumerate}[(a)]
	\item $\overline{(\tau_{j_1}\cdots\tau_{j_{m-1}}\tau_i)_b}$, where $i>j_1,\dots,j_m$;
	\item $\overline{(\tau_{j_1}\cdots\tau_{j_{m-2}}\tau_j\tau_j)_b}$;
	\item $\overline{(\tau_{j_1}\cdots\tau_{j_{m-3}}\tau_j\tau_{j-1}\tau_j)_b}$.
\end{enumerate}
Of course, isotopies of types I$_1$ and I$_2$ do not change the weight of a closed resolved braid.

In Case (a), we have
\begin{eqnarray*}
\fH_N(\overline{(\tau_{i_1}\cdots\tau_{i_m})_b}) & \cong & \fH_N(\overline{(\tau_{j_1}\cdots\tau_{j_{m-1}}\tau_i)_b}), \\
\fH_N(\overline{(\tau_{j_1}\cdots\tau_{j_{m-1}})_b}) & \cong & \fH_N(\overline{(\tau_{j_1}\cdots\tau_{j_{m-1}}\tau_i)_b})\{0,1\} \oplus \fH_N(\overline{(\tau_{j_1}\cdots\tau_{j_{m-1}})_{b-1}})\{-1,1-N\}, \\
H_N(\overline{(\tau_{j_1}\cdots\tau_{j_{m-1}})_b}) & \cong & H_N(\overline{(\tau_{j_1}\cdots\tau_{j_{m-1}}\tau_i)_b})\{1\}_x \oplus H_N(\overline{(\tau_{j_1}\cdots\tau_{j_{m-1}})_{b-1}})\{1-N\}_x,
\end{eqnarray*}
where the second and third isomorphisms follow from Lemmas \ref{lemma-MOY-decomps} and \ref{lemma-MOY-decomps-KR}. The weights of both $\overline{(\tau_{j_1}\cdots\tau_{j_{m-1}})_b}$ and $\overline{(\tau_{j_1}\cdots\tau_{j_{m-1}})_{b-1}}$ are less than that of $\overline{(\tau_{i_1}\cdots\tau_{i_m})_b}$. So (I-III) are true for $\overline{(\tau_{j_1}\cdots\tau_{j_{m-1}})_b}$ and $\overline{(\tau_{j_1}\cdots\tau_{j_{m-1}})_{b-1}}$. Moreover, by Lemma \ref{lemma-fg-graded-module-structure}, the standard decomposition of $\fH_N^{\ve,\star,k}(\overline{(\tau_{j_1}\cdots\tau_{j_{m-1}})_b})$ is unique. It then follows from the above isomorphisms that (I-III) are true for $\overline{(\tau_{i_1}\cdots\tau_{i_m})_b}$ too.

In Case (b), we have
\begin{eqnarray*}
\fH_N(\overline{(\tau_{i_1}\cdots\tau_{i_m})_b}) & \cong & \fH_N(\overline{(\tau_{j_1}\cdots\tau_{j_{m-2}}\tau_j\tau_j)_b}), \\
\fH_N(\overline{(\tau_{j_1}\cdots\tau_{j_{m-2}}\tau_j\tau_j)_b}) & \cong & \fH_N(\overline{(\tau_{j_1}\cdots\tau_{j_{m-2}}\tau_j)_b})\{0,1\} \oplus \fH_N(\overline{(\tau_{j_1}\cdots\tau_{j_{m-2}}\tau_j)_b}) \{0,-1\}, \\
H_N(\overline{(\tau_{j_1}\cdots\tau_{j_{m-2}}\tau_j\tau_j)_b}) & \cong & H_N(\overline{(\tau_{j_1}\cdots\tau_{j_{m-2}}\tau_j)_b})\{1\}_x \oplus H_N(\overline{(\tau_{j_1}\cdots\tau_{j_{m-2}}\tau_j)_b}) \{-1\}_x,
\end{eqnarray*}
where the second and third isomorphisms follow from Lemmas \ref{lemma-MOY-decomps} and \ref{lemma-MOY-decomps-KR}. The weight of $\overline{(\tau_{j_1}\cdots\tau_{j_{m-2}}\tau_j)_b}$ is less than that of $\overline{(\tau_{i_1}\cdots\tau_{i_m})_b}$. So (I-III) are true for $\overline{(\tau_{j_1}\cdots\tau_{j_{m-2}}\tau_j)_b}$. It then follows from the above isomorphisms that (I-III) are true for $\overline{(\tau_{i_1}\cdots\tau_{i_m})_b}$ too.

In Case (c), we have 
\begin{eqnarray*}
\fH_N(\overline{(\tau_{i_1}\cdots\tau_{i_m})_b}) & \cong & \fH_N(\overline{(\tau_{j_1}\cdots\tau_{j_{m-3}}\tau_j\tau_{j-1}\tau_j)_b}), \\
\fH_N(\overline{(\tau_{j_1}\cdots\tau_{j_{m-3}}\tau_j\tau_{j-1}\tau_j)_b}) \oplus \fH_N(\overline{(\tau_{j_1}\cdots\tau_{j_{m-3}}\tau_{j-1})_b}) & \cong & \fH_N(\overline{(\tau_{j_1}\cdots\tau_{j_{m-3}}\tau_{j-1}\tau_{j}\tau_{j-1})_b}) \oplus \fH_N(\overline{(\tau_{j_1}\cdots\tau_{j_{m-3}}\tau_{j})_b}), \\
H_N(\overline{(\tau_{j_1}\cdots\tau_{j_{m-3}}\tau_j\tau_{j-1}\tau_j)_b}) \oplus H_N(\overline{(\tau_{j_1}\cdots\tau_{j_{m-3}}\tau_{j-1})_b}) & \cong & H_N(\overline{(\tau_{j_1}\cdots\tau_{j_{m-3}}\tau_{j-1}\tau_{j}\tau_{j-1})_b}) \oplus H_N(\overline{(\tau_{j_1}\cdots\tau_{j_{m-3}}\tau_{j})_b}),
\end{eqnarray*}
where the second and third isomorphisms follow from Lemmas \ref{lemma-MOY-decomps} and \ref{lemma-MOY-decomps-KR}. The weights of $\overline{(\tau_{j_1}\cdots\tau_{j_{m-3}}\tau_{j-1})_b}$, $\overline{(\tau_{j_1}\cdots\tau_{j_{m-3}}\tau_{j-1}\tau_{j}\tau_{j-1})_b}$ and $\overline{(\tau_{j_1}\cdots\tau_{j_{m-3}}\tau_{j})_b}$ are less than that of $\overline{(\tau_{i_1}\cdots\tau_{i_m})_b}$. So (I-III) are true for these three resolved closed braids. It then follows from the above isomorphisms that (I-III) are true for $\overline{(\tau_{i_1}\cdots\tau_{i_m})_b}$ too.
\end{proof}

\begin{corollary}\label{corollary-MOY-homology-vanish}
$\fH^{b+1,\star,\star}(\overline{(\tau_{i_1}\cdots\tau_{i_m})_b})$ is a direct sum of components of the form $\Q[a]/(a)\{s,k\}$
\end{corollary}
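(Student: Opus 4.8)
The plan is to deduce the statement directly from Lemma~\ref{lemma-module-homology-resolved-braid}. For a fixed $\ve\in\zed_2$ and $k\in\zed$, that lemma supplies an isomorphism of $\zed$-graded $\Q[a]$-modules
\[
\fH_N^{\ve,\star,k}(\overline{(\tau_{i_1}\cdots\tau_{i_m})_b}) \cong \bigl(H_N^{\ve,k}(\overline{(\tau_{i_1}\cdots\tau_{i_m})_b})\otimes_\Q\Q[a]\{-b\}_a\bigr)\oplus\bigoplus_{i=1}^{l}\Q[a]/(a)\{s_i\}_a,
\]
in which every torsion summand is already of the form $\Q[a]/(a)\{s\}_a$. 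Hence it is enough to show that the free part vanishes in the relevant $\zed_2$-degree, i.e. that $H_N^{\ve,k}(\overline{(\tau_{i_1}\cdots\tau_{i_m})_b})=0$ for all $k$ whenever $\ve\equiv b+1\pmod 2$. Granting this and taking the direct sum over $k$ (there being a unique $\ve\in\zed_2$ of the prescribed parity), one obtains $\fH_N^{b+1,\star,\star}(\overline{(\tau_{i_1}\cdots\tau_{i_m})_b})\cong\bigoplus \Q[a]/(a)\{s,k\}$, which is the assertion of the corollary.

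So the crux is the $\zed_2$-concentration statement: $H_N(\overline{(\tau_{i_1}\cdots\tau_{i_m})_b})$ is supported in $\zed_2$-degree $\equiv b\pmod 2$. I would prove this by induction on the weight $w(\overline{(\tau_{i_1}\cdots\tau_{i_m})_b})=i_1+\cdots+i_m$, running exactly the reduction scheme of Corollary~\ref{cor-resolved-braid-induction} that already underlies the proof of Lemma~\ref{lemma-module-homology-resolved-braid}. The base case $w=0$ is $\overline{(\emptyset)_b}$, for which Lemma~\ref{lemma-homology-empty-braid} (equivalently, the observation that $\overline{(\emptyset)_b}$ is $b$ disjoint $1$-colored circles, so that $H_N$ is a $b$-fold tensor product over $\Q$ of the homology of an unknot) shows $H_N(\overline{(\emptyset)_b})$ lies in $\zed_2$-degree $\equiv b$. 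For the inductive step, Corollary~\ref{cor-resolved-braid-induction} puts $\overline{(\tau_{i_1}\cdots\tau_{i_m})_b}$ into one of the forms (a)--(c), and the relevant instance of \eqref{eq-MOY-I-KR}, \eqref{eq-MOY-II-KR} or \eqref{eq-MOY-III-KR} exhibits $H_N(\overline{(\tau_{i_1}\cdots\tau_{i_m})_b})$ as a direct summand of a direct sum of $\zed_2$-grading shifts of $H_N$ of closed resolved braids of strictly smaller weight, living on $b$ or on $b-1$ strands; in each case one checks that the $\zed_2$-grading shift occurring in the MOY relation precisely offsets the corresponding change in the number of strands, so that concentration in $\zed_2$-degree congruent to the number of strands is preserved through the induction. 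As $\overline{(\tau_{i_1}\cdots\tau_{i_m})_b}$ has $b$ strands, $H_N(\overline{(\tau_{i_1}\cdots\tau_{i_m})_b})$ is concentrated in $\zed_2$-degree $\equiv b\pmod 2$, completing the induction.

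The only genuine work is thus this $\zed_2$-bookkeeping in the inductive step, but it runs entirely parallel to---and is no harder than---the $\zed_2$-bookkeeping already carried out for $\fH_N$ in the proof of Lemma~\ref{lemma-module-homology-resolved-braid}, so I do not expect a real obstacle. Alternatively, the $\zed_2$-concentration of $H_N(\overline{(\tau_{i_1}\cdots\tau_{i_m})_b})$ can simply be quoted from the general structure of $\slmf(N)$ Khovanov--Rozansky homology in \cite{KR1} (compare the parity remark in Remark~\ref{remark-torsion-s-1}) or from \cite{Wu5}, in which case Corollary~\ref{corollary-MOY-homology-vanish} is an immediate consequence of Lemma~\ref{lemma-module-homology-resolved-braid}.
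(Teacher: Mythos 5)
Your proposal is correct and takes essentially the same route as the paper: the paper simply cites $H_N^{b+1,\star}(\overline{(\tau_{i_1}\cdots\tau_{i_m})_b})\cong 0$ from \cite{KR1} and invokes Lemma~\ref{lemma-module-homology-resolved-braid}, which is exactly your ``alternatively'' option. The self-contained induction on weight you sketch for that $\zed_2$-concentration is sound (it mirrors the induction already used to prove Lemma~\ref{lemma-module-homology-resolved-braid}), but it is not needed in the paper.
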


\begin{proof}
From \cite{KR1}, we know that $H^{b+1,\star}(\overline{(\tau_{i_1}\cdots\tau_{i_m})_b})\cong 0$. So the corollary follows from Lemma \ref{lemma-module-homology-resolved-braid}.
\end{proof}

\subsection{Homology of a closed braid}\label{subsec-homology-braid} We are now ready to prove Theorem \ref{thm-fH-module}.

Let $B$ be a closed braid of $b$ strands. Recall that $\fH_N(B) = H(H(\fC_N(B),d_{mf}),d_\chi)$. Denote by $H^{\ve,i,j,k}(\fC_N(B),d_{mf})$ the homogeneous component of $H(\fC_N(B),d_{mf})$ of $\zed_2$-degree $\ve$, homological degree $i$, $a$-degree $j$ and $x$-degree $k$. We use the $\star$-notation as introduced in Definition \ref{def-component-notations}. Then, for every $(\ve,k) \in \zed_2\oplus \zed$, $(H^{\ve,\star,\star,k}(\fC_N(B),d_{mf}), d_\chi)$ is a bounded chain complex of finitely generated $\zed$-graded $\Q[a]$-modules. Denote by $F^{\ve,i,\star,k}$ the free part of $H^{\ve,i,\star,k}(\fC_N(B),d_{mf})$ and by $T^{\ve,i,\star,k}$ the torsion part of $H^{\ve,i,\star,k}(\fC_N(B),d_{mf})$. Note that $sl(B)=c_+-c_--b$, where $c_\pm$ is the number of $\pm$ crossings in $B$. Then, by Lemma \ref{lemma-module-homology-resolved-braid}, 
\begin{itemize}
	\item $F^{\ve,i,\star,k} \cong H^{\ve,i,k}(C_N(B),d_{mf})\otimes_\Q \Q[a]\{sl(B)\}_a$,
	\item $T^{\ve,i,\star,k}$ is a direct sum of finitely many components of the form $\Q[a]/(a)\{s\}_a$.
\end{itemize}
Under the decomposition $H^{\ve,i,\star,k}(\fC_N(B),d_{mf}) =  \left.%
\begin{array}{c}
  F^{\ve,i,\star,k} \\
  \oplus \\
  T^{\ve,i,\star,k}
\end{array}%
\right.$, then differential map $H^{\ve,i,\star,k}(\fC_N(B),d_{mf}) \xrightarrow{d_\chi^i} H^{\ve,i+1,\star,k}(\fC_N(B),d_{mf})$ takes the form
\[
\left.%
\begin{array}{c}
  F^{\ve,i,\star,k} \\
  \oplus \\
  T^{\ve,i,\star,k}
\end{array}%
\right.
\xrightarrow{\left(%
\begin{array}{cc}
 d_{\chi,FF}^i & 0 \\
 d_{\chi,FT}^i & d_{\chi,TT}^i
\end{array}%
\right)}
\left.%
\begin{array}{c}
  F^{\ve,i+1,\star,k} \\
  \oplus \\
  T^{\ve,i+1,\star,k}
\end{array}%
\right.,
\]
where $d_{\chi,FF}^i$, $d_{\chi,FT}^i$ and $d_{\chi,TT}^i$ are homogeneous homomorphisms of $\zed$-graded $\Q[a]$-modules preserving the $a$-grading. This give rise to two chain complexes $(F^{\ve,\star,\star,k}, d_{\chi,FF})$ and $(T^{\ve,\star,\star,k}, d_{\chi,TT})$. Moreover, $(H^{\ve,\star,\star,k}(\fC_N(B),d_{mf}), d_\chi)$ is isomorphic to the mapping cone of the chain map $F^{\ve,\star,\star,k}\|1\| \xrightarrow{d_{\chi,FT}} T^{\ve,\star,\star,k}$, where ``$\|\ast\|$" means shifting the homological grading up by $\ast$. Thus, we get the follow lemma.

\begin{lemma}\label{lemma-FT-exact-seq}
There is a short exact sequence
\[
0\rightarrow T^{\ve,\star,\star,k} \rightarrow \fC_N^{\ve,\star,\star,k}(B) \rightarrow F^{\ve,\star,\star,k} \rightarrow 0,
\]
which induces a long exact sequence
\[
\cdots \xrightarrow{d_{\chi,FT}^{i-1}} H^i(T^{\ve,\star,\star,k}) \rightarrow \fH_N^{\ve,i,\star,k}(B) \rightarrow H^i(F^{\ve,\star,\star,k}) \xrightarrow{d_{\chi,FT}^{i}} H^{i+1}(T^{\ve,\star,\star,k}) \rightarrow \cdots
\]
of $\zed$-graded $\Q[a]$-modules, where the arrows preserve the $\Q[a]$-grading.
\end{lemma}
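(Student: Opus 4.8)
The plan is to recast the triangular decomposition set up just above as a short exact sequence of complexes and then invoke the associated long exact sequence in homology. The discussion preceding the lemma has already decomposed, for each homological degree $i$, the finitely generated $\zed$-graded $\Q[a]$-module $H^{\ve,i,\star,k}(\fC_N(B),d_{mf})$ as $F^{\ve,i,\star,k}\oplus T^{\ve,i,\star,k}$ (free part plus torsion part, via Lemma \ref{lemma-fg-graded-module-structure}) and has observed that $d_\chi^i$ is lower triangular with respect to this splitting. The first point to record is \emph{why} the $T^{\ve,i,\star,k}\to F^{\ve,i+1,\star,k}$ block vanishes automatically: any homogeneous $\Q[a]$-linear map from a torsion module to a free module is zero. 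Hence $(T^{\ve,\star,\star,k},d_{\chi,TT})$ is a genuine subcomplex of $(H^{\ve,\star,\star,k}(\fC_N(B),d_{mf}),d_\chi)$ with quotient complex $(F^{\ve,\star,\star,k},d_{\chi,FF})$, and the inclusion and projection give the short exact sequence
\[
0\rightarrow T^{\ve,\star,\star,k} \rightarrow \fC_N^{\ve,\star,\star,k}(B) \rightarrow F^{\ve,\star,\star,k} \rightarrow 0,
\]
where the middle complex is $(H^{\ve,\star,\star,k}(\fC_N(B),d_{mf}),d_\chi)$. Every map in it is homogeneous and preserves the $a$-grading, because $d_{mf}$ and $d_\chi$ do, the blocks $d_{\chi,FF},d_{\chi,FT},d_{\chi,TT}$ therefore do, and the splitting furnished by Lemma \ref{lemma-fg-graded-module-structure} is a splitting of $\zed$-graded modules.

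Next I would apply the standard long exact sequence in homology attached to this short exact sequence of bounded complexes. Its middle terms are the homology modules of $(H^{\ve,\star,\star,k}(\fC_N(B),d_{mf}),d_\chi)$, which by Definition \ref{def-homology-link} are precisely the $\fH_N^{\ve,i,\star,k}(B)$. It remains only to identify the connecting homomorphism $H^i(F^{\ve,\star,\star,k})\to H^{i+1}(T^{\ve,\star,\star,k})$: running the snake lemma, a cycle $f\in F^{\ve,i,\star,k}$ lifts to $(f,0)$ in degree $i$, and $d_\chi(f,0)=(d_{\chi,FF}^i f, d_{\chi,FT}^i f)=(0,d_{\chi,FT}^i f)\in T^{\ve,i+1,\star,k}$, so the connecting map is the map induced on homology by $d_{\chi,FT}$, as stated. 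Since the connecting homomorphism of a short exact sequence of $\zed$-graded complexes with grading-preserving maps is itself grading preserving, all arrows in the long exact sequence preserve the $a$-grading.

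I do not expect a genuine obstacle here: the lemma is essentially a repackaging of the filtered-complex (equivalently, mapping cone) structure already exhibited in the discussion, and the long exact sequence is the textbook one. The only steps that need a line of justification are the vanishing of torsion-to-free homomorphisms — which is what makes the sequence exact on the nose rather than up to a correction term — and the bookkeeping that the chosen structure-theorem splitting, the differential blocks, and hence the connecting map all respect the internal $\zed$-grading.
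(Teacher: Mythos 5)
Your proof is correct and takes essentially the same route as the paper: you recast the lower-triangular differential on $F^{\ve,\star,\star,k}\oplus T^{\ve,\star,\star,k}$ as a short exact sequence of complexes with $T$ as a subcomplex and $F$ as quotient, then invoke the snake lemma, while the paper phrases the identical structure as a mapping cone of $d_{\chi,FT}$ and quotes the standard mapping-cone long exact sequence. The observation that the torsion-to-free block of $d_\chi$ vanishes automatically (any $\Q[a]$-linear map from a torsion module to a free module is zero) is a nice explicit justification of a point the paper uses implicitly.
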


\begin{proof}
This lemma follows from the standard construction of a long exact sequence from a mapping cone. 
\end{proof}

\begin{lemma}\label{lemma-free-homology}
$H^i(F^{\ve,\star,\star,k}) \cong H_N^{\ve,i,k}(B)\otimes_\Q \Q[a]\{sl(B)\}_a$ for every $(\ve,i,k) \in \zed_2 \oplus \zed^{\oplus 2}$.
\end{lemma}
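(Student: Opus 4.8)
The plan is to show that the complex $(F^{\ve,\star,\star,k},d_{\chi,FF})$ of $\zed$-graded $\Q[a]$-modules is, up to the grading shift $\{sl(B)\}_a$, obtained from the $\slmf(N)$ complex $\big(H^{\ve,\star,k}(C_N(B),d_{mf}),d_\chi\big)$ by extending scalars from $\Q$ to $\Q[a]$, and then to take homology. The identification of $F^{\ve,i,\star,k}$ recorded above gives, in each homological degree, $F^{\ve,i,\star,k}\cong H^{\ve,i,k}(C_N(B),d_{mf})\otimes_\Q\Q[a]\{sl(B)\}_a$, a free $\Q[a]$-module all of whose generators lie in $a$-degree $sl(B)$. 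Any homogeneous $\Q[a]$-linear map between two such modules that preserves the $a$-grading must have matrix entries homogeneous of $a$-degree $0$, hence scalars in $\Q$; therefore $d_{\chi,FF}$ is of the form $\delta\otimes\id_{\Q[a]}$ for a unique $\Q$-linear endomorphism $\delta$ of the $\zed$-graded $\Q$-vector space $H^{\ve,\star,k}(C_N(B),d_{mf})$ raising the homological grading by $1$, and $\delta^2=0$ because $(d_{\chi,FF})^2=0$. In other words, as complexes of $\zed$-graded $\Q[a]$-modules,
\[
(F^{\ve,\star,\star,k},d_{\chi,FF})\;\cong\;\big(H^{\ve,\star,k}(C_N(B),d_{mf}),\delta\big)\otimes_\Q\Q[a]\{sl(B)\}_a.
\]

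Next I would identify $\delta$ with the $\slmf(N)$ differential $d_\chi$ on $H^{\ve,\star,k}(C_N(B),d_{mf})$. The quotient map $\fC_N(B)\to\fC_N(B)/(a-1)\fC_N(B)=C_N(B)$ is a chain map for both $d_{mf}$ and $d_\chi$, and every $\fC_N^{\ve,i,\star,k}(B)$ is a finitely generated free $\Q[a]$-module; hence the universal coefficient theorem, applied to $(\fC_N(B),d_{mf})$ viewed for each $(i,k)$ as a $2$-periodic complex of free $\Q[a]$-modules, yields a short exact sequence
\[
0\to H^{\ve,i,\star,k}(\fC_N(B),d_{mf})\otimes_{\Q[a]}\Q[a]/(a-1)\to H^{\ve,i,k}(C_N(B),d_{mf})\to\mathrm{Tor}_1^{\Q[a]}\!\big(H^{\ve+1,i,\star,k}(\fC_N(B),d_{mf}),\Q[a]/(a-1)\big)\to 0.
\]
The $\mathrm{Tor}$-term is the $(a-1)$-torsion submodule of $H^{\ve+1,i,\star,k}(\fC_N(B),d_{mf})$; by Lemma \ref{lemma-module-homology-resolved-braid}, applied to the closed resolved braid summands of $\fC_N(B)$ in homological degree $i$, all torsion in that module is annihilated by $a$, on which $a-1$ acts invertibly, so the $\mathrm{Tor}$-term vanishes. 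Hence the map $H^{\ve,i,\star,k}(\fC_N(B),d_{mf})\otimes_{\Q[a]}\Q[a]/(a-1)\to H^{\ve,i,k}(C_N(B),d_{mf})$ is an isomorphism, and it is natural with respect to the maps induced by $d_\chi$ as $i$ varies. Since $T^{\ve,i,\star,k}\otimes_{\Q[a]}\Q[a]/(a-1)=0$, its source is $F^{\ve,i,\star,k}\otimes_{\Q[a]}\Q[a]/(a-1)$, where the induced differential is the reduction of $d_{\chi,FF}$, namely $\delta$; so this natural isomorphism carries $\big(H^{\ve,\star,k}(C_N(B),d_{mf}),\delta\big)$ onto the $\slmf(N)$ complex $\big(H^{\ve,\star,k}(C_N(B),d_{mf}),d_\chi\big)$, whose homology is $H_N^{\ve,\star,k}(B)$.

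Finally, since $\Q[a]$ is free, hence flat, over $\Q$, the functor $-\otimes_\Q\Q[a]$ commutes with homology, so
\[
H^i(F^{\ve,\star,\star,k})\;\cong\;H^i\big(H^{\ve,\star,k}(C_N(B),d_{mf}),d_\chi\big)\otimes_\Q\Q[a]\{sl(B)\}_a\;\cong\;H_N^{\ve,i,k}(B)\otimes_\Q\Q[a]\{sl(B)\}_a,
\]
which is the claim. I expect the second step to be the real obstacle: one must be sure that forming $d_{mf}$-homology commutes on the nose with the specialization $a\mapsto1$, with no universal-coefficient correction, and this is exactly what the vanishing of the $\mathrm{Tor}$-term — i.e. the fact, supplied by Lemma \ref{lemma-module-homology-resolved-braid}, that every torsion class in $H(\fC_N(B),d_{mf})$ is killed by $a$ rather than by some element divisible by $a-1$ — guarantees. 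The remaining steps are routine bookkeeping with the $\zed_2\oplus\zed^{\oplus3}$-grading.
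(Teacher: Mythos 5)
Your proof is correct and takes essentially the same approach as the paper: both arguments first observe that $d_{\chi,FF}$ has a matrix with entries in $\Q$ (since $F^{\ve,i,\star,k}$ is free with all generators concentrated in $a$-degree $sl(B)$), so $(F^{\ve,\star,\star,k},d_{\chi,FF})$ is obtained from a complex over $\Q$ by applying $-\otimes_\Q\Q[a]\{sl(B)\}_a$, and both then identify that $\Q$-complex with $(H^{\ve,\star,k}(C_N(B),d_{mf}),d_\chi)$ by specializing $a\mapsto 1$. The only cosmetic difference is that you phrase the specialization step via the universal coefficient theorem, deducing the vanishing of the $\mathrm{Tor}$-term from the fact (Lemma \ref{lemma-module-homology-resolved-braid}) that all torsion in $H(\fC_N(B),d_{mf})$ is killed by $a$, whereas the paper uses the long exact sequence from multiplication by $a-1$ together with \cite[Lemma 9.1]{Wu-triple-trans} for the injectivity of $a-1$ on $d_{mf}$-homology; these two arguments are the same fact in different clothing.
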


\begin{proof}
Recall that $C_N(B) := \fC_N(B)/(a-1)\fC_N(B)$ and $\fC_N(B)$ is a free $\Q[a]$-module. So there is a short exact sequence 
\[
0 \rightarrow \fC_N(B) \xrightarrow{a-1} \fC_N(B) \rightarrow C_N(B) \rightarrow 0.
\]
This induces a long exact sequence 
{\tiny
\[
\cdots \rightarrow H^{\ve,i,\star,\star}(\fC_N(B),d_{mf}) \xrightarrow{a-1} H^{\ve,i,\star,\star}(\fC_N(B),d_{mf}) \rightarrow H^{\ve,i,\star,\star}(C_N(B),d_{mf}) \rightarrow H^{\ve+1,i,\star,\star}(\fC_N(B),d_{mf})\{-1,-N-1\} \xrightarrow{a-1} \cdots
\]}\vspace{-1pc}

\noindent preserving the $x$-grading. By \cite[Lemma 9.1]{Wu-triple-trans}, the multiplication by $a-1$ is an injective endomorphism of $H^{\ve,i,\star,\star}(\fC_N(B),d_{mf})$. So this long exact sequence breaks into a short exact sequence
\[
0\rightarrow (H^{\ve,\star,\star,\star}(\fC_N(B),d_{mf}),d_\chi) \xrightarrow{a-1} (H^{\ve,\star,\star,\star}(\fC_N(B),d_{mf}),d_\chi) \rightarrow (H^{\ve,\star,\star,\star}(C_N(B),d_{mf}),d_\chi) \rightarrow 0.
\]
This shows that the chain complexes $(H(C_N(B),d_{mf}),d_\chi)$ and $(H(\fC_N(B),d_{mf})/(a-1)H(\fC_N(B),d_{mf}),d_\chi)$ are isomorphic to each other, and the isomorphism preserves the $\zed_2$-, homological and $x$-gradings.  

From the decomposition $H^{\ve,i,\star,k}(\fC_N(B),d_{mf}) =  \left.%
\begin{array}{c}
  F^{\ve,i,\star,k} \\
  \oplus \\
  T^{\ve,i,\star,k}
\end{array}%
\right.$, it is clear that 
\[
(H^{\ve,\star,\star,k}(\fC_N(B),d_{mf})/(a-1)H^{\ve,\star,\star,k}(\fC_N(B),d_{mf}), d_\chi) \cong (F^{\ve,\star,\star,k}/(a-1)F^{\ve,\star,\star,k}, d_{\chi,FF}).
\]
So there is an isomorphism of chain complexes
\begin{equation}\label{eq-isomorphism-quotient-free}
(F^{\ve,\star,\star,k}/(a-1)F^{\ve,\star,\star,k}, d_{\chi,FF}) \cong (H^{\ve,\star,k}(C_N(B),d_{mf}),d_\chi).
\end{equation}
Recall that $d_{\chi,FF}$ preserves the $a$-grading and $F^{\ve,i,\star,k} \cong H^{\ve,i,k}(C_N(B),d_{mf})\otimes_\Q \Q[a]\{sl(B)\}_a$, where the shift of the $a$-grading is independent of the homological grading $i$. Now let $n_i = \dim_\Q H^{\ve,i,k}(C_N(B),d_{mf})$ and fix a basis for $H^{\ve,i,k}(C_N(B),d_{mf})$. This basis induces a $\Q[a]$-basis for $F^{\ve,i,\star,k}$ and allows us to identify $F^{\ve,i,\star,k}$ with $\Q[a]^{\oplus n_i}\{sl(B)\}_a$. Thus, $(F^{\ve,\star,\star,k}, d_{\chi,FF})$ is isomorphic to the chain complex
\[
C= \cdots \xrightarrow{D_{i-1}} \Q[a]^{\oplus n_i}\{sl(B)\}_a \xrightarrow{D_{i}} \Q[a]^{\oplus n_{i+1}}\{sl(B)\}_a \xrightarrow{D_{i+1}} \cdots,
\]
where $D_i$ is the matrix of $d^i_{\chi,FF}$ relative to the bases of $F^{\ve,i,\star,k}$ and $F^{\ve,i+1,\star,k}$. Since $d_{\chi,FF}$ preserves the $a$-grading, all entries of $D_i$ are elements of $\Q$. Consider the chain complex 
\[
\hat{C}= \cdots \xrightarrow{D_{i-1}} \Q^{\oplus n_i} \xrightarrow{D_{i}} \Q^{\oplus n_{i+1}} \xrightarrow{D_{i+1}} \cdots.
\]
One can see that $(F^{\ve,\star,\star,k}, d_{\chi,FF}) \cong C \cong \hat{C} \otimes_\Q \Q[a] \{sl(B)\}_a$ and, by isomorphism \eqref{eq-isomorphism-quotient-free}, $\hat{C} \cong C/(a-1)C \cong (H^{\ve,\star,k}(C_N(B),d_{mf}),d_\chi)$. Combining these, we get 
\[
(F^{\ve,\star,\star,k}, d_{\chi,FF}) \cong (H^{\ve,\star,k}(C_N(B),d_{mf})\otimes_\Q \Q[a] \{sl(B)\}_a,d_\chi).
\] 
This implies that $H^i(F^{\ve,\star,\star,k}) \cong H_N^{\ve,i,k}(B)\otimes_\Q \Q[a]\{sl(B)\}_a$.
\end{proof}

\begin{proof}[Proof of Theorem \ref{thm-fH-module}]
From Lemmas \ref{lemma-FT-exact-seq} and \ref{lemma-free-homology}, we get a long exact sequence 
\begin{equation}\label{eq-FT-exact-seq}
\cdots \rightarrow H^i(T^{\ve,\star,\star,k}) \rightarrow \fH_N^{\ve,i,\star,k}(B) \rightarrow H_N^{\ve,i,k}(B)\otimes_\Q \Q[a]\{sl(B)\}_a \xrightarrow{d_{\chi,FT}^{i}} H^{i+1}(T^{\ve,\star,\star,k}) \rightarrow \cdots.
\end{equation}
Denote by $F\fH_N^{\ve,i,\star,k}(B)$ the free part of the $\zed$-graded $\Q[a]$-module $\fH_N^{\ve,i,\star,k}(B)$ and by $T\fH_N^{\ve,i,\star,k}(B)$ the torsion part of $\fH_N^{\ve,i,\star,k}(B)$. Then the long exact sequence \eqref{eq-FT-exact-seq} splits into two exact sequences:
\begin{eqnarray}
\label{eq-FT-exact-seq-T} && \cdots \rightarrow H^i(T^{\ve,\star,\star,k}) \rightarrow T\fH_N^{\ve,i,\star,k}(B) \rightarrow 0, \\
\label{eq-FT-exact-seq-F} && 0 \rightarrow F\fH_N^{\ve,i,\star,k}(B) \xrightarrow{f} H_N^{\ve,i,k}(B)\otimes_\Q \Q[a]\{sl(B)\}_a \xrightarrow{d_{\chi,FT}^{i}} H^{i+1}(T^{\ve,\star,\star,k}) \rightarrow \cdots.
\end{eqnarray}

Since $T^{\ve,i,\star,k}$ is a direct sum of finitely many components of the form $\Q[a]/(a)\{s\}_a$, so is $H^i(T^{\ve,\star,\star,k})$. From the exact sequence \eqref{eq-FT-exact-seq-T}, one can see that $T\fH_N^{\ve,i,\star,k}(B)$ is a quotient module of $ H^i(T^{\ve,\star,\star,k})$. Thus, $T\fH_N^{\ve,i,\star,k}(B)$ is also a direct sum of finitely many components of the form $\Q[a]/(a)\{s\}_a$. That is, 
\begin{equation}\label{eq-fH-module-tor}
T\fH_N^{\ve,i,\star,k}(B) \cong (\bigoplus_{q=1}^{n} \Q[a]/(a)\{s_q\}_a),
\end{equation}
for some finite sequence $\{s_1,\dots,s_n\}$ of integers.

Next we prove that the $\Q$-linear map 
\[
F\fH_N^{\ve,i,\star,k}(B)/(a-1)F\fH_N^{\ve,i,\star,k}(B) \xrightarrow{f} H_N^{\ve,i,k}(B)\otimes_\Q \Q[a]\{sl(B)\}_a /(a-1)H_N^{\ve,i,k}(B)\otimes_\Q \Q[a]\{sl(B)\}_a
\]
is an isomorphism. First, note that $H^{i+1}(T^{\ve,\star,\star,k})$ is a direct sum of components of the form $\Q[a]/(a)\{s\}_a$. So any multiple of $a$ in $H_N^{\ve,i,k}(B)\otimes_\Q \Q[a]\{sl(B)\}_a$ is in $\ker d_{\chi,FT}^{i} = \im f$. For any $u \in F\fH_N^{\ve,i,\star,k}(B)$ such that $f(u)=(a-1)v$ for some $v \in H_N^{\ve,i,k}(B)\otimes_\Q \Q[a]\{sl(B)\}_a$, there exits an $u'\in F\fH_N^{\ve,i,\star,k}(B)$ satisfying $f(u')=av$. Thus, 
\[
f(-(a-1)(u-u')) = -(a-1)(f(u)-f(u')) = (a-1)v = f(u).
\] 
But $F\fH_N^{\ve,i,\star,k}(B) \xrightarrow{f} H_N^{\ve,i,k}(B)\otimes_\Q \Q[a]\{sl(B)\}_a$ is injective. So $u=-(a-1)(u-u')$. This shows that the above $\Q$-linear map is injective. Second, for every $v \in H_N^{\ve,i,k}(B)\otimes_\Q \Q[a]\{sl(B)\}_a$, there is a $u \in F\fH_N^{\ve,i,\star,k}(B)$ such that $f(u)=av$. So $v=f(u) - (a-1)v$. This shows that the above $\Q$-linear map is surjective. Thus, it is an isomorphism. 

The above $\Q$-linear isomorphism implies that the rank of the $\zed$-graded free $\Q[a]$-module $F\fH_N^{\ve,i,\star,k}(B)$ is equal to $\dim_\Q H_N^{\ve,i,k}(B)$. Hence, by Lemma \ref{lemma-fg-graded-module-structure},
\begin{equation}\label{eq-fH-module-free-weak}
F\fH_N^{\ve,i,\star,k}(B) \cong \bigoplus_{p=1}^{\dim_\Q H_N^{\ve,i,k}(B)} \Q[a]\{t_p\}_a.
\end{equation}
From \cite{KR1}, we know that $H_N^{sl(B)-1,i,k}(B) \cong 0$ for any $i,k$. So, for any $i,k$,
\begin{equation}\label{eq-fH-module-free-s-1}
F\fH_N^{sl(B)-1,i,\star,k}(B) \cong 0.
\end{equation}
From the construction of $\fH_N(B)$, one can see that, when $\ve=sl(B)$, the parity of $t_p$ in \eqref{eq-fH-module-free-weak} must be the same as that of $sl(B)$. Since $F\fH_N^{sl(B),i,\star,k}(B) \xrightarrow{f} H_N^{sl(B),i,k}(B)\otimes_\Q \Q[a]\{sl(B)\}_a$ is injective and preserves the $a$-grading, we know that $t_p \geq sl(B)$ if $\ve=sl(B)$. Assume that $F\fH_N^{sl(B),i,\star,k}(B)$ contains a component $\Q[a]\{t_p\}_a$ such that $t_p\geq sl(B)+4$. Denote by $1_p$ the $1$ in $\Q[a]\{t_p\}_a$. Then $f(1_p)=a^2 v$ for some $v\in H_N^{sl(B),i,k}(B)\otimes_\Q \Q[a]\{sl(B)\}_a$. Consider the exact sequence \eqref{eq-FT-exact-seq-F}. Again, since $H^{i+1}(T^{sl(B),\star,\star,k})$ is a direct sum of finitely many components of the form $\Q[a]/(a)\{s\}_a$, one can see that $av \in \ker d_{\chi,FT}^{i} = \im f$. So there exists a $u \in F\fH_N^{sl(B),i,\star,k}(B)$ such that $f(u)=av$. Therefore, $f(1_p)=f(au)$. But $f$ is injective. This means $1_p =au$, which is a contradiction. Thus, when $\ve=sl(B)$, we have $t_p = sl(B)$ or $sl(B)+2$ for every $p$ and 
\begin{equation}\label{eq-fH-module-free}
F\fH_N^{sl(B),i,\star,k}(B) \cong (\Q[a]\{sl(B)\}_a)^{\oplus l} \oplus (\Q[a]\{sl(B)+2\}_a)^{\oplus (\dim_\Q H_N^{sl(B),i,k}(B) - l)}
\end{equation}
for some non-negative integer $l$.

By decompositions \eqref{eq-fH-module-tor}, \eqref{eq-fH-module-free-s-1} and \eqref{eq-fH-module-free}, one can see that $\fH_N^{\ve,i,\star,k}(B)$ admits a decomposition of the form given in Theorem \ref{thm-fH-module}. The uniqueness of this decomposition follows from Lemma \ref{lemma-fg-graded-module-structure}. The only things left to prove are the bounds for $s_q$. In the remainder of this proof, we show that the bound for $s_q$ in Theorem \ref{thm-fH-module} follow from the corresponding bounds in Lemma \ref{lemma-module-homology-resolved-braid}.

If we choose a resolution as in Figure \ref{crossing-res-fig} for each crossing of $B$, we get a closed resolved braid. We call such a closed resolved braid a resolution of $B$ and denote by $\mathcal{R}(B)$ the set of all resolutions of $B$. As suggested in Figure \ref{crossing-res-fig}, we call the resolution $C_\pm \leadsto \Gamma_0$ a $0$-resolution and $C_\pm \leadsto \Gamma_1$ a $\pm 1$-resolution. For $\overline{\mu} \in \mathcal{R}(B)$, assume it contains $m_{\overline{\mu},+}+m_{\overline{\mu},-}$ $2$-colored edges, where $m_{\overline{\mu},\pm}$ is the number of $2$-colored edges in $\overline{\mu}$ coming from $\pm 1$-resolutions. From the construction of $\fC_N(B)$, especially local chain complexes \eqref{eq-def-chain-crossing+} and \eqref{eq-def-chain-crossing-}, one can see that
\begin{equation}\label{eq-chain-mf-decomp}
\fC_N(B) = \bigoplus_{\overline{\mu} \in \mathcal{R}(B)} \fC_N(\overline{\mu}) \left\langle w \right\rangle \{w, (N-1)w + m_{\overline{\mu},+}-m_{\overline{\mu},-}\} \|m_{\overline{\mu},-}-m_{\overline{\mu},+}\|,
\end{equation}
where $w= c_+-c_-$ is the writhe of $B$ and ``$\|\ast\|$" means shifting the homological grading by $\ast$. From Lemma \ref{lemma-module-homology-resolved-braid}, we know that, if $\fH_N^{\ve,\star,k-(N-1)w -m_{\overline{\mu},+}+m_{\overline{\mu},-}}(\overline{\mu})\{w\}_a$ contains a torsion component $\Q[a]/(a)\{s\}_a$, then $w-b \leq s \leq w-1$ and $(N-1)s \leq k - 2N +2m_{\overline{\mu},-}$. Note that $w-b=sl(B)$ and $m_{\overline{\mu},-} \leq c_-$. So, by decomposition \eqref{eq-chain-mf-decomp}, we have that, if $T^{\ve,\star,\star,k}$ contains a component $\Q[a]/(a)\{s\}_a$, then 
\begin{equation}\label{eq-grading-bounds}
sl(B) \leq s \leq w-1 \text{ and } (N-1)s \leq k - 2N +2c_-.
\end{equation}
Therefore, if $H^i(T^{\ve,\star,\star,k})$ contains a component $\Q[a]/(a)\{s\}_a$, then $s$ satisfies the two bounds in \eqref{eq-grading-bounds}. Finally, by the exact sequence \eqref{eq-FT-exact-seq-T}, $T\fH_N^{\ve,i,\star,k}(B)$ is a quotient module of $ H^i(T^{\ve,\star,\star,k})$. So, if $\fH_N^{\ve,i,\star,k}(B)$ contains a component $\Q[a]/(a)\{s\}_a$, then $s$ satisfies the two bounds in \eqref{eq-grading-bounds}. This completes the proof of Theorem \ref{thm-fH-module}.
\end{proof}

\section{Stabilization}\label{sec-stabilization}

In this section, we study how $\fH_N$ changes under stabilization. The goal is to prove Theorem \ref{thm-stabilization}.

\subsection{Mapping cones} We now review some basic properties of mapping cones.

\begin{definition}\label{def-mapping-cone}
Let $A$, $B$ be two chain complexes of $\zed$-graded $\Q[a]$-modules and $f:A \rightarrow B$ a chain map preserving both the homological grading and the $a$-grading. Then the mapping cone $cone(f)$ is defined to be the chain complex given by:
\begin{itemize}
	\item $cone^i(f) = \left.%
\begin{array}{c}
  A^i \\
  \oplus \\
  B^{i-1}
\end{array}%
\right.$,
\item the differential $cone^i(f) \xrightarrow{d} cone^{i+1}(f)$ is the map
$\left.%
\begin{array}{c}
  A^i \\
  \oplus \\
  B^{i-1}
\end{array}%
\right.
\xrightarrow{\left(%
\begin{array}{cc}
  d_A & 0 \\
  f & d_B
\end{array}%
\right)}
\left.%
\begin{array}{c}
  A^{i+1} \\
  \oplus \\
  B^{i}
\end{array}%
\right.$, where $d_A$ and $d_B$ are the differential maps of $A$ and $B$.
\end{itemize}
\end{definition}

\begin{lemma}\label{lemma-cone-1}
Suppose that $0 \rightarrow A \xrightarrow{f} B \xrightarrow{g} C \rightarrow 0$ is a short exact sequence of chain complexes of $\zed$-graded $\Q[a]$-modules, where $f$ and $g$ preserve both the homological grading and the $a$-grading. Then, as $\zed$-graded $\Q[a]$-modules, $H^i(cone(f)) \cong H^{i-1}(C)$ and $H^i(cone(g)) \cong H^i(A)$.
\end{lemma}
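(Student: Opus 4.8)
The plan is to recognize this as the standard homological-algebra fact that the mapping cone of a map fits into a long exact sequence together with the source and target, and then to exploit the hypothesis that $f$ (resp.\ $g$) is part of a short exact sequence so that the relevant connecting maps become isomorphisms. Everything should be carried out at the level of $\zed$-graded $\Q[a]$-modules, checking only that all maps in sight preserve the gradings so that the abstract isomorphisms are in fact isomorphisms of $\zed$-graded $\Q[a]$-modules.

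First I would record the two standard long exact sequences attached to a mapping cone. For a chain map $f:A\to B$ one has the short exact sequence of complexes $0\to B\|1\|\to cone(f)\to A\to 0$ (inclusion of $B$ in the lower slot, projection onto the upper slot), which yields a long exact sequence
\[
\cdots \to H^{i-1}(B) \to H^i(cone(f)) \to H^i(A) \xrightarrow{f_*} H^i(B) \to \cdots,
\]
in which the connecting homomorphism is exactly $f_*$ (this is the usual identification; one checks it on the nose from the definition of the cone differential $\left(\begin{smallmatrix} d_A & 0 \\ f & d_B\end{smallmatrix}\right)$). All maps here preserve the homological and $a$-gradings because $f$, $d_A$, $d_B$ do, so this is a long exact sequence of $\zed$-graded $\Q[a]$-modules.

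Next, for the first claim, I would use that $0\to A\xrightarrow{f} B\xrightarrow{g} C\to 0$ is exact, hence $f$ is injective with cokernel $C$; therefore $cone(f)$ is quasi-isomorphic to $C$ via the natural projection (this is the classical statement that the cone of a monomorphism computes the cokernel). Concretely, the map $g$ induces a chain map $cone(f)\to C$ given by $(a^i,b^{i-1})\mapsto g(b^{i-1})$ up to a sign convention, whose kernel is the acyclic complex $cone(\id_A)$ sitting inside $cone(f)$; since this subcomplex is acyclic, the induced map on homology is an isomorphism. Thus $H^i(cone(f))\cong H^{i-1}(C)$, where the shift by one in homological degree comes from the fact that the copy of $C$ lives in the ``$B^{i-1}$'' slot of $cone^i(f)$. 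Because $f$ and $g$ preserve the $a$-grading, all of this respects the $\zed$-grading and the $\Q[a]$-module structure.

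Finally, for the second claim I would argue dually: $g$ is surjective with kernel $A$, so $cone(g)$ is quasi-isomorphic to $A$ shifted back by one homological degree, i.e.\ $H^i(cone(g))\cong H^i(A)$. One clean way to see this is to use the long exact sequence $\cdots\to H^{i-1}(C)\to H^i(cone(g))\to H^i(B)\xrightarrow{g_*} H^i(C)\to\cdots$ together with the long exact sequence of the original short exact sequence $\cdots\to H^{i-1}(C)\xrightarrow{\partial} H^i(A)\to H^i(B)\xrightarrow{g_*} H^i(C)\to\cdots$; comparing the two (the connecting map of the cone sequence is $g_*$, and the two sequences agree outside the cone/$A$ terms) forces $H^i(cone(g))\cong H^i(A)$ by the five lemma, or more directly by identifying $cone(g)$ with $A$ up to the acyclic complex $cone(\id_B)$. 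I do not expect any genuine obstacle here; the only point requiring a little care is bookkeeping the homological-degree shift in the definition of $cone$ used in this paper (the lower slot is $B^{i-1}$, so ``$cone$ of a mono $=$ cokernel shifted up by one'') and confirming that every comparison map is $\Q[a]$-linear and grading-preserving, which is immediate from the hypotheses on $f$ and $g$.
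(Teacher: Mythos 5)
Your argument is correct and follows the same underlying idea as the paper's proof: for the first isomorphism you exhibit the short exact sequence $0 \to cone(\id_A) \to cone(f) \to C\|1\| \to 0$ (with $cone(\id_A)$ acyclic), which is precisely the paper's $\alpha$, $\beta$ construction, and for the second you observe that $cone(g)$ differs from $A$ by an acyclic cone of an identity map. The only minor divergence is that the paper realizes the second claim via the short exact sequence $0 \to A \xrightarrow{\phi} cone(g) \xrightarrow{\psi} cone(\id_C) \to 0$, whereas you suggest either a five-lemma comparison of long exact sequences or an embedding of $A$ (shifted) into $cone(\id_B)$ with quotient $cone(g)$; all three routes are standard and equivalent, and your bookkeeping of the homological shifts and the grading/$\Q[a]$-linearity is sound, aside from the slightly inconsistent phrase ``$cone(g)$ is quasi-isomorphic to $A$ shifted back by one homological degree, i.e.\ $H^i(cone(g))\cong H^i(A)$'' (the second clause, which is what you actually prove, is the correct one; with this paper's convention $cone^i(g)=B^i\oplus C^{i-1}$ there is in fact no residual shift).
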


\begin{proof}
Denote by $\id_A$ the identity map from $A$ to itself. Define $\alpha:cone(\id_A)\rightarrow cone(f)$ by $\left.%
\begin{array}{c}
  A^i \\
  \oplus \\
  A^{i-1}
\end{array}%
\right.
\xrightarrow{\left(%
\begin{array}{cc}
  \id_A & 0 \\
  0 & f
\end{array}%
\right)}
\left.%
\begin{array}{c}
  A^{i} \\
  \oplus \\
  B^{i-1}
\end{array}%
\right.$
and $\beta:cone(f) \rightarrow C\|1\|$ by 
$\left.%
\begin{array}{c}
  A^{i} \\
  \oplus \\
  B^{i-1}
\end{array}%
\right.
\xrightarrow{(0,g)}
C^{i-1}$.
Then $\alpha$, $\beta$ are chain maps and 
\[
0 \rightarrow cone(\id_A) \xrightarrow{\alpha} cone(f) \xrightarrow{\beta} C\|1\| \rightarrow 0
\]
is a short exact sequence. It induces a long exact sequence
\[
\cdots \rightarrow H^i(cone(\id_A)) \rightarrow H^i(cone(f)) \rightarrow H^{i-1}(C) \rightarrow H^{i+1}(cone(\id_A)) \rightarrow \cdots
\]
Since $H(cone(\id_A)) \cong 0$. This long exact sequence implies that $H^i(cone(f)) \cong H^{i-1}(C)$.

Now define $\phi:A\rightarrow cone(g)$ by 
$A^i \xrightarrow{\left(%
\begin{array}{c}
  f \\
  0
\end{array}%
\right)}
\left.%
\begin{array}{c}
  B^{i} \\
  \oplus \\
  C^{i-1}
\end{array}%
\right.$
and $\psi: cone(g) \rightarrow cone(\id_C)$ by 
$\left.%
\begin{array}{c}
  B^i \\
  \oplus \\
  C^{i-1}
\end{array}%
\right.
\xrightarrow{\left(%
\begin{array}{cc}
  g & 0 \\
  0 & \id_C
\end{array}%
\right)}
\left.%
\begin{array}{c}
  C^{i} \\
  \oplus \\
  C^{i-1}
\end{array}%
\right.$.
Then $\phi$, $\psi$ are chain maps and
\[
0 \rightarrow A \xrightarrow{\phi} cone(g) \xrightarrow{\psi} cone(\id_C) \rightarrow 0
\]
is a short exact sequence. It induces a long exact sequence 
\[
\cdots \rightarrow H^{i-1}(cone(\id_C)) \rightarrow H^i(A) \rightarrow H^i(cone(g)) \rightarrow H^{i}(cone(\id_C)) \rightarrow \cdots
\]
Since $H(cone(\id_C)) \cong 0$, this long exact sequence implies that $H^i(cone(g)) \cong H^i(A)$.
\end{proof}

\begin{lemma}\label{lemma-cone-2}
Suppose that $0 \rightarrow A \xrightarrow{f} B \xrightarrow{g} C \xrightarrow{h} D \rightarrow 0$ is an exact sequence of chain complexes of $\zed$-graded $\Q[a]$-modules, where $f$, $g$ and $h$ preserve both the homological grading and the $a$-grading. Then there is a long exact sequence of $\zed$-graded $\Q[a]$-modules
	\[
	\cdots \rightarrow H^i(A) \rightarrow H^i(cone(g)) \rightarrow H^{i-1}(D) \rightarrow H^{i+1}(A) \rightarrow \cdots
	\]
\end{lemma}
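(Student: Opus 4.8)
The plan is to decompose the four-term exact sequence into two short exact sequences and then produce a short exact sequence of mapping cones to which Lemma~\ref{lemma-cone-1} applies.

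First I would set $E := \im(g) = \ker(h)$, which is a subcomplex of $C$ (equivalently, a quotient complex of $B$) and hence a chain complex of $\zed$-graded $\Q[a]$-modules whose differential preserves both the homological grading and the $a$-grading. Write $\bar g\colon B \to E$ for the corestriction of $g$ and $j\colon E \hookrightarrow C$ for the inclusion, so that $g = j \circ \bar g$, the map $\bar g$ is surjective with kernel $f(A) \cong A$, and
\[
0 \to A \xrightarrow{f} B \xrightarrow{\bar g} E \to 0, \qquad 0 \to E \xrightarrow{j} C \xrightarrow{h} D \to 0
\]
are short exact sequences of chain complexes whose maps preserve both gradings.

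Next I would construct the short exact sequence of chain complexes
\[
0 \to cone(\bar g) \xrightarrow{\alpha} cone(g) \xrightarrow{\beta} D\|1\| \to 0,
\]
where, in homological degree $i$ (so that $cone(\bar g)^i = B^i \oplus E^{i-1}$, $cone(g)^i = B^i \oplus C^{i-1}$ and $(D\|1\|)^i = D^{i-1}$), the maps are $\alpha(b,e) = (b, j(e))$ and $\beta(b,c) = h(c)$. A direct check --- using $j \circ d_E = d_C \circ j$, $h \circ d_C = d_D \circ h$, $h \circ g = 0$, and keeping track of the sign conventions for the homological shift $\|1\|$ --- shows that $\alpha$ and $\beta$ are chain maps preserving the homological and $a$-gradings. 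Degreewise, $\alpha$ is injective because $j$ is, $\beta$ is surjective because $h$ is, and $\ker \beta^i = \{(b,c) : h(c) = 0\} = B^i \oplus j(E^{i-1}) = \im \alpha^i$ because $\ker h = \im j$; hence the sequence is exact. The long exact homology sequence of this short exact sequence is a long exact sequence of $\zed$-graded $\Q[a]$-modules
\[
\cdots \to H^i(cone(\bar g)) \to H^i(cone(g)) \to H^i(D\|1\|) \to H^{i+1}(cone(\bar g)) \to \cdots .
\]
Now $H^i(D\|1\|) = H^{i-1}(D)$, and applying Lemma~\ref{lemma-cone-1} to $0 \to A \xrightarrow{f} B \xrightarrow{\bar g} E \to 0$ gives $H^i(cone(\bar g)) \cong H^i(A)$ as $\zed$-graded $\Q[a]$-modules. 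Substituting these identifications yields the desired long exact sequence $\cdots \to H^i(A) \to H^i(cone(g)) \to H^{i-1}(D) \to H^{i+1}(A) \to \cdots$.

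The main obstacle is picking the correct intermediate object. One is tempted to relate $cone(g)$ to $cone(\bar g)$ and $cone(j)$ via an octahedral-type argument, but the naive maps among these three cones do not assemble into a short exact sequence of complexes (and in general their degreewise ranks are incompatible). The key observation that makes the argument clean is that the cokernel of $\alpha\colon cone(\bar g)\to cone(g)$ is simply the shift $D\|1\|$; once this is in hand, the rest is routine bookkeeping with degree shifts and the paper's sign conventions, together with one invocation of Lemma~\ref{lemma-cone-1}.
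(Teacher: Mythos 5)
Your proof is correct and takes essentially the same route as the paper's: both construct a short exact sequence of complexes $0 \rightarrow cone(\,\cdot\,) \rightarrow cone(g) \rightarrow D\|1\| \rightarrow 0$ whose left-hand cone has homology $H^i(A)$ by Lemma \ref{lemma-cone-1}, and then read off the long exact sequence. The only cosmetic difference is that you take the intermediate object to be $E = \im(g)$ with corestriction $\bar g\colon B\to E$, whereas the paper uses the quotient $B/f(A)$ with the projection $\pi\colon B\to B/f(A)$; since $\ker g = \im f$, these are canonically isomorphic and the two constructions coincide under that identification.
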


\begin{proof}
Denote by $\pi:B \rightarrow B/f(A)$ the standard quotient map. Define $\alpha: cone(\pi) \rightarrow cone(g)$ by $\left.%
\begin{array}{c}
  B^i \\
  \oplus \\
  B^{i-1}/f(A^{i-1})
\end{array}%
\right.
\xrightarrow{\left(%
\begin{array}{cc}
 \id_B & 0 \\
  0 & g
\end{array}%
\right)}
\left.%
\begin{array}{c}
  B^{i} \\
  \oplus \\
  C^{i-1}
\end{array}%
\right.$, which is well defined since $\ker g =\im f$. Also, define $\beta:cone(g) \rightarrow D\|1\|$ by $\left.%
\begin{array}{c}
  B^i \\
  \oplus \\
  C^{i-1}
\end{array}%
\right.
\xrightarrow{(0,h)}
D^{i-1}$. Then $\alpha$, $\beta$ are chain maps and 
\[
0 \rightarrow cone(\pi) \xrightarrow{\alpha} cone(g) \xrightarrow{\beta} D\|1\| \rightarrow 0
\]
is a short exact sequence. It induces a long exact sequence
\[
\cdots \rightarrow H^i(cone(\pi)) \rightarrow H^i(cone(g)) \rightarrow H^{i-1}(D) \rightarrow H^{i+1}(cone(\pi)) \rightarrow \cdots
\]
But $0 \rightarrow A \xrightarrow{f} B \xrightarrow{\pi} B/f(A) \rightarrow 0$ is a short exact sequence of complexes. So, by Lemma \ref{lemma-cone-1}, we know that $H^i(cone(\pi)) \cong H^i (A)$. Thus, we have a long exact sequence
	\[
	\cdots \rightarrow H^i(A) \rightarrow H^i(cone(g)) \rightarrow H^{i-1}(D) \rightarrow H^{i+1}(A) \rightarrow \cdots
	\]
\end{proof}

\subsection{Stabilization and $\fH_N$} Next, we prove Theorem \ref{thm-stabilization}.

\begin{proof}[Proof of Theorem \ref{thm-stabilization}]
Let $B$ be a closed braid. Set $\mathscr{C}_N(B) = \fC_N(B)/a\fC_N(B)$. Recall that $\pi_0$ is the standard quotient map $\fC_N(B) \xrightarrow{\pi_0} \fC_N(B)/a\fC_N(B) = \mathscr{C}_N(B)$. Then there is a short exact sequence
\[
0 \rightarrow \fC_N(B) \xrightarrow{a} \fC_N(B)\{-2,0\} \xrightarrow{\pi_0} \mathscr{C}_N(B) \{-2,0\} \rightarrow 0.
\]
Note that $d_{mf}$ is homogeneous with $\zed_2$-degree $1$, homological degree $0$, $a$-degree $1$ and $x$-degree $N+1$. Set $s=sl(B)$. Taking the homology with respect to $d_{mf}$, the above short exact sequence gives the following long exact sequence.
{\small
\[
\xymatrix{
&& \cdots\ar[lld]  \\
H^{s-1,\star,\star,k-N-1}(\fC_N(B), d_{mf})\{1\}_a \ar[r]^a & H^{s-1,\star,\star,k-N-1}(\fC_N(B), d_{mf})\{-1\}_a \ar[r]^{\pi_0} & H^{s-1,\star,\star,k-N-1}(\mathscr{C}_N(B), d_{mf})\{-1\}_a \ar[lld] \\
H^{s,\star,\star,k}(\fC_N(B), d_{mf})\ar[r]^a & H^{s,\star,\star,k}(\fC_N(B), d_{mf})\{-2\}_a \ar[r]^{\pi_0} & H^{s,\star,\star,k}(\mathscr{C}_N(B), d_{mf})\{-2\}_a \ar[lld] \\
H^{s-1,\star,\star,k+N+1}(\fC_N(B), d_{mf})\{-1\}_a \ar[r]^a & H^{s-1,\star,\star,k+N+1}(\fC_N(B), d_{mf})\{-3\}_a \ar[r] & \cdots
}
\]
}

Following the notations in Subsection \ref{subsec-homology-braid}, we denote by $F^{\ve,i,\star,k}$ the free part of $H^{\ve,i,\star,k}(\fC_N(B),d_{mf})$ and by $T^{\ve,i,\star,k}$ the torsion part of $H^{\ve,i,\star,k}(\fC_N(B),d_{mf})$. By Corollary \ref{corollary-MOY-homology-vanish} and the normalization of the local chain complexes \eqref{eq-def-chain-crossing+} and \eqref{eq-def-chain-crossing-}, we know that $F^{s-1,i,\star,k} \cong 0$ and $T^{\ve,i,\star,k}$ is a direct sum of components of the form $\Q[a]/(a)\{\ast\}_a$. So the above long exact sequence breaks into two exact sequences:
\begin{equation}\label{eq-exact-seq-s-1}
0\rightarrow H^{s-1,\star,\star,k-N-1}(\fC_N(B), d_{mf})\{-1\}_a \xrightarrow{\pi_0} H^{s-1,\star,\star,k-N-1}(\mathscr{C}_N(B), d_{mf})\{-1\}_a \rightarrow T^{\ve,\star,\star,k} \rightarrow 0
\end{equation}
and 
{\small
\begin{equation}\label{eq-exact-seq-s}
0\rightarrow F^{s,\star,\star,k} \rightarrow H^{s,\star,\star,k}(\fC_N(B), d_{mf})\{-2\}_a \xrightarrow{\pi_0} H^{s,\star,\star,k}(\mathscr{C}_N(B), d_{mf})\{-2\}_a \rightarrow H^{s-1,\star,\star,k+N+1}(\fC_N(B), d_{mf})\{-1\}_a \rightarrow 0.
\end{equation}
}

Applying Lemma \ref{lemma-cone-1} to the exact sequence \eqref{eq-exact-seq-s-1}, we get that 
\[
H^{s-1,i,\star,k}(cone(H(\fC_N(B),d_{mf})\xrightarrow{\pi_0}H(\mathscr{C}_N(B),d_{mf})),d_\chi)\{-1\}_a \cong H^{i-1}(T^{s,\star,\star,k+N+1},d_\chi).
\]
By \cite[Theorem 1.5]{Wu-triple-trans}, 
\[
\fH_N^{s-1,i,\star,k}(B_-) \cong H^{s-1,i,\star,k}(cone(H(\fC_N(B),d_{mf})\xrightarrow{\pi_0}H(\mathscr{C}_N(B),d_{mf})),d_\chi)\{-2\}_a.
\]
So
\[
\fH_N^{s-1,i,\star,k}(B_-) \cong H^{i-1}(T^{s,\star,\star,k+N+1},d_\chi)\{-1\}_a.
\]
By Lemmas \ref{lemma-FT-exact-seq} and \ref{lemma-free-homology}, there is a
long exact sequence
\[
\cdots \rightarrow H^i(T^{\ve,\star,\star,k}) \rightarrow \fH_N^{\ve,i,\star,k}(B) \rightarrow H_N^{\ve,i,k}(B)\otimes_\Q \Q[a]\{s\}_a \rightarrow H^{i+1}(T^{\ve,\star,\star,k}) \rightarrow \cdots
\]
Thus, we have a long exact sequence
\[
\cdots \rightarrow \fH_N^{s-1,i,\star,k}(B_-) \rightarrow \fH_N^{s,i-1,\star,k+N+1}(B)\{-1\}_a \rightarrow H_N^{s,i-1,k+N+1}(B)\otimes_\Q \Q[a]\{s-1\}_a \rightarrow \fH_N^{s-1,i+1,\star,k}(B_-) \rightarrow \cdots
\]
This establishes the long exact sequence \eqref{eq-exact-seq-stabilization-s-1}.

Now apply Lemma \ref{lemma-cone-2} to the exact sequence \eqref{eq-exact-seq-s}. Using also the fact that 
\[
\fH_N^{s,i,\star,k}(B_-) \cong H^{s,i,\star,k}(cone(H(\fC_N(B),d_{mf})\xrightarrow{\pi_0}H(\mathscr{C}_N(B),d_{mf})),d_\chi)\{-2\}_a,
\]
we get a long exact sequence
\[
\cdots \rightarrow H^i(F^{s,\star,\star,k}) \rightarrow \fH_N^{s,i,\star,k}(B_-) \rightarrow \fH_N^{s-1,i-1,\star,k+N+1}(B)\{-1\}_a \rightarrow H^{i+1}(F^{s,\star,\star,k}) \rightarrow \cdots
\]
By Lemma \ref{lemma-free-homology}, $H^i(F^{s,\star,\star,k}) \cong H_N^{s,i,k}(B)\otimes_\Q \Q[a]\{s\}_a$, which is a free $\Q[a]$-module. From \cite{KR1}, we know that $H_N^{s-1,\star,\star}(B) \cong 0$. So, by Theorem \ref{thm-fH-module}, $\fH_N^{s-1,i-1,\star,k+N+1}(B)$ is a torsion $\Q[a]$-module. Thus, the above long exact sequence breaks into the following short exact sequence.
\[
0 \rightarrow H_N^{s,i,k}(B)\otimes_\Q \Q[a]\{s\}_a \rightarrow \fH_N^{s,i,\star,k}(B_-) \rightarrow \fH_N^{s-1,i-1,\star,k+N+1}(B)\{-1\}_a \rightarrow 0.
\]
This establishes the short exact sequence \eqref{eq-exact-seq-stabilization-s}.
\end{proof}

\subsection{Transverse unknots} We are now ready to prove Corollary \ref{cor-unknots}. We start by a simple algebraic observation. 

\begin{lemma}\label{lemma-quotient}
Let $\mathcal{F} = \bigoplus_{l=0}^{N-1}\Q[a]\left\langle 1\right\rangle \{-1, -N+1+2l\}$ be as defined in Lemma \ref{cor-unknots}. 
\begin{enumerate}
	\item Assume $f:\mathcal{F} \rightarrow \mathcal{F}$ is an injective homogeneous homomorphism of $a$-degree $2$ and preserving other gradings. Then $\coker f \cong \mathcal{F}/a\mathcal{F}$.
	\item Assume $g:\mathcal{F} \rightarrow \mathcal{F}$ is an injective homogeneous homomorphism preserving all gradings. Then $g$ is an isomorphism.
\end{enumerate}
\end{lemma}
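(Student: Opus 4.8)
The plan is to exploit that $\mathcal{F}$ is a \emph{free} $\Q[a]$-module equipped with a homogeneous basis whose elements have pairwise distinct $x$-degrees; this rigidity forces every grading-preserving endomorphism of $\mathcal{F}$ to be ``diagonal'', after which both statements are immediate.

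First I would set notation: write $\mathcal{F} = \bigoplus_{l=0}^{N-1}\Q[a]\,e_l$, where $e_l$ is the generator of the $l$-th summand $\Q[a]\langle 1\rangle\{-1,-N+1+2l\}$, so that $e_l$ is homogeneous of $\zed_2$-degree $1$, homological degree $0$, $a$-degree $-1$ and $x$-degree $-N+1+2l$. Since $\deg_x a = 0$ and the values $-N+1, -N+3,\dots,N-1$ are pairwise distinct, the homogeneous component of $\mathcal{F}$ of $x$-degree $d$ equals $\Q[a]\,e_l$ when $d=-N+1+2l$ for some $l\in\{0,\dots,N-1\}$ and is $0$ otherwise. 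Hence any $\Q[a]$-linear endomorphism of $\mathcal{F}$ preserving the $x$-grading sends $e_l$ into $\Q[a]\,e_l$, i.e.\ is given by $e_l\mapsto p_l\,e_l$ with each $p_l$ a homogeneous element of $\Q[a]$, so $p_l = c_l a^{k_l}$ for some $c_l\in\Q$, $k_l\geq 0$; the homogeneity of the endomorphism in the $a$-grading then pins down $k_l$.

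For part (1), $a$-degree $2$ gives $\deg_a(p_l e_l) = \deg_a e_l + 2 = 1$, so $k_l=1$ and $p_l = c_l a$; injectivity forces every $c_l\neq 0$, whence $\coker f = \bigoplus_{l=0}^{N-1}\Q[a]\,e_l/(c_l a)\Q[a]\,e_l = \bigoplus_{l=0}^{N-1}\Q[a]\,e_l/a\Q[a]\,e_l$ because $(c_l a)=(a)$, and the latter is $\mathcal{F}/a\mathcal{F}$ as a $\zed_2\oplus\zed^{\oplus 3}$-graded $\Q[a]$-module since the classes $e_l$ carry the same four gradings on both sides. For part (2), preservation of all gradings gives $\deg_a p_l = \deg_a e_l - \deg_a e_l = 0$, so $k_l=0$ and $p_l=c_l\in\Q$; injectivity forces $c_l\neq 0$, and then the $\Q[a]$-linear map $e_l\mapsto c_l^{-1}e_l$ is a two-sided inverse of $g$, so $g$ is an isomorphism. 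There is no real obstacle here: the only point needing care is the bookkeeping that distinctness of the generators' $x$-degrees is exactly what forces the diagonal form, and that the cokernel isomorphism in part (1) respects all four gradings and not merely the underlying $\Q[a]$-module structure — both follow directly from the degree-wise description of $\mathcal{F}$ recorded above.
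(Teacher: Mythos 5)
Your proof is correct and follows essentially the same route as the paper: both rest on the observation that the $N$ generators $e_l$ (the paper calls them $1_l$) have pairwise distinct $x$-degrees, so any $x$-grading-preserving $\Q[a]$-endomorphism of $\mathcal{F}$ is diagonal, and then the $a$-degree condition pins down the scalar in each slot. The paper states this conclusion tersely and leaves Part~(2) to the reader; you have simply spelled out the bookkeeping.
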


\begin{proof}
The proofs for the two parts are very similar. We only include here the proof for Part (1) and leave Part (2) for the reader. 

Denote by $1_l$ the ``$1$" in $\Q[a]\left\langle 1\right\rangle \{-1, -N+1+2l\}$. Then, since $f$ is an injective homogeneous homomorphism of $a$-degree $2$ and preserves the $x$-grading, we know that $f(1_l) = \lambda_l a 1_l$ for some $\lambda_l \in \Q \setminus \{0\}$. The lemma follows from this.
\end{proof}

\begin{proof}[Proof of Corollary \ref{cor-unknots}]
Setting $b=1$ in Lemma \ref{lemma-homology-empty-braid}, we get that $\fH_N(U_0) \cong \mathcal{F} \oplus \mathcal{T}$. 

For $m=1$, the exact sequences in Theorem \ref{thm-stabilization} are non-vanishing at only two locations:
\begin{equation}
\label{eq-exact-U-1-0}  0 \rightarrow \fH_N^{0,1,\star,\star}(U_1) \rightarrow \fH_N^{1,0,\star,\star}(U_0)\{-1, -N-1\} \rightarrow H_N^{1,0,\star}(U_0)\otimes_\Q \Q[a]\{-2,-N-1\} \rightarrow \fH_N^{0,2,\star,k}(U_1) \rightarrow 0,
\end{equation}
\begin{equation}\label{eq-exact-U-1-1} 
0 \rightarrow H_N^{1,0,\star}(U_0)\otimes_\Q \Q[a]\{-1\}_a \rightarrow \fH_N^{1,0,\star,\star}(U_1) \rightarrow 0. 
\end{equation}
Recall that, from \cite{KR1}, we know that $H_N(U_m)\cong H_N(U_0)\cong \bigoplus_{l=0}^{N-1}\Q\left\langle 1\right\rangle \{-N+1+2l\}_x$. So 
\begin{equation}\label{eq-unknot-KR}
H_N(U_m)\otimes_\Q \Q[a] \cong H_N(U_0)\otimes_\Q \Q[a] \cong \bigoplus_{l=0}^{N-1}\Q[a]\left\langle 1\right\rangle \{0, -N+1+2l\} \cong \mathcal{F} \{1\}_a. 
\end{equation}
Also, by Remark \ref{remark-torsion-s-1}, $ \fH_N^{0,1,\star,\star}(U_1)$ is a torsion $\Q[a]$-module. So exact sequence \eqref{eq-exact-U-1-0} breaks into 
\begin{eqnarray}
\label{eq-exact-U-1-2} & 0 \rightarrow \fH_N^{0,1,\star,\star}(U_1) \rightarrow \mathcal{T}\{-1, -N-1\} \rightarrow 0, & \\
\label{eq-exact-U-1-3} & 0 \rightarrow \mathcal{F}\{-1, -N-1\} \rightarrow \mathcal{F}\{-1,-N-1\} \rightarrow \fH_N^{0,2,\star,k}(U_1) \rightarrow 0. &
\end{eqnarray}
Thus, we have $\fH_N^{0,1,\star,\star}(U_1) \cong \mathcal{T}\{-1, -N-1\}$ and, by Part (2) of Lemma \ref{lemma-quotient}, $\fH_N^{0,2,\star,k}(U_1) \cong 0$. Also, using exact sequence \eqref{eq-exact-U-1-1}, we have $\fH_N^{1,0,\star,\star}(U_1) \cong \mathcal{F}$. Putting everything together, we have $\fH_N(U_1) \cong \mathcal{F} \oplus \mathcal{T} \left\langle 1 \right\rangle \{-1, -N-1\}\|1\|$

Next, assume the corollary is true for $U_m$ for some $m\geq 1$. We prove that the corollary is true for $U_{m+1}$. By \eqref{eq-unknot-KR}, $H_N^{\ve,i,\star}(U_m)\otimes_\Q \Q[a] \cong \begin{cases}
\mathcal{F} \{1\}_a & \text{if } \ve=1 \text{ and } i=0, \\
0 & \text{otherwise.} 
\end{cases}$ 
So the exact sequences in Theorem \ref{thm-stabilization} break into 
\begin{equation}\label{eq-exact-U-m-0}
0 \rightarrow \mathcal{F} \{-2m\}_a \rightarrow \fH_N^{1,0,\star,\star}(U_{m+1}) \rightarrow 0
\end{equation}
\begin{equation}\label{eq-exact-U-m-1}
0 \rightarrow \fH_N^{0,1,\star,\star}(U_{m+1}) \rightarrow \mathcal{F}\{-2m+1, -N-1\} \rightarrow \mathcal{F}\{-2m-1,-N-1\} \rightarrow \fH_N^{0,2,\star,\star}(U_{m+1}) \rightarrow 0,
\end{equation}
\begin{equation}\label{eq-exact-U-m-2}
0 \rightarrow \fH_N^{l+1,l+2,\star,\star}(U_{m+1}) \rightarrow \mathcal{F}/a\mathcal{F}  \{-2m+l-1, -(l+1)(N+1)\} \rightarrow 0, \text{ for } l=1,\dots,m-1,
\end{equation}
\begin{equation}\label{eq-exact-U-m-3}
 0 \rightarrow  \fH_N^{m-1,m+1,\star,\star}(U_{m+1})  \rightarrow \mathcal{T} \{-m-1, -(m+1)(N+1)\} \rightarrow 0.
\end{equation}
Exactness of \eqref{eq-exact-U-m-0} gives us 
\[
\fH_N^{1,0,\star,\star}(U_{m+1}) \cong \mathcal{F} \{-2m\}_a. 
\]
Exactness of \eqref{eq-exact-U-m-2} and \eqref{eq-exact-U-m-3} give us 
\begin{eqnarray*}
\fH_N^{l+1,l+2,\star,\star}(U_{m+1}) & \cong & \mathcal{F}/a\mathcal{F} \{-2m+l-1, -(l+1)(N+1)\}, \\
\fH_N^{m-1,m+1,\star,\star}(U_{m+1}) & \cong & \mathcal{T} \{-m-1, -(m+1)(N+1)\}.
\end{eqnarray*}
Finally, we look at exact sequence \eqref{eq-exact-U-m-1}. By Remark \ref{remark-torsion-s-1}, $\fH_N^{0,1,\star,\star}(U_{m+1})$ is a torsion $\Q[a]$-module. This implies that $\fH_N^{0,1,\star,\star}(U_{m+1}) \cong 0$ and we have a short exact sequence
\[
0 \rightarrow \mathcal{F}\{-2m+1, -N-1\} \rightarrow \mathcal{F}\{-2m-1,-N-1\} \rightarrow \fH_N^{0,2,\star,\star}(U_{m+1}) \rightarrow 0.
\]
Applying Part (1) of Lemma \ref{lemma-quotient} to the above short exact sequence, we get 
\[
\fH_N^{0,2,\star,\star}(U_{m+1}) \cong \mathcal{F}/a \mathcal{F} \{-2m-1,-N-1\}. 
\]
Now putting everything together, we have that 
\begin{eqnarray*}
\fH_N(U_{m+1}) & \cong & \mathcal{F} \{-2((m+1)-1),0\} \oplus \mathcal{T} \left\langle m+1 \right\rangle \{-(m+1), -(m+1)(N+1)\}\|m+1\| \\ 
&& \oplus \bigoplus_{l=1}^{(m+1)-1} \mathcal{F}/a\mathcal{F} \left\langle l \right\rangle \{-2(m+1)+l, -l(N+1)\}\|l+1\|.
\end{eqnarray*}
This shows that the corollary is true for $U_{m+1}$ too.
\end{proof}

\end{document}